\title{Entropic regularisation of non-gradient systems}
\author{
\normalsize Daniel Adams\textit{$^{a}$}
       \and 
\normalsize Manh Hong Duong\textit{$^{b}$}
\and 
\normalsize Gon\c calo dos Reis\textit{$^{c,d}$}}
\date{
    \footnotesize 
    $^{a}$~Maxwell Institute for Mathematical Sciences, School of Mathematics, University of Edinburgh, Edinburgh EH9 3FD, UK. Email: d.t.s.adams@sms.ed.ac.uk
    \\
    $^{b}$~School of Mathematics, University of Birmingham, Birmingham B15 2TT, UK. Email: h.duong@bham.ac.uk
    \\
    $^{c}$~School of Mathematics, University of Edinburgh, The King's Buildings, Edinburgh, UK.
    \\
    $^{d}$~Centro de Matem\'atica e Aplica\c c$\tilde{\text{o}}$es (CMA), FCT, UNL, Portugal. Email: G.dosReis@ed.ac.uk
    \\[2ex]
        \longdate \today \ (\currenttime)
    \vspace{-0.8cm}
}
\def\div{\mathop{\mathrm{div}}\nolimits}
\theoremstyle{definition}
\newtheorem{theorem}{Theorem}[section]
\newtheorem{lemma}[theorem]{Lemma}
\newtheorem{corollary}[theorem]{Corollary}
\newtheorem{defn}[theorem]{Definition}
\newtheorem{prop}[theorem]{Proposition}
\newtheorem{assumption}[theorem]{Assumption}
\theoremstyle{remark}
\newtheorem{rem}[theorem]{Remark}
\numberwithin{equation}{section}
\numberwithin{figure}{section}
\newcommand{\gbox}[1]{\fbox{$\triangleright$\textcolor{blue}{\textbf{Gon}:} #1}}
\newcommand{\divv}{\text{div}}
\newcommand{\dett}{\text{det}}
\newcommand{\argmin}{\text{argmin}}
\newcommand{\BV}{\text{BV}}
\newcommand{\bR}{\mathbb{R}}
\newcommand{\bN}{\mathbb{N}}
\newcommand{\x}{\mathbf{x}}
\newcommand{\y}{\mathbf{y}}
\newcommand{\cF}{\mathcal{F}}
\newcommand{\cG}{\mathcal{G}}
\newcommand{\cL}{\mathcal{L}}
\newcommand{\cM}{\mathcal{M}}
\newcommand{\cP}{\mathcal{P}}
\newcommand{\cT}{\mathcal{T}}
\newcommand{\cW}{\mathcal{W}}
\newcommand{\sL}{\mathscr{L}}
\newcommand{\rx}{\mathbf{x}}
\newcommand{\ry}{\mathbf{y}}
\newcommand{\rz}{\mathbf{z}}
\begin{document}
\selectlanguage{english}

\maketitle

\begin{abstract}
The theory of Wasserstein gradient flows in the space of probability measures has made an enormous progress over the last twenty years. It constitutes a unified and powerful framework in the study of dissipative partial differential equations (PDEs) providing the means to prove well-posedness, regularity, stability and quantitative convergence to the equilibrium. The recently developed entropic regularisation technique paves the way for fast and efficient numerical methods for solving these gradient flows. However, many PDEs of interest do not have a gradient flow structure and, a priori, the theory is not applicable. In this paper, we develop a time-discrete entropy regularised variational scheme for a general class of such non-gradient PDEs. We prove the convergence of the scheme and illustrate the breadth of the proposed framework with concrete examples including the non-linear kinetic Fokker-Planck (Kramers) equation and a non-linear degenerate diffusion of Kolmogorov type. \textcolor{black}{Numerical simulations are also provided.}
\end{abstract}
\vspace{0.3cm}

\noindent
{\bf 2020 AMS subject classifications:}\\
Primary: 35K15, 35K55. Secondary: 65K05, 90C25.

\scriptsize{
\tableofcontents
}
\normalsize{}
\normalsize

\section{Introduction}
In the seminal work \cite{jordan1998variational} Jordan, Otto and Kinderlehrer show that the linear Fokker-Planck Equation (FPE)
$$
\partial_t \rho = \div(\rho\nabla f)+\Delta \rho\quad \text{on}~~\mathbb{R}_+\times \mathbb{R}^d\quad\text{and}\quad \rho(0,\cdot)=\rho_0, 
$$
where the potential $f\colon\mathbb{R}^d\rightarrow [0,\infty)$ is a smooth function, can be interpreted as a gradient flow of the free energy functional with respect to the Wasserstein metric. More specifically, they prove that the solution of the FPE can be iteratively approximated by the following minimising movement (steepest descent) scheme: given a time-step $h>0$ and defining $\rho^0_h:=\rho_0$, then the solution $\rho^n_h$ at the $n$-th step, $n=1,..., \lfloor \frac{T}{h}\rfloor$, is determined as the unique minimiser of the following minimisation problem 
\begin{equation}
\label{eq discrete general grad flow}
    \min_{\rho} \frac{1}{2h} W_2^2(\rho^{n-1}_h, \rho)+\mathcal{F}_{\textrm{fpe}}(\rho),
\end{equation}
over the space of the probability measures with finite second moments. In \eqref{eq discrete general grad flow}, the free energy functional $\mathcal{F}_{\textrm{fpe}}$ is the sum of the (negative) Boltzmann entropy functional and the external energy functional, and $W_2(\cdot,\cdot)$ denotes the Wasserstein distance between two probability measures on $\mathbb{R}^d$ with finite second moments, see Section \ref{sec: notation} for detailed definition. The variational scheme \eqref{eq discrete general grad flow} is now commonly known in the literature as the `JKO-scheme'. Over the last twenty years, many PDEs have been shown to fit this Wasserstein gradient flow perspective. These include the porous medium equation \cite{otto2001geometry}, the (non-linear-non-local) Vlasov-Fokker-Planck equation \textcolor{black}{(aggregation-diffusion equation)} \cite{Carrillo2006,carrillo2003kinetic}, the fourth order quantum drift-diffusion equation and related models \cite{Gianazza2009,Matthes2009}, just to name a few. The theory of Wasserstein gradient flows creates links between different areas of mathematics such as analysis, optimal transport, and probability theory, and constitutes a unified and powerful framework in the study of dissipative PDEs providing the means to prove well-posedness, regularity, stability and quantitative convergence to the equilibrium, see the monographs \cite{ambrosio2008gradient, villani2008optimal} for great expositions of the topic.  In the last decade, the theory has been extended to a variety of different settings including general metric spaces \cite{ambrosio2008gradient}, Riemann manifolds \cite{Zhang2007}, and discrete structures \cite{Chow2012,Maas2011,Mielke2013}. More recently, it has been shown that, for many systems, the Wasserstein gradient flow structure arises from large deviation principles of the underlying stochastic processes \cite{adams2013large,adams2011large,duong2013wasserstein,Duong2013, erbar2015large}. The links between Wasserstein gradient flows and large deviation principles not only explain the origin and interpretation of such structures but also give rise to new gradient-flow structures \cite{mielke2014relation}. 

\textbf{Entropic regularisation of optimal transports and of Wasserstein gradient flows}. 
The most distinguished feature of the JKO-scheme \eqref{eq discrete general grad flow} is that it reveals explicitly the (physically relevant) free energy functional as the driving force and the Wasserstein metric as the dissipation mechanism for the Fokker-Planck equation.  There has been a growing interest in developing structure-preserving numerical methods for Wasserstein-type gradient flows using the JKO scheme \cite{burger2012regularized,carrillo2019blob, carrillo2010numerical}. However, from a computational point of view, implementing the JKO scheme \eqref{eq discrete general grad flow} directly is expensive since at each iteration it requires the resolution of a convex optimisation problem involving a Wasserstein distance to the previous step. This is a common difficulty in the computation of optimal transport problems. The entropic regularisation technique developed in \cite{cuturi2013sinkhorn} overcomes this difficulty by transforming the transport problem into a strictly convex problem that can be solved more efficiently with matrix scaling algorithms such as the Sinkhorn’s algorithm \cite{KnoppSinkhorn1967}. This regularisation technique has found applications in a variety of domains such as machine learning, image processing, graphics and biology, see the recent monograph \cite{peyre2019computational} for a great detailed account of the topic. By replacing the usual Wasserstein distance in the JKO scheme \eqref{eq discrete general grad flow} by its entropy smoothed approximation one obtains a regularised scheme for the Fokker-Planck equation. As in general entropic regularisation techniques for optimal transport problems, the regularised scheme leverages the reformulation of this smooth optimisation problem as a Kullback-Leibler projection and makes use of Dykstra's algorithm to attain a fast and convergent numerical scheme \cite{carlier2017convergence,peyre2015entropic}. Similar ideas have been applied to other evolutionary equations such as flux-limited gradient flows \cite{matthes2020discretization} and a tumour growth model of Hele-Shaw type \cite{DiMario2020}. 

\textbf{Variational formulation for non-gradient systems.} Many fundamental PDEs are not gradient flows but still posses an entropy (Lyapunov) functional. A typical example is the kinetic Fokker-Planck (Kramers) equation, which is a degenerate diffusion (the Laplacian operator acts only on the velocity variable but not on the position ones) and contains both conservative and dissipative effects \cite{kramers1940brownian,Ris84}. Due to the presence of the entropy functional, developing a variational formulation akin to the JKO-minimising movement scheme \eqref{eq discrete general grad flow} for these non-gradient systems is a natural question, but is still generally open. The main  difficulty in constructing such variational schemes is to find an appropriate (optimal transport) cost function(al), which is often non-homogeneous, time-step dependent and does not induce a metric. Nonetheless, for the kinetic Fokker-Planck equation, several schemes have been built, in which the corresponding cost functions are found based on either the fundamental solution or the conservative part \cite{duong2014conservative,Huang00}, \textcolor{black}{see also \cite{HuangJordan2000} for a similar approach for the non-linear Vlasov-Poisson-Fokker-Planck equation}.  Other interesting examples include the class of Lagrangian systems with local transport  \cite{figalli2011variational} and a class of degenerate diffusions of Kolmogorov type \cite{DuongTran18} in which the cost functions are derived respectively from the underlying Lagrangian structure and the small-noise (Freidlin-Wentzell) large deviation rate functional.\medskip

In this paper, motivated by the discussion in the previous paragraphs, we develop entropic regularisation schemes for a general class of non-gradient systems and apply the abstract framework to several concrete examples. 


\textbf{An abstract framework for non-gradient systems}. In this work we consider evolution equations of the form
\begin{equation}
\label{eq:Formal-Evolution}
\partial_t\rho =\sL^\ast \rho, \qquad 
\rho\vert_{t=0}=\rho_0,
\end{equation}
where $\sL^\ast$ is the formal (linear or non-linear) adjoint operator of the generator $\sL$ of a Markov process on a state space $\bR^d$ and the unknown $\rho$ is a time-dependent probability measure on~$\bR^d$, i.e.~$\rho:[0,T]\rightarrow \mathcal{P}(\bR^d)$. Thus Equation \eqref{eq:Formal-Evolution} can be viewed as the forward Kolmogorov equation associated to the Markov process describing the time-evolution of $\rho$. Equation \eqref{eq:Formal-Evolution} arises naturally in statistical mechanics for which $\rho(t,x)\,dx$ often models the probability of finding a particle, evolving according to the Markov process, at state $x$ and time $t$ \cite{Ris84}. We focus on systems where the operator $\sL^*$ has a general non-linear drift-diffusion form
\begin{equation}
\label{eq: general form of L*}
\sL^\ast\rho
=\div\big(b\rho\big)+\div\Big(\rho A\nabla\frac{\delta \cF}{\delta\rho}\Big),
\end{equation}
where $b\colon \bR^d\rightarrow \bR^d$ is a given vector field, $A$ is a symmetric (possibly degenerate) matrix in $\bR^{d\times d}$ and $\mathcal{F}\colon\cP(\bR^d)\rightarrow \bR$ is the free energy functional which is the sum of an internal energy and an external energy, see Section \ref{sec: notation} for a precise formulation. When $b=0$ and $A$ is non-singular, \eqref{eq:Formal-Evolution} is a (weighted) Wasserstein gradient flow \cite{lisini2009nonlinear}. However, \textcolor{black}{in general \eqref{eq:Formal-Evolution} is a non-reversible dynamics due to the fact that the drift $b$ is not necessarily a gradient (also the (constant) diffusion matrix $A$ may be degenerate) }\cite{adams2013large}.  This class covers non-gradient systems such as the non-linear kinetic Fokker-Planck equation and a non-linear degenerate diffusion equation of Kolmogorov type, which will be discussed in detail in Section \ref{section Concrete Problems} as concrete applications. 

\textbf{Entropic regularisation for non-gradient systems}. In this paper, we develop an entropic regularised variational approximation scheme for the evolution equation \eqref{eq:Formal-Evolution}.  The scheme is as follows: given a small parameter (which is the strength of the regularisation) $\varepsilon>0$ and a time-step $h>0$,  define $\rho_{h,\varepsilon}^{0}=\rho_0$ then $\rho_{h,\varepsilon}^{n}$ is iteratively (over $n=1,\ldots,N$ with $h$ such that $hN=T$) determined as the unique minimiser of the following minimisation problem
\begin{equation}
\label{eq: regularized scheme}
      \min_{\rho} \frac{1}{2h}\cW_{c_h,\varepsilon}(\rho^{n-1}_{h,\varepsilon},\rho)+\cF(\rho),
\end{equation}
over the space $\cP^r_2(\bR^d)$ of absolutely continuous probability measures with finite second moment. Here $\cW_{c_h,\varepsilon}$ is an appropriate regularised Monge-Kantorovich optimal transport cost functional 
\begin{equation}
\label{eq: regularized cost functional}
    \cW_{c_h,\varepsilon}(\mu,\nu):=\inf_{\gamma\in \Pi(\mu,\nu)}\Big\{\int_{\bR^{2d}} c_h(x,y)\gamma(dx,dy)+\varepsilon H(\gamma)\Big\},
\end{equation}
where the infimum is taken over the couplings between $\mu$ and $\nu$. In \eqref{eq: regularized cost functional}, the function $c_h:\bR^{d}\times \bR^{d}\to \bR$, which depends on the time-step $h$, should be thought of as the cost of displacing mass from point $x$ to $y$ in a time-step $h$. The regularisation term, $H(\gamma)$, is the entropy of $\gamma$. We note that no specific form for the cost $c_h$ is prescribed, instead, it is assumed to satisfy the conditions in Assumption \ref{assumption for the cost} (see below) which in turn means that $c_h$ is not necessarily a metric. To the best of our knowledge we are unaware of any general algorithm yielding $c_h$ given the generator $\sL$, nonetheless, in our examples Section \ref{section Concrete Problems} below we provide concrete methods to identify $c_h$. The minimisation problem (which is \eqref{eq: regularized scheme} for a single step), 
\begin{equation}\label{eq : minimisation problem}
    \argmin_{\nu \in \cP^r_2(\bR^d)} \Big\{ \frac{1}{2h}\cW_{c_h,\varepsilon}\big(\mu,\nu\big)+\cF(\nu) \Big\},
\end{equation}
will play an essential role in this work. The contribution of the present paper include: 
\begin{enumerate}
    \item[(i)] Proposition \ref{lemma : well-posedness of jko scheme} proves the well-posedness of the optimal transport minimisation problem \eqref{eq : minimisation problem}.
    \item[(ii)] \textit{An abstract framework.} Theorem \ref{theorem MAIN THEOREM} establishes, under certain conditions on the drift vector $b$, the diffusion matrix $A$ and the cost function $c_h$ (See Section \ref{section An abstract result} for precise statements), the convergence of the regularised scheme \eqref{eq: regularized scheme} to a weak solution of \eqref{eq:Formal-Evolution}.
    \item[(iii)] \textit{Concrete applications.} We illustrate the generality of our work in Section \ref{section Concrete Problems} by providing three examples to which our work is applicable: a non-linear diffusion equation with a general (\textcolor{black}{constant}, possibly singular) diffusion matrix, the non-linear kinetic Fokker-Planck (Kramers) equation, and a non-linear degenerate diffusion equation of Kolmogorov type. The drift vector field $b$ is not present in the first example but plays an important role in the last two cases. 
    \item[(iv)] \textcolor{black}{\textit{Numerics.} In Section \ref{sec:numerical experiment} a numerical implementation of our scheme, via a matrix scaling algorithm, is shown to solve Kramers equation.} 
\end{enumerate}
The proof of Proposition \ref{lemma : well-posedness of jko scheme} follows the standard procedures in \cite{carlier2017convergence,villani2008optimal}. We now provide \textcolor{black}{further discussion} concerning the points (ii), (iii) \textcolor{black}{and (iv)}.\
 \medskip 

\textbf{Comparison with the existing literature.} The general framework we detail in Section \ref{section An abstract result} provides a sufficient condition to guarantee the convergence of the regularised variational iterative scheme \eqref{eq: regularized scheme} to a weak solution of \eqref{eq:Formal-Evolution}.  We emphasise that the three distinguishing features of the PDE class we handle and which makes this an involved task are: the drift $b$ is not assumed to be of gradient type, $A$ can be singular and the operator $\sL^*$ can be non-linear. We have not found other works which deal with these features simultaneously (with or without regularisation). The proof of the main abstract theorem follows the now well-established procedure introduced originally in \cite{jordan1998variational}. However, due to the incorporation of the mentioned features, several technical improvements are  performed, in particular the introduction/construction of change of variable maps to deal with the non-metric essence of the cost function $c_h$ (see Assumption \ref{Assumption change of var for f and c - for regular.}). Our framework generalises several specific cases that have been studied previously in the literature.

A regularised variational scheme for the non-linear diffusion equation when the drift $b$ vanishes and the diffusion matrix $A$ is the identity matrix has been studied in \cite{carlier2017convergence}. This paper actually inspires our work and we slightly extend it to the case when $A$ is a general (possibly singular) matrix. This provides an entropic regularised scheme for weighted-Wasserstein gradient flows \cite{lisini2009nonlinear}. More importantly, as mentioned above, our framework accommodates singular diffusion coefficients. \textcolor{black}{In this vein, our work generalises, by allowing non-linear diffusions and including regularisation, previous works that develop un-regularised JKO-type variational approximation schemes for the linear kinetic Fokker-Planck (Kramers) equation \cite{duong2014conservative,Huang00} and a degenerate diffusion equation of Kolmogorov type \cite{DuongTran18}. In addition, several papers numerically investigate and implement regularised schemes for these equations but do not rigorously prove the convergence of the schemes as the regularisation strength tends to zero \cite{Caluya2021,caluya2019gradient}. Thus our present work provides a rigorous foundation for these works. We emphasise that our proof of convergence also holds true without regularisation. By introducing regularisation, our proposed schemes are also computationally tractable and useful for numerical purposes (see Section \ref{sec:numerical experiment} for discussion on the numerical implementation and illustrations).
}
\smallskip

\textbf{Outlook for future work}. The examples considered in this paper belong to a more general class of non-gradient systems, namely GENERIC (General Equation for Non-Equilibrium Reversible-Irreversible Coupling) systems \cite{Oettinger05}. The GENERIC framework has been used widely in physics and engineering, most notably to derive coarse-grained models. As indicated by its name, GENERIC systems contain both reversible dynamics and irreversible dynamics which are described via two geometric structures (a Poisson structure and a dissipative structure) and two functionals (an energy functional and an entropy functional). These operators and functionals are required to satisfy certain conditions, under which GENERIC systems automatically justify the laws of thermodynamics, namely the energy is preserved and the entropy is increasing (note that the entropy in mathematical literature is the negative of the entropy in the physics literature). The appearance of the concepts of
energy and entropy in the formulation of GENERIC suggests a strong variational connection. However, establishing a variational formulation (even unregularised) akin to the JKO-minimising movement scheme \eqref{eq discrete general grad flow}, in particular identifying a suitable cost function for GENERIC systems is still open, \textcolor{black}{although, encouraging attempts have been made recently for several systems as discussed above. Another interesting problem for future work is to develop and establish the convergence of JKO-type minimising movement schemes for (non-linear, degenerate) non-diffusive systems. For these systems, a proof following the seminal procedure in \cite{jordan1998variational}, which is employed in this paper, cannot be directly applied because the corresponding objective functional is not superlinear due to the absence of the entropy term. Thus, a delicate analysis needs to be introduced to obtain necessary compactness properties for the sequence of the discrete minimisers. Such analysis has been carried out for the transport equation \cite{KinderlehrerTudorascu2006} and its linear kinetic counterpart \cite{DuongLu2019}; however,  for more complicated systems such as the kinetic equation of granular media \cite{Agueh2016} it is still an open question.} Finally, the convergence analysis of (fully discretised) regularised schemes which possess a time-step dependent, non-homogeneous, non-metric cost function such as the ones in this paper or in \cite{figalli2011variational,peletier2020jump} has not been explored in totality. 

\bigskip

\textbf{Organisation of the paper.} In Section \ref{sec: main results} we present the framework and the main abstract result of this paper, Theorem \ref{theorem MAIN THEOREM}.   Section \ref{section Concrete Problems} outlines some explicit examples of where our work is applicable, their verification is left to the appendix. \textcolor{black}{ A numerical implementation of our scheme applied to Kramers equation is carried out and analysed in Section \ref{sec:numerical experiment}}.   Section \ref{section well posedness} contains the well-posedness of the scheme, and in Section \ref{sec:GeneralCase} we prove the main result. \textcolor{black}{In the Appendix we give proofs of some technical lemmas and verification of the examples. }

\section{Main Results}
\label{sec: main results}
In this section, we first introduce notations that will be used throughout the paper, then we present the lists of assumptions, together with their interpretations, and finally we state the main abstract result, Theorem \ref{theorem MAIN THEOREM}.  
\subsection{Notation}
\label{sec: notation}

Throughout $d\in \bN$ will be the dimension of the space. A fixed real $T>0$ denotes the length of the time interval we consider. Throughout, $C$ denotes a constant whose value may change without indication and depends on the problem's involved constants, but, critically, it is independent of key parameters of this work, namely $\varepsilon,h>0,N\in \bN$ introduced in Assumption \ref{assumption on scaling}. The Euclidean inner product will be written as $\langle \cdot , \cdot \rangle$. We write $\|\cdot\|$ as the Euclidean norm on $\bR^d$, and $|\cdot|$ when $d=1$. The symbol  $\|\cdot\|$ is also used as the $2$-norm on $\bR^{d_1\times d_2}$. For a matrix $A$ let $A^T$ be its transpose. 


The space of Lebesgue $m-$integrable functions on $\Omega \subset \bR^d$ is denoted by $L^m(\Omega)$, with norm $f\mapsto  \|f\|_{L^m(\Omega)}= \big( \int_{\Omega} \|f(x)\|^m dx \big)^{1/m}$. Let $\Omega \subset \bR^d$, the supremum norm $\|\cdot\|_{\infty,\Omega}$ of a vector field $\phi : \Omega \to \bR^d$, or a function $\phi : \Omega \to \bR$, is used to denote $\sup_{x\in \Omega}\|\phi(x)\|$, $ \sup_{x\in \Omega}|\phi(x)|$ respectively, when $\Omega=\bR^d$ we just write $\|\cdot\|_{\infty}$. 

We use an enhanced version of the Landau ``big-O'' and ``small-o'' notation in the following way: the ``big-O'' notation $\phi(h)=O(\varphi(h))$, for functions $ \phi,\varphi : \bR_+ \to \bR $ denotes that there exists $C,h_0>0$ such that $|\phi(h)|\leq C \varphi(h)$ for all $h<h_0$ and we say a matrix $B\in \bR^{d\times d}$ is $O(h)$ if $\max_{i,j}|B_{i,j}|\leq C h$ -- \color{black}critically, the constants $C,h_0$ are independent of any other parameter/variable of interest that $\phi$ or $B$ may depend on (otherwise such dependence is made explicit).

\color{black} Further we use the Landau ``little-o'' notation $\phi(h)=o(\varphi(h))$ to mean $\lim_{h\to 0} \frac{\phi(h)}{\varphi(h)}=0$.

Let $A,B\subseteq \bR^d$, define $C^k(A;B)$ as the $k-$times continuously differentiable functions from $A$ to $B$ with continuous $k^{th}$ derivative. Define $C^\infty_c(A;B)$ as the set of infinitely differentiable functions from $A$ to $B$ with compact support. 
Let $\nabla \phi$, $\Delta \phi$, and $\nabla^2 \phi$ be the gradient, Laplacian, and Hessian respectively, of a sufficiently smooth function $\phi : \bR^d \to \bR $. For a sufficiently smooth vector field $\eta : \bR^d \to \bR^d$ let $\divv (\eta)$, and $D\eta$ be its divergence and Jacobian respectively. We call the identity map $\text{id}$. 


Denote the space of Borel probability measures on $\bR^d$ as $\cP(\bR^d)$. The second moment $M$ of a measure $\rho \in \cP(\bR^d)$ is defined as
\begin{equation}\label{eq second moment definition}
    \cP(\bR^d) \ni \rho \mapsto 
M(\rho):=\int_{\bR^d}\|x\|^2\rho(dx).
\end{equation}
The set of probability measures with finite second moments is denoted by $\cP_2(\bR^d)$, 
\begin{equation}
\label{eq: measure second moment space definition}
    \cP_2(\bR^d):=\{\rho \in \cP(\bR^d)~:~M(\rho)<\infty\}.
\end{equation}
Define $\cP_2^r(\bR^d)$ as those $\rho \in \cP_2(\bR^d)$ which are absolutely continuous with respect to the Lebesgue measure. We will use the same symbol $\rho$ to denote a measure $\rho \in \cP_2^r(\bR^d)$ as well as its associated density. Define $H$ to be the negative of Boltzmann entropy,
\begin{equation}\label{eq bolztman entropy definition }
    \cP(\bR^d) \ni \rho \mapsto 
H(\rho):= \begin{cases}
    \int_{\mathbb{R}^{d}} \rho \log \rho, &\text{if $\rho \in \cP^r_2(\bR^d)$}
    \\
    +\infty, & \textrm{otherwise}
    \end{cases},
\end{equation}
 which throughout we will just refer to as the entropy. 

The set of transport plans between given measures $\mu,\nu \in \cP_2(\bR^d)$ is denoted by $\Pi(\mu,\nu)\subset \cP_2(\bR^{2d})$. That is, for $\mu,\nu \in \cP_2(\bR^d)$, $\gamma \in \Pi(\mu,\nu)$ if $\gamma(\mathcal{B}\times \bR^d)=\mu(\mathcal{B})$ and $\gamma( \bR^d \times \mathcal{B})=\nu(\mathcal{B})$ for all Borel sets $\mathcal{B}\subset \bR^d$. Let $\Pi^r(\mu,\nu)$ be those $\gamma \in \Pi(\mu,\nu)$ which are absolutely continuous. Throughout, when a measure is said to be `absolutely continuous' we implicitly mean with respect to the Lebesgue measure. We denote a sequence of probability measures indexed by $k\in \bN$ as $\{\mu_k\}_{k\in \bN}$ which we relax to $\{\mu_k\}$. We use the symbol $\rightharpoonup$ to mean the weak convergence of measures. For any two subsets $P,Q\subset \cP_2(\bR^d)$ we denote $\Pi(P,Q)$ as the set of transport plans whose marginals lie in $P$ and $Q$ respectively. For a vector field $\eta:\bR^d\to \bR^d$ and measure $\mu\in\cP(\bR^d)$ we write $(\eta)_{\#}\mu$ as the push-forward of $\mu$ by $\eta$. For any probability measure $\gamma$ and function $c$ on $\bR^{2d}$ we write
$$
(c,\gamma):=\int_{\bR^{2d}} c(x,y) \gamma(dx,dy).
$$
Lastly, the $2$-Wasserstein distance on $\cP_2(\bR^d)$ is denoted by $W_2$.

\subsection{The abstract framework and the main result}
\label{section An abstract result}
 
In this section we present the working assumptions of our abstract framework, namely, the assumptions placed on the operator $\sL^*$ \eqref{eq: general form of L*}, and transport cost $c_h$, which are assumed to hold throughout. Under these assumptions the regularised scheme \eqref{eq: regularized scheme} can be shown to be well-posed and to converge to the weak solution of the evolution equation \eqref{eq:Formal-Evolution}. 

\begin{assumption} [Free energy]
\label{Assumption on potential and internal energy} 
We assume there is a fixed constant $C>0$ such that the following holds. 
The free energy functional $\cF : \cP^r_2(\bR^d)\to \bR$ is the sum of a potential energy and an internal energy functional
\begin{equation}
    \label{eq: general free energy}
    \cF(\rho)=F(\rho)+U(\rho), 
\end{equation}
with 
\begin{equation*}
   F(\rho)= \int f(x) \rho(x)dx, \quad \text{ and } \quad  U(\rho)=\int u(\rho(x))\,dx.
\end{equation*}
The internal energy function  $u:[0,\infty)\to \bR$ is twice differentiable $u\in \textcolor{black}{C^2((0,\infty);\bR)}$, convex, $u(0)=0$, superlinear
 \begin{equation*}
     \lim_{s\to \infty} \frac{u(s)}{s} =\infty,
 \end{equation*}
and there exists $\frac{d}{d+2}<\alpha<1$ such that 
\begin{equation}\label{eq : an assumption on u}
    u(s)\geq -Cs^\alpha.
\end{equation}
Moreover, for any $s\in[0,\infty)$ we call $p(s):=u'(s)s-u(s)$ the pressure associated to $U$, and assume there exists some $m\in \bN$ such that 
 \begin{equation}\label{additional assumptions 3}
      p(s)\leq C s^m,
      \quad \text{and}\quad 
      p'(s)\geq \frac{s^{m-1}}{C},
    \end{equation}
  and 
   \begin{equation}\label{additional assumptions 2}
        \frac{1}{C}\int_{\bR^d} (\rho(x))^m dx\leq C M(\rho)+U(\rho), \quad \forall \rho \in \cP^r_2(\bR^d).
    \end{equation}
The potential energy $f\in C(\bR^d)$ is assumed to be non-negative $f(x) \geq 0$, and Lipschitz 
\begin{equation}
\label{assumption f Lipschitz}
    |f(x)-f(y)|\leq C \|x-y\|,\qquad \forall x,y\in \bR^d.
\end{equation}
\end{assumption}
Using the formula of the free energy, \eqref{eq:Formal-Evolution} can be written explicitly in terms of the drift $b$, the diffusion matrix $A$, the potential $f$ and the pressure $p$ as follows
\begin{equation*}
    \partial_t\rho=\sL^\ast\rho
=\div\big(b\rho\big)+\div\Big[A\Big(\nabla p(\rho)+\rho\nabla f\Big)\Big].
\end{equation*}
 \begin{rem}
To comment on the scope of Assumption \ref{Assumption on potential and internal energy}, note that the convexity and superlinear growth at infinity  of $u$  ensure that the functional $U$ is lower semi-continuous with respect to the weak convergence of measures, see Lemma \ref{lemma : appendix : l.s.c of internal energy}. \eqref{eq : an assumption on u} implies that the negative part of $u(\rho)$ is in $L^1(\bR^d)$ (for $\rho\in\cP_2(\bR^d)$). The infinitesimal pressure is modelled by $p$ and is clearly non-negative and increasing, we refer to \cite[Chapter 15]{villani2008optimal} for a further discussion. Its structure,  {\eqref{additional assumptions 3}}, allows for a large class of internal energy functionals $U$, capturing in particular the cases of the Boltzmann entropy and power functions.  

It 
is natural for the potential $f$ to be assumed bounded from below, this ensures the lower semi-continuity of $F$ with respect to weak convergence. Also, a Lipschitz $f$ means that $\frac{f(x)}{\|x\|+1}<C$ and hence $F$ will be finite.  
The aforementioned lower semi-continuity, as well as the linearity of $F$ and convexity of $U$ is the standard framework to obtain the well-posedness of the scheme. 
 \end{rem}
 
 \begin{assumption}\label{assumption on b and A}[On $b$ and $A$]
 The constant matrix $A\in \bR^{d\times d}$ is symmetric. The vector field $b\in C(\bR^d;\bR^d)$ is Lipschitz.
  \end{assumption}
  
\begin{rem}
Most notably, we allow for the matrix $A$ to be singular and the vector field $b$ to not necessarily have gradient form. This permits us to study a wider class of PDEs, see Section \ref{section Concrete Problems}. 
\textcolor{black}{When Equation \eqref{eq: general form of L*} is the Kolmogorov forward equation of the associated SDE, $A$ takes the form of the product of a diffusion matrix with its transpose, hence assuming its symmetry is natural.}  
\end{rem}
 
Next, we detail the relationship between $A$, $b$ and the cost $c_h$.
 \begin{assumption}[The cost $c_h$]
 \label{assumption for the cost}
There exists an $h_0>0$ such that for all $0<h<h_0$ the cost map $c_h:\bR^{2d}\to \bR$ is continuous and satisfies the following assumptions.
 
 \begin{enumerate}
\item Fix any $x\in \bR^d$, the map $y\mapsto c_h(x,y)$ is differentiable.

\item There exists a real valued $d\times d$-matrix $B_h$ of order $O(h)$ such that 
 \begin{equation}
 \label{equation assumption for the cost}
\big\langle  \nabla_y c_h(x,y), \tilde{\eta} \big\rangle  
-\big\langle  2(y-x) - 2h b(y) , \eta \big\rangle 
= O(h^{2})(1+\textcolor{black}{\|\eta\|})(\|x\|^2+\|y\|^2+1)+ O(1) c_h(x,y),
\end{equation}
\textcolor{black}{for all $\eta,x,y\in \bR^d$ }, where $\tilde{\eta}:=(A+B_h) \eta$.

\item  There exists a constant $C(h)>0$, possibly depending on $h$, such that 
\begin{equation}\label{eq 93}
  \|\nabla_y c_h(x,y)\|\leq C(h)\big( \|x\|^2+\|y\|^2 +1\big),\qquad \forall x,y\in \bR^d.
\end{equation}

\item There exists $C>0$ for all $x,y\in \bR^d$ such that 
\begin{equation}
\label{eq : cost realted to euclidean cost}
\|x-y\|^2 \leq C\big ( c_h(x,y)+h^2(\|x\|^2+\|y\|^2) \big),
\end{equation}
and, for some constant $C(h)>0$, possibly depending on $h$,
\begin{equation}\label{eq : cost upper bound}
    c_h(x,y)\leq C(h) \big(\|x\|^2+\|y\|^2\big),
\end{equation}
and
\begin{equation}\label{eq : cost lower bound}
    0\leq c_h(x,y).
\end{equation}
\end{enumerate}
 \end{assumption}
Before proceeding, a thorough review of this assumption is in order and we do so via the following sequence of remarks. 
\begin{rem}\ \label{rem:comments on assumption} 
\begin{enumerate} 
   \item It is the main step of the JKO procedure that motivates \eqref{equation assumption for the cost}. That is, \eqref{equation assumption for the cost} provides the essential link between the discrete Euler-Lagrange equations of our scheme (\eqref{eq Euler-Lagrange} below) and the weak solution of \eqref{eq:Formal-Evolution} (given by \eqref{eq: weak formulation general PDE} below). Equation \eqref{equation assumption for the cost} lets us replace the cost term by the drift $b$ in the discrete Euler-Lagrange equation. The RHS of \eqref{equation assumption for the cost} then guarantees that the error we make when doing this operation is still of the correct order, see Lemma \ref{lemma : general E.L equation}.
   
 
  
   \item \color{black} Conditions \eqref{eq : cost realted to euclidean cost} and \eqref{eq : cost upper bound} allow us to estimate the optimal transport cost functional $\cW_{c_h,\varepsilon}$, which is generally not a distance, in terms of the traditional Wasserstein distance. \color{black} Both \eqref{eq : cost upper bound} and \eqref{eq : cost lower bound} are natural conditions to guarantee that $\cW_{c_{h},\varepsilon}(\cdot,\cdot)$ is well defined on $\cP_2^r(\bR^d)\times\cP_2^r(\bR^d)$. The condition \eqref{eq : cost lower bound} also provides weak lower semi-continuity of $\gamma \mapsto (c_h,\gamma)$ which is essential, see the proof of Proposition \ref{lemma : well-posedness of jko scheme}, for the well posedness of the minimisation problem \eqref{eq : minimisation problem}. Again, the constant $C(h)$ may blow up as $h\to 0$. 
   \item Condition \eqref{eq 93} will be used to obtain a strong convergence for the (non-linear) pressure term when establishing the convergence of the scheme by passing to the limit $h\rightarrow 0$. Specifically, for each fixed $h>0$ \eqref{eq 93} guarantees integrability of $\|\nabla_yc_h\|$ against measures in $\cP_2(\bR^{2d})$. 
\end{enumerate}
\end{rem} 

We now remark on the generality of the cost map $c_h$.
\begin{rem}[The generality of the cost $c_h$ and concrete Examples]
Notably, the cost is \textit{not} restricted to those of the form $c_h(x,y)=c_h(x-y)$ with $c_h(x,x)= 0$, indeed such costs are usually associated to gradient flows \cite{agueh2005existence,jordan1998variational,lisini2009nonlinear}. It is clear that Assumption \ref{assumption for the cost} is verifiable in the case of $b=0$, $A$ symmetric non-singular, and  $c_h(x,y)=\langle {A^{-1}}(x-y), x-y \rangle$ the weighted Wasserstein. Indeed in \eqref{equation assumption for the cost} one can pick $B_h=0$, and obtain the exact equation 
$$
\Big\langle \nabla_y c_h(x,y), A \eta \Big\rangle   =\Big\langle  2(y-x) , \eta \Big\rangle.
$$
We claim that many fundamental non-linear PDEs will fit the structure of Assumption \ref{assumption for the cost}, and refer the reader to Section \ref{section Concrete Problems} for illustrative examples.  
 \end{rem}

\begin{assumption}[The regularisation]\label{Assumption change of var for f and c - for regular.}

  For each $h>0$ there exists a function $\mathcal{T}_h:\bR^d\to\bR^d$, called henceforth a `change of variable', such that for some $\beta >0$ and any $\sigma>0$, $z,x\in \bR^d$ 
\begin{equation}\label{kramer : eq : equation 61, with external}
    c_h(x,\mathcal{T}_h(x)+\sigma z) 
    \leq C\Big( \frac{\sigma}{h^\beta }\big(\|z\|^2+1\big) +h^2\big(\|x\|^2+1\big)\Big),
\end{equation}
and
\begin{equation}\label{eq 81}
    \big|f(\cT_h(x)+\sigma z)-f(x)\big|
    \leq C\Big( \frac{\sigma}{h^\beta }\big(\|z\|^2+1\big) +h\big(\|x\|^2+1\big)\Big),
\end{equation}
and the partial derivatives of  $\mathcal{T}_h$ are assumed 
continuous. 

\end{assumption} 

\begin{rem}
\color{black}
The above change of variables is used in Lemma \ref{lemma : change of variable satisfy assumption} to construct an admissible plan in the entropy regularised minimisation problem, allowing one to obtain a priori estimates which are crucial in establishing the convergence of the scheme. \color{black} Although the above assumption may seem burdensome to check, in practice it is not. In the classical case $c_h(x,y)=\|x-y\|^2$ one simply takes $\cT_h(x)=x$. Other examples of $\cT_h$ are given in Section \ref{section Concrete Problems}, where its clear that \eqref{eq 81} will be straightforward since $f$ is assumed Lipschitz.
\end{rem}

\begin{assumption}[The regularisation's scaling parameters]
\label{assumption on scaling}
Take three sequences $\{N_k\}_{k\in \bN}\subset \bN$, $\{\varepsilon_k\}_{k\in \bN}\subset \bR_+$, and $\{h_k\}_{k\in \bN} \subset \bR_+$, which, for any $k\in \bN$, abide by the following scaling

\begin{equation}\label{eq: assumption on scaling}
   h_k N_k = T,~~~\text{and}~~~ 0< \varepsilon_k \leq \varepsilon_k|\log \varepsilon_k|\leq C h_k^2,
\end{equation}
and are such that $h_k,\varepsilon_k\to 0$ and $N_k\to \infty$ as ${k\to \infty}$.
\end{assumption}

\begin{rem}
 The scaling \eqref{eq: assumption on scaling} is a 
 theoretical constraint introduced in \cite{carlier2017convergence} for the convergence of the JKO procedure. It ensures that the entropic regularisation is sufficiently small such that the error made by its introduction in the optimal transport problem is lost in the limit $k\to \infty$. 
\end{rem}

In this work, we are interested in weak solutions to \eqref{eq:Formal-Evolution} as defined next. 
\begin{defn}[Weak solutions]
\label{def: weak formulation general PDE}
A function $\rho\in L^1(\bR^+\times\bR^d)$, with \textcolor{black}{$p(\rho)\in L^1(\bR^+\times \bR^d)$}, is called a weak solution of Equation~\eqref{eq:Formal-Evolution} with initial datum $\rho_0\in \cP^r_2(\bR^d)$ if it satisfies the following weak formulation
\begin{align}
\label{eq: weak formulation general PDE}
&\int_0^T\int_{\bR^d} \partial_t\varphi \rho dx\,dt+\int_0^T\int_{\bR^d}(\sL \varphi)\rho dx\,dt=-\int_{\bR^d}\varphi(x)\rho_0 dx,\quad\text{for all}\quad \varphi\in C_c^\infty(\bR\times \bR^d),
\end{align}
concretely, using the form of $\cL$ \eqref{eq: general form of L*}, 
\begin{align*}
\int_0^T\int_{\bR^d} \partial_t\varphi \rho(dx)\,dt=&-\int_{\bR^d}\varphi(x)\rho_0(dx)+ \int_0^T \int_{\bR^{d}} \rho(t,x) \Big(  \Big\langle A \nabla f(x)  , \nabla \varphi (t,x) \Big\rangle - \Big\langle b(x) , \nabla \varphi(t,x) \Big\rangle \Big) dxdt
\\
& \textcolor{black}{-\int_0^T \int_{\bR^{d}}p(\rho(t,x)) \divv \Big(A\nabla \varphi (t,x)\Big)  dxdt,}\quad\text{for all}\quad \varphi\in C_c^\infty(\bR\times \bR^d).
\end{align*}
\end{defn}

The main (abstract) result of the paper is the following theorem which holds under all the above assumptions. 

\begin{theorem}\label{theorem MAIN THEOREM}[Convergence of the entropic regularisation scheme]
Let $\rho_0\in \cP^r_2(\bR^d)$ satisfy $\cF(\rho_0)<\infty$. Let $k\in\bN$ and take  $\{\rho^{n}_{\varepsilon_{k},h_{k}}\}_{n=0}^{N_{k}}$ to be the solution of the entropic regularisation scheme \eqref{eq: regularized scheme}. Define the piecewise constant interpolation $\rho_{\varepsilon_{k},h_{k}}: (0,\infty)\times \bR^d\rightarrow [0,\infty) $ by
\begin{equation}
    \label{eq: time interpolation}
   \rho_{\varepsilon_{k},h_{k}}(t)
   :=\rho^{n+1}_{\varepsilon_k,h_k} \quad \text{when}\quad \textcolor{black}{t\in[nh_k,(n+1)h_k)}.
\end{equation}
Suppose that Assumptions \ref{Assumption on potential and internal energy}, \ref{assumption on b and A}, \ref{assumption for the cost}, \ref{Assumption change of var for f and c - for regular.}, and \ref{assumption on scaling} hold. Then, as $k \rightarrow \infty$, we have the following convergence up to a subsequence 

\begin{equation*}
    \label{eq: convergence}
    \rho_{\varepsilon_{k},h_{k}}
    \rightarrow \rho \quad\text{in}\quad L^1((0,T)\times \bR^d),
\end{equation*}
where $\rho$ is a weak solution of the evolution equation \eqref{eq:Formal-Evolution}-\eqref{eq: general form of L*} in the sense of Definition \ref{def: weak formulation general PDE}.
\end{theorem}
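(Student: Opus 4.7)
\textbf{Proof proposal for Theorem \ref{theorem MAIN THEOREM}.} The overall strategy follows the classical JKO procedure: (i) derive a priori estimates for the discrete minimisers, (ii) extract compactness and pass to a limiting density, (iii) derive the discrete Euler--Lagrange equation for the scheme and, (iv) use Assumption \ref{assumption for the cost} to convert the gradient of $c_{h}$ into the drift $b$ and recover the weak formulation in the limit.

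\emph{Step 1: a priori estimates.} First I would compare $\rho_{\varepsilon_{k},h_{k}}^{n}$ with a carefully chosen competitor in \eqref{eq : minimisation problem}. Taking the competitor to be the push-forward $(\mathcal{T}_{h_{k}})_{\#}\rho^{n-1}_{\varepsilon_{k},h_{k}}$ convolved with a Gaussian of variance $\sigma_{k}^{2}$ (of order $\varepsilon_{k}h_{k}^{2\beta}$), and using \eqref{kramer : eq : equation 61, with external}--\eqref{eq 81} to bound the cost and potential parts, together with the standard Gaussian formula for the entropy, one obtains the one-step inequality
\begin{equation*}
\tfrac{1}{2h_{k}}\mathcal{W}_{c_{h_{k}},\varepsilon_{k}}(\rho^{n-1},\rho^{n}) + \mathcal{F}(\rho^{n})
\le \mathcal{F}(\rho^{n-1}) + Ch_{k}\bigl(1+M(\rho^{n-1})\bigr) + o(h_{k}),
\end{equation*}
thanks to the scaling in Assumption \ref{assumption on scaling}. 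Summing over $n=1,\dots,N_{k}$, a discrete Gronwall argument combined with \eqref{eq : cost realted to euclidean cost}, \eqref{additional assumptions 2} and the superlinearity of $u$ yields uniform bounds on $\sup_{n}\,M(\rho^{n}_{\varepsilon_{k},h_{k}})$, $\sup_{n}\,\mathcal{F}(\rho^{n}_{\varepsilon_{k},h_{k}})$, and on the total squared displacement $\sum_{n}\tfrac{1}{h_{k}}\int\|x-y\|^{2}\gamma^{n}(dx,dy)\le C$, where $\gamma^{n}$ is an optimiser in \eqref{eq: regularized cost functional}. From \eqref{additional assumptions 2} we also obtain a uniform $L^{m}((0,T)\times\bR^{d})$ bound on $\rho_{\varepsilon_{k},h_{k}}$.

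\emph{Step 2: compactness.} The squared-displacement bound gives an approximate $W_{2}$-continuity in time of $t\mapsto \rho_{\varepsilon_{k},h_{k}}(t)$, of the form $W_{2}^{2}(\rho_{\varepsilon_{k},h_{k}}(t),\rho_{\varepsilon_{k},h_{k}}(s))\le C(|t-s|+h_{k})$. Combined with the $L^{m}$ bound and tightness from the second-moment estimate, a refined Aubin--Lions / Rossi--Savar\'e argument produces a (not relabelled) subsequence and a limit $\rho\in L^{\infty}(0,T;\mathcal{P}_{2}^{r}(\bR^{d}))\cap L^{m}((0,T)\times\bR^{d})$ with $\rho_{\varepsilon_{k},h_{k}}\to\rho$ strongly in $L^{1}((0,T)\times\bR^{d})$. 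The lower semicontinuity of $\mathcal{F}$ (Assumption \ref{Assumption on potential and internal energy}) propagates the entropy bound to the limit.

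\emph{Step 3: discrete Euler--Lagrange equation.} For a test vector field $\xi\in C_{c}^{\infty}(\bR^{d};\bR^{d})$ and small $s$, I would perturb the minimiser by the flow $\Phi_{s}=\mathrm{id}+s\xi$ and the optimal plan $\gamma^{n+1}$ by $(\mathrm{id},\Phi_{s})_{\#}\gamma^{n+1}$ (the regularisation entropy $H$ transforms with a Jacobian that produces a $-\mathrm{div}\,\xi$ contribution). Differentiating at $s=0$ and using the minimality of $\rho^{n+1}$ gives, via standard arguments (see Lemma \ref{lemma : general E.L equation}),
\begin{equation*}
\tfrac{1}{2h_{k}}\!\!\int_{\bR^{2d}}\!\!\langle \nabla_{y}c_{h_{k}}(x,y),\xi(y)\rangle\,\gamma^{n+1}(dx,dy)
+\!\int_{\bR^{d}}\!\!\langle \nabla f,\xi\rangle \rho^{n+1}dx
-\!\int_{\bR^{d}}\!\!p(\rho^{n+1})\,\mathrm{div}\,\xi\, dx = R_{k}^{n}(\xi),
\end{equation*}
with the remainder $R_{k}^{n}$ controlled, after summation, by the regularisation scaling \eqref{eq: assumption on scaling}. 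To bring in the drift $b$ and diffusion $A$, I would specialise $\xi = (A+B_{h_{k}})\eta$ for $\eta\in C_{c}^{\infty}$ depending only on $y$, and apply Assumption \ref{assumption for the cost}\,(2): estimate \eqref{equation assumption for the cost} turns $\langle\nabla_{y}c_{h_{k}},(A+B_{h_{k}})\eta\rangle$ into $\langle 2(y-x)-2h_{k}b(y),\eta\rangle$ up to an error that is integrable against $\gamma^{n+1}$ thanks to the second-moment and total-displacement bounds of Step 1, together with \eqref{eq : cost lower bound}--\eqref{eq : cost upper bound} and \eqref{eq 93}.

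\emph{Step 4: passing to the limit; main obstacle.} Multiplying the Euler--Lagrange equation by $h_{k}$ times a smooth time test function and summing in $n$ produces a fully discrete weak formulation which, in the limit, should match \eqref{eq: weak formulation general PDE}. The linear-in-$\rho$ terms pass by the strong $L^{1}$ convergence of Step 2, and the displacement term $\sum_{n}\int\langle 2(y-x)/h_{k},\eta\rangle\gamma^{n}$ produces the time derivative $\int\partial_{t}\varphi\,\rho$ via a discrete integration by parts (with $\varphi=\langle A\eta,\cdot\rangle$ times the time test), using Step 1 to control cross-terms of order $h_{k}$. The genuine difficulty is the \emph{pressure} term $\int p(\rho^{n+1})\,\mathrm{div}((A+B_{h_{k}})\eta)$: because $p$ is nonlinear, weak $L^{1}$ convergence is insufficient. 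The resolution is to upgrade convergence: the $L^{m}$-bound from \eqref{additional assumptions 2} together with the time-equicontinuity of Step 2 gives strong $L^{1}$ convergence of $\rho_{\varepsilon_{k},h_{k}}$, and the pressure growth \eqref{additional assumptions 3} with Vitali's theorem then yields $p(\rho_{\varepsilon_{k},h_{k}})\to p(\rho)$ in $L^{1}((0,T)\times\bR^{d})$. The vanishing of $B_{h_{k}}$ as $h_{k}\to 0$ ensures the diffusion term converges to $\mathrm{div}(A\nabla\varphi)$. This is where the scaling Assumption \ref{assumption on scaling} is most delicate, as it must simultaneously absorb the entropic error terms from Step 1 and the pointwise remainder in \eqref{equation assumption for the cost} summed over $\sim 1/h_{k}$ steps.
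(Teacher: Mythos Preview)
Your overall architecture matches the paper (a priori estimates via the $\mathcal{T}_h$-competitor, Euler--Lagrange, then pass to the limit), and Steps 1, 3, 4 are essentially what the paper does. However, there is a genuine gap in your compactness argument.

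In Step 2 you claim that the uniform $L^{m}$ bound on $\rho_{\varepsilon_{k},h_{k}}$, tightness from the moment bound, and the $W_{2}$-time-equicontinuity are enough for a Rossi--Savar\'e/Aubin--Lions argument to yield strong $L^{1}$ convergence. They are not: you are missing the \emph{spatial} compactness ingredient. The paper obtains this from an additional estimate (Lemma \ref{Lemma : for the strong convergence}) that does not appear anywhere in your proposal: one shows
\[
\int_{0}^{T}\!\!\int_{\bR^{d}}\|\nabla(\rho_{\varepsilon_{k},h_{k}})^{m}\|\,dx\,dt\le C,
\]
uniformly in $k$. This is extracted from the Euler--Lagrange equation itself: setting $\mu^{n}=\tfrac{\varepsilon}{2h}\rho^{n}+p(\rho^{n})$ and testing \eqref{eq Euler-Lagrange} against arbitrary $\eta\in C^{\infty}_{c}$, one uses \eqref{eq 93} and the disintegration theorem to show $\mu^{n}\in W^{1,1}(\bR^{d})$, and then the structural relation \eqref{equation assumption for the cost} together with the total-cost bound $\sum_{n}(c_{h},\gamma^{n})\le Ch$ (Corollary \ref{corollary sum of the costs}) controls $h\sum_{n}\|\nabla\mu^{n}\|_{L^{1}}$. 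From this and \eqref{additional assumptions 3} one deduces the $W^{1,1}$ bound on $(\rho^{n})^{m}$. Only with this gradient estimate does the Rossi--Savar\'e compactness yield strong convergence in $L^{m}$, which is what is actually needed.

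This also affects your treatment of the pressure in Step 4. Your Vitali argument for $p(\rho_{k})\to p(\rho)$ in $L^{1}$ would require uniform integrability of $p(\rho_{k})\sim \rho_{k}^{m}$; a mere $L^{m}$ bound on $\rho_{k}$ gives $\rho_{k}^{m}$ bounded in $L^{1}$ but not equi-integrable. The paper avoids this by first getting $\rho_{k}\to\rho$ strongly in $L^{m}$ (via the gradient estimate above), and then uses that $p$ is continuous from $L^{m}$ to $L^{1}$ by \eqref{additional assumptions 3}.
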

The proof of this theorem is given in Section \ref{section proof of the main result}. In the next section we provide immediately several examples of interest as an illustration of our main results.

\begin{rem}
We do not prove uniqueness of the weak solution \eqref{eq: weak formulation general PDE} in the general setting, however \textcolor{black}{if uniqueness holds then Theorem \ref{theorem MAIN THEOREM} ensures that there is full convergence of the sequence}. In some cases the uniqueness has already been proved, for instance, if $A$ is the identity $b=0$ and $\cF$ is $\lambda$-displacement convex \cite{ambrosio2008gradient}, or in the case of the Kinetic FPE \cite{Huang00}. 
\end{rem}


\section{Concrete Problems}\label{section Concrete Problems}
Theorem \ref{theorem MAIN THEOREM} gives a general framework in which one can check if the evolution equation \eqref{eq:Formal-Evolution}-\eqref{eq: general form of L*} can be  approximated by the regularised JKO-type variational scheme \eqref{eq: regularized scheme}. Our setup does not immediately provide the cost or the change of variables, this has to be done on a case by case basis. In this section we present a number of examples showcasing the scope of Theorem \ref{theorem MAIN THEOREM}. In each case an explicit cost $c_h$, approximation matrix $B_h$, and change of variables $\cT_h$ are provided, these are then shown to satisfy Assumptions \ref{assumption for the cost} and \ref{Assumption change of var for f and c - for regular.}. In the following examples it is clear that the challenging part is identifying $c_h$ and $B_h$, whereas the change of variables usually comes for free.

The examples below make ample use of Theorem \ref{theorem MAIN THEOREM}, and thus the proofs of the statements for each example are by verification of the several assumptions of the main theorem. We thus, provide the example and results, and postpone the (sometimes tedious) verification to the corresponding Appendix.

\subsection{Non-linear diffusion equations: an illustrative toy example}
\label{sectoion example non linear diff}

In the case that $b=0$ \eqref{eq: general form of L*} becomes the non-linear diffusion equation 
\begin{equation}
\label{eq concrete non-linear diffusion equation}
    \partial_t \rho = \divv \Big( \rho A\big(\frac{\nabla p(\rho)}{\rho}+\nabla f\big) \Big).
\end{equation}
A prototypical example of \eqref{eq concrete non-linear diffusion equation} is the Porous Medium Equation $\partial_t \rho=\Delta \rho^m$, corresponding to $f=0$, $p(\rho)=\frac{\rho^m}{m-1}$ and $A$ is the identity matrix. Equation \eqref{eq concrete non-linear diffusion equation} models non-linear diffusion with drift in homogeneous 
anisotropic material. 
In \cite{lisini2009nonlinear} the author proved the convergence of a weighted-Wasserstein variational approximation scheme for \eqref{eq concrete non-linear diffusion equation} when $A$ is symmetric non-singular, non-constant, and elliptic. In \cite{carlier2017convergence} the authors proved the convergence of an entropic regularised scheme for \eqref{eq concrete non-linear diffusion equation} when $A$ is the identity matrix, in this respect, the following Proposition \ref{prop : the non-linear diffusion i.e continuity equation} extends their work. Therefore we only use this as an illustrative toy example of Theorem \ref{theorem MAIN THEOREM} in action. However, note that we allow the diffusion matrix $A$ to be possibly singular, this means that \eqref{eq concrete non-linear diffusion equation} can be degenerate in (at least) one direction.
Our strategy is to proceed via a viscosity approach. That is we perturb our system such that the choice of an appropriate cost is obvious, and so that in the limit the original system is retained.

\begin{prop}\label{prop : the non-linear diffusion i.e continuity equation}
Let $A$ be symmetric and positive semi-definite, let $b=0$. Define the free energy $\cF$ by \eqref{eq: general free energy} and let $f,p$ satisfy Assumption \ref{Assumption on potential and internal energy}. Let $\rho_0\in \cP^r_2(\bR^d)$ satisfy $\cF(\rho_0)<\infty$. 

Define the cost $c_h:\mathbb{R}^{2d}  \to \bR$ as  
\begin{equation}
    c_h(x,y):=\langle {(A+hI)^{-1}}(x-y),x-y\rangle.
\end{equation}
Let $k\in\bN$ and take $\{\rho^{n}_{\varepsilon_{k},h_{k}}\}_{n=0}^{N_{k}}$ to be the solution of the entropy regularised  scheme \eqref{eq: regularized scheme} with $c_h$ and $\cF$ as defined above. Define the associated piecewise constant interpolation $\rho_{\varepsilon_{k},h_{k}}: (0,\infty)\times \bR^d\rightarrow [0,\infty) $ as in \eqref{eq: time interpolation}. 

Then, as $k \rightarrow \infty$, with $N_k,h_k,\varepsilon_k$ abiding by Assumption \ref{eq: assumption on scaling}, we have 
\begin{equation}
    \rho_{\varepsilon_{k},h_{k}} \rightarrow \rho \quad\text{in}\quad L^1((0,T)\times \bR^d),
\end{equation}
where $\rho$ is a weak solution of the evolution equation \eqref{eq concrete non-linear diffusion equation}, with initial datum $\rho_0$, 
\begin{align}
\nonumber
\int_0^T\int_{\bR^d} \partial_t\varphi(t,x) \rho(t,x)dx\,dt=&-\int_{\bR^d}\varphi(0,x)\rho_0(x)dx+ \int_0^T \int_{\bR^{d}} \rho(t,x) \Big(  \Big\langle A \nabla f(x)  , \nabla \varphi (t,x) \Big\rangle dxdt
\\
&+\int_0^T \int_{\bR^{d}} \Big\langle A \nabla p(\rho(t,x)) ,\nabla \varphi (t,x) \Big\rangle  dxdt\quad\text{for all}\quad \varphi\in C_c^\infty(\bR\times \bR^d).
\end{align}
\end{prop}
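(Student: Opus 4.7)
The plan is to apply Theorem \ref{theorem MAIN THEOREM} directly, by verifying each of its hypotheses in the present setting. Assumption \ref{Assumption on potential and internal energy} on the free energy is given by hypothesis; Assumption \ref{assumption on b and A} is immediate ($b=0$ is trivially Lipschitz and $A$ is symmetric by hypothesis); and Assumption \ref{assumption on scaling} is a standing hypothesis. The substantive work therefore consists in checking Assumption \ref{assumption for the cost} for $c_h(x,y) = \langle (A+hI)^{-1}(x-y), x-y\rangle$ and producing a change of variable $\cT_h$ satisfying Assumption \ref{Assumption change of var for f and c - for regular.}.

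For the cost assumption, the key observation is that, since $A$ is positive semi-definite, $A+hI$ is symmetric positive definite for every $h>0$, so $c_h$ is well-defined, non-negative, continuous, and differentiable in $y$ with
$$\nabla_y c_h(x,y) = 2(A+hI)^{-1}(y-x).$$
The crux is the choice of $B_h$ in \eqref{equation assumption for the cost}. I would take $B_h := hI$, which is manifestly $O(h)$; then $\tilde{\eta} = (A+hI)\eta$, and the identity $(A+hI)^{-1}(A+hI) = I$ gives
$$\langle \nabla_y c_h(x,y), (A+hI)\eta \rangle = 2\langle y-x, \eta \rangle,$$
which, using $b=0$, matches the target $\langle 2(y-x) - 2h b(y), \eta \rangle$ \emph{exactly}. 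Hence the right-hand side of \eqref{equation assumption for the cost} is identically zero. The remaining conditions \eqref{eq 93}--\eqref{eq : cost lower bound} follow from elementary spectral estimates on $(A+hI)^{-1}$: its eigenvalues lie in $[1/(\|A\|+h),\,1/h]$, yielding non-negativity of $c_h$, the equivalence $\|x-y\|^2 \leq (\|A\|+h)\,c_h(x,y)$, and the gradient and quadratic upper bounds with $h$-dependent constants (which the assumption explicitly permits).

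For the change-of-variable assumption, the natural candidate is $\cT_h(x) := x$, which is smooth. Then
$$c_h(x, x+\sigma z) = \sigma^2 \langle (A+hI)^{-1}z,\, z\rangle \leq \frac{\sigma^2}{h}\|z\|^2,$$
which is absorbed by the right-hand side of \eqref{kramer : eq : equation 61, with external} for a suitable $\beta$, while \eqref{eq 81} follows immediately from the Lipschitz bound on $f$ (giving $|f(x+\sigma z) - f(x)| \leq C\sigma\|z\|$ and using $2\sigma\|z\| \leq \sigma(\|z\|^2+1)$). All the hypotheses of Theorem \ref{theorem MAIN THEOREM} being in place, the conclusion delivers convergence of the piecewise constant interpolation to a weak solution in the sense of Definition \ref{def: weak formulation general PDE}; a single integration by parts on the pressure term then recasts that formulation in the gradient form stated in the proposition.

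The main delicacy is the possible singularity of $A$, which causes $(A+hI)^{-1}$ to blow up like $1/h$ on the kernel of $A$. This is felt in the $h$-dependent constants $C(h)$ appearing in \eqref{eq 93} and \eqref{eq : cost upper bound}, and in the trade-off between $\sigma$ and $h$ in the change-of-variable estimate; however, the abstract framework of Theorem \ref{theorem MAIN THEOREM} is specifically designed to accommodate such $h$-dependence, and the scaling \eqref{eq: assumption on scaling} ensures these constants are absorbed in the limit, so the singularity does not obstruct the application.
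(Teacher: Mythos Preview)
Your proposal is correct and follows essentially the same approach as the paper's proof: both reduce to verifying the hypotheses of Theorem~\ref{theorem MAIN THEOREM} with the choices $B_h=hI$ and $\cT_h=\mathrm{id}$, using elementary spectral bounds on $(A+hI)^{-1}$ to check the cost conditions. The only cosmetic differences are that you compute $\nabla_y c_h$ explicitly and phrase the spectral argument slightly differently, but the logic and the key choices coincide.
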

The proof of the proposition is given in Appendix \ref{section appendix non-linear diffusion}.

\subsection{The non-linear kinetic Fokker-Planck (Kramers) equation}
\label{section example kramer}

 Let the dimension $d=2\tilde{d}$, and the vector field $b$ and diffusion matrix $A$ be given by
 
\begin{equation}\label{eq b and A for KFPE}
b(x,v)=\begin{pmatrix}
    -v
    \\
    \nabla_x g(x)
    \end{pmatrix},
    \qquad 
f(x,v)=f(v),
\qquad
A=
\begin{pmatrix}
0 & 0 
\\
0 & I
\end{pmatrix},  
\end{equation}
for some $g:\bR^{\tilde{d}}\to \bR$, and where, in the matrix $A$,  $I$ is the $\tilde{d}\times\tilde{d}$-dimensional identity matrix and $0$ stands for a $\tilde{d} \times \tilde{d}$-matrix of zeros. 
Substituting the above into \eqref{eq: general form of L*} one obtains the non-linear Kinetic FPE, 
\begin{equation}\label{eq non linear KFPE}
    \partial_t \rho=-\text{div}_x \Big( \rho v \Big)+\text{div}_{v}\Big(\rho\nabla_{x}g(x)\Big) +\text{div}_{v}\Big(\rho \nabla_{v}f(v)\Big) + \Delta_v p\big(\rho\big).
\end{equation}
If $p(\cdot)$ is the identity map, \eqref{eq non linear KFPE} reduces to the classical Kinetic FPE equation 

\begin{equation}\label{eq KFPE}
    \partial_t\rho =-\text{div}_x \Big( \rho  v \Big)+\text{div}_{v}\Big(\rho \nabla_{x}g(x)\Big) +\text{div}_{v}\Big(\rho \nabla_{v}f(v)\Big) + \Delta_v \rho,
\end{equation}

where $\rho$ describes the density of a Brownian particle with inertia 
\begin{align}\label{eq kinetic SDE}
    dX(t)&=V(t)dt,
    \\
    dV(t)&=-\nabla g(X(t)) dt - \nabla f(V(t))dt+\sqrt{2} dW(t).
    \nonumber
\end{align}
\textcolor{black}{This models the motion of a particle under the influence of three forces, an external force (the term $-\nabla g$), a frictional force (the term $-\nabla f$) and a stochastic noise captured by a $\tilde{d}$-dimensional Brownian Motion $W(t)$. The kinetic FPE \eqref{eq KFPE} contains both conservative and dissipative dynamics which can be easily understood from \eqref{eq kinetic SDE}. Ignoring the last two terms of \eqref{eq kinetic SDE} one has a Hamiltonian system with Hamiltonian energy $H(x,v)={\|v\|^2}/{2}+g(x)$. On the other hand, the frictional and noise terms are dissipative, modelling the collisions of the Brownian particle with the surrounding solvent. For a discussion on the applications of \eqref{eq KFPE} see \cite{risken1989fokker}, one of these applications being a simplified model of chemical reactions, which is the context in which Kramer \cite{kramers1940brownian} originally introduced it. In this paper, we will be interested in \eqref{eq non linear KFPE} for a non-linear pressure $p$, this can be derived via generalised thermodynamical theory \cite{ChavanisPRE2003}, motivated by the non-universality of the Boltzmann distribution. It has found applications in a wide variety of fields: physics, astrophysics, biology, \cite{chavanis2006nonlinear,chavanis2004chapman}. Because of the mixed dynamics the kinetic FPE is not a gradient flow. In addition, it is a degenerate diffusion due to the fact that the noise is present only in the velocity variable. Unregularised (one-step) variational approximation schemes for the linear kinetic FPE \eqref{eq KFPE} have been developed in \cite{duong2014conservative, Huang00}. A similar approach for the Vlasov–Poisson–Fokker–Planck systems was conducted in \cite{HuangJordan2000}. In addition, operator-splitting schemes, which consist of a transport (Hamiltonian flow) step and a steepest descent step, for \eqref{eq KFPE} have also been developed \cite{duong2014conservative, marcos2020solutions}, see also similar results for the non-linear non-local Fokker-Planck equation \cite{carlier2017splitting} and the Boltzmann equation \cite{carlen2004solution}.}


Since the pressure is incorporated into the free energy, using Theorem \ref{theorem MAIN THEOREM} one can develop a variational scheme for \eqref{eq non linear KFPE} using the cost functions derived in \cite{duong2014conservative}. Our extension of \cite{duong2014conservative} is twofold, firstly the scheme has been regularised, and secondly we allow for a non-linear pressure term $p$. Including regularisation and a non-linear pressure would make the calculations in \cite{duong2014conservative} more delicate, this added difficulty is incorporated via Theorem \ref{theorem MAIN THEOREM}. 

\begin{assumption}\label{assumption on hamiltonian force in KFPE}
Assume that $g\in C^{3}(\bR^{\tilde{d}})$ is \textcolor{black}{bounded from below} and there exists a constant $C>0$ for all $x_1,x_2\in \bR^{\tilde{d}}$,
\begin{align}
    \frac{1}{C}\| x_1-x_2 \|^2 \leq& \Big\langle x_1-x_2, \nabla g(x_1)-\nabla g(x_2) \Big\rangle, 
\label{eq assumption on hamiltonian force in KFPE eq 1}
    \\
    \|\nabla g(x_1)-\nabla g(x_2)\| \leq& C \|x_1-x_2\|, \label{eq assumption on hamiltonian force in KFPE eq 2}
    \\
    \| \nabla^2 g(x_1)\| , \| \nabla^3 g(x_1)\| \leq& C. \label{eq assumption on hamiltonian force in KFPE eq 3}
\end{align}
\textcolor{black}{We note that \eqref{eq assumption on hamiltonian force in KFPE eq 2}-\eqref{eq assumption on hamiltonian force in KFPE eq 3} implies that $g$ has quadratic growth at infinity. Without loss of generality we assume that $g\geq 0$ and $g(0)=0$, which} implies that for any $x\in\bR^{\tilde{d}}$
$$
\|\nabla g(x)\|\leq C\|x\|.
$$
\end{assumption}
We begin by proving the convergence of the entropy regularised scheme with the cost function  \cite[{E}q.~(13)]{duong2014conservative}. \textcolor{black}{As argued in \cite{duong2014conservative}, this cost function, which is derived from large deviation theory, naturally captures the conservative-dissipative coupling of the kinetic Fokker-Planck equation}. The proof of the following proposition is given in Appendix \ref{section appendix KFPE}.

\begin{prop}\label{proposition KFPE number one}
Let $A$, $b$ and $f$ be given by \eqref{eq b and A for KFPE}, with $g$ satisfying Assumption \ref{assumption on hamiltonian force in KFPE}. Define the free energy $\cF$ by \eqref{eq: general free energy} and let $f,p$ satisfy Assumption \ref{Assumption on potential and internal energy}. Let $\rho_0\in \cP^r_2(\bR^d)$ satisfy $\cF(\rho_0)<\infty$. 

Define the cost function $c_h : \bR^{2 d}\to \bR$  (\cite[{E}q.~(13)]{duong2014conservative}) 
\begin{align}
\nonumber
& c_h(x,v;x',v')
\\
&\quad :=
h\inf\Big\{ \int_0^h \|\ddot{\xi}(t)+\nabla g (\xi(t)) \|^2dt~:~\xi\in C^2( [0,h];\bR^d),~(\xi,\dot{\xi})(0)=(x,v),~ (\xi,\dot{\xi})(h)=(x',v')  \Big\}.
\label{eq KFPE COST 1}
\end{align}
Let $k\in\bN$ and take $\{\rho^{n}_{\varepsilon_{k},h_{k}}\}_{n=0}^{N_{k}}$ to 
be the solution of the entropy regularised scheme \eqref{eq: regularized scheme} with $c_h$ and $\cF$ defined above. Define the piecewise constant interpolation $\rho_{\varepsilon_{k},h_{k}}: (0,\infty)\times \bR^d\rightarrow [0,\infty) $ as in \eqref{eq: time interpolation}. Then, as $k \rightarrow \infty$, with $N_k,h_k,\varepsilon_k$ abiding by Assumption \ref{eq: assumption on scaling}, we have 
\begin{equation*}
    \rho_{\varepsilon_{k},h_{k}} \rightarrow \rho \quad\text{in}\quad L^1((0,T)\times \bR^d),
\end{equation*}
where $\rho$ is a weak solution of the evolution equation \eqref{eq non linear KFPE} with initial datum $\rho_0$, that is
\begin{align}
\int_0^T\int_{\bR^{d}} \partial_t\varphi \rho dx dv dt
=
& \int_0^T \int_{\bR^{d}} \Big(    \langle \nabla_x g +\nabla_v f , \nabla_v \varphi \rangle -\langle v , \nabla_x \varphi \rangle + \langle  \nabla_v p(\rho) ,\nabla_v \varphi \rangle \Big) \rho dxdvdt
\nonumber
\\
&-\int_{\bR^d}\varphi(0,x,v) \rho_0 dx dv \quad\text{for all}\quad \varphi\in C_c^\infty(\bR\times \bR^d).
\label{eq103}
\end{align}
\end{prop}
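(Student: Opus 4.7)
The proof proceeds by applying Theorem \ref{theorem MAIN THEOREM}, so it reduces to checking its hypotheses with the stated choices of $A$, $b$, $f$, $\cF$, and $c_h$. Assumption \ref{Assumption on potential and internal energy} on the free energy is imposed by hypothesis, Assumption \ref{assumption on scaling} on the scaling parameters is again by hypothesis, and Assumption \ref{assumption on b and A} is immediate: $A=\mathrm{diag}(0,I)$ is symmetric, and $b(x,v)=(-v,\nabla g(x))$ is Lipschitz as a consequence of \eqref{eq assumption on hamiltonian force in KFPE eq 2}. The genuine work lies in verifying Assumptions \ref{assumption for the cost} and \ref{Assumption change of var for f and c - for regular.} for the variational cost \eqref{eq KFPE COST 1}.

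For the change of variables in Assumption \ref{Assumption change of var for f and c - for regular.} I would take $\cT_h(x,v)$ to be (a truncated version of) the time-$h$ Hamiltonian flow of $\ddot\xi + \nabla g(\xi) = 0$ starting from $(x,v)$, or equivalently its polynomial surrogate $\cT_h(x,v) = \bigl(x + hv - \tfrac{h^{2}}{2}\nabla g(x),\ v - h\nabla g(x)\bigr)$. Since plugging an exact Hamiltonian curve into \eqref{eq KFPE COST 1} makes the integrand vanish, one has $c_h(x,v;\cT_h(x,v)) = o(1)$ in $h$; an admissible competitor between $(x,v)$ and $\cT_h(x,v)+\sigma z$ is obtained by gluing a $C^{2}$-bump of amplitude $\sigma$ supported near $t=h$ to the Hamiltonian trajectory. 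A direct computation of the cost of that perturbation (whose second derivative scales like $\sigma/h^{2}$) yields a bound of the form $C\sigma h^{-\beta}(\|z\|^{2}+1)$ for a suitable $\beta>0$, while the residual $h^{2}(\|x\|^{2}+1)$ term in \eqref{kramer : eq : equation 61, with external} comes from the quadratic growth of $g$ picked up along the Hamiltonian flow. Inequality \eqref{eq 81} is then immediate from the Lipschitz continuity of $f=f(v)$ and the identity $\cT_h(x,v)_{v} - v = -h\nabla g(x)$, combined with $\|\nabla g(x)\| \leq C\|x\|$.

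The substantive obstacle is the relation \eqref{equation assumption for the cost}. For the minimiser $\xi^{\ast}$ of \eqref{eq KFPE COST 1} the Euler--Lagrange equation is the fourth-order system
\begin{equation*}
    \frac{d^{2}}{dt^{2}}\bigl(\ddot\xi^{\ast} + \nabla g(\xi^{\ast})\bigr) + D^{2} g(\xi^{\ast})\bigl(\ddot\xi^{\ast} + \nabla g(\xi^{\ast})\bigr) = 0,
\end{equation*}
together with $(\xi^{\ast},\dot\xi^{\ast})(0) = (x,v)$ and $(\xi^{\ast},\dot\xi^{\ast})(h) = (x',v')$. A standard envelope calculation gives
\begin{equation*}
    \nabla_{v'} c_h = 2h\bigl(\ddot\xi^{\ast}(h) + \nabla g(x')\bigr),\qquad
    \nabla_{x'} c_h = -2h\,\tfrac{d}{dt}\bigl(\ddot\xi^{\ast} + \nabla g(\xi^{\ast})\bigr)(h).
\end{equation*}
To verify \eqref{equation assumption for the cost} I would Taylor-expand $\xi^{\ast}$ about $t = h$, using the boundary conditions at $t=h$ together with the Euler--Lagrange ODE to express $\ddot\xi^{\ast}(h)$ and $\dddot\xi^{\ast}(h)$ in terms of $x,v,x',v'$ modulo $O(h)$-corrections and a residual collinear with $\ddot\xi^{\ast}(h)+\nabla g(x')$. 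Comparing with $2(y-x) - 2hb(y) = \bigl(2(x'-x)+2hv',\, 2(v'-v) - 2h\nabla g(x')\bigr)$, the velocity component of \eqref{equation assumption for the cost} becomes essentially an identity, while the position component forces an $O(h)$ correction along the position block of $A$; this correction is exactly the matrix $B_h$. Taylor remainders then furnish the $O(h^{2})(1+\|\eta\|)(\|x\|^{2}+\|y\|^{2}+1)$ error, and the residual term proportional to $\ddot\xi^{\ast}(h)+\nabla g(x')$ is absorbed into $O(1)\,c_h$ via the a priori lower bound $\|(\ddot\xi^{\ast}+\nabla g(\xi^{\ast}))(h)\|^{2} \lesssim h^{-2}c_h$ obtained from regularity of $\xi^{\ast}$. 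The remaining bounds \eqref{eq 93}--\eqref{eq : cost lower bound} follow by testing the infimum in \eqref{eq KFPE COST 1} against a cubic Hermite interpolant between $(x,v)$ and $(x',v')$ and using \eqref{eq assumption on hamiltonian force in KFPE eq 2}--\eqref{eq assumption on hamiltonian force in KFPE eq 3}; non-negativity \eqref{eq : cost lower bound} and continuity are immediate from the form of the integrand.

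The principal difficulty, then, is the Taylor expansion and careful bookkeeping needed to identify $B_h$ and control the remainder in \eqref{equation assumption for the cost}; this is essentially the computation carried out in \cite{duong2014conservative} in the unregularised, linear-pressure setting, and the point of going through Theorem \ref{theorem MAIN THEOREM} is that the nonlinear pressure and the entropic regularisation can be handled abstractly once the cost bookkeeping is in place. Once Assumptions \ref{assumption for the cost} and \ref{Assumption change of var for f and c - for regular.} are established, Theorem \ref{theorem MAIN THEOREM} applies directly and gives the claimed $L^{1}((0,T)\times\bR^{d})$ convergence of $\rho_{\varepsilon_k,h_k}$ to a weak solution of \eqref{eq non linear KFPE} in the sense of \eqref{eq103}.
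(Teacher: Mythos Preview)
Your overall strategy matches the paper's: reduce to Theorem \ref{theorem MAIN THEOREM} and verify Assumptions \ref{assumption for the cost} and \ref{Assumption change of var for f and c - for regular.}, with the cost analysis inherited from \cite{duong2014conservative}. The implementation differs in two places worth flagging.

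For the change of variables you take (a truncation of) the Hamiltonian flow, $\cT_h(x,v)=(x+hv-\tfrac{h^{2}}{2}\nabla g(x),\,v-h\nabla g(x))$, and argue by gluing a $C^{2}$ bump to the Hamiltonian trajectory. The paper takes the simpler $\cT_h(x,v)=(x+hv,\,v)$, with no $\nabla g$ dependence, and tests the infimum in \eqref{eq KFPE COST 1} against the explicit cubic Hermite interpolant $\bar\xi(t)=x+vt+(\cdots)t^{2}+(\cdots)t^{3}$ between $(x,v)$ and $(x',v')$. Substituting $(x',v')=(x+hv-\sigma z,\,v-\sigma w)$ and using $\|\nabla g(\xi)\|\le C\|\xi\|$ then yields \eqref{kramer : eq : equation 61, with external} by direct estimation of $\|\ddot{\bar\xi}\|$ and $\|\bar\xi\|$. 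Your gluing would work, but the extra $\nabla g$ terms in $\cT_h$ are unnecessary and the cubic competitor is both explicit and cleaner.

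For \eqref{equation assumption for the cost} you outline the envelope/Taylor computation; the paper simply cites \cite{duong2014conservative}, recording the explicit choice
\[
B_h=\begin{pmatrix}-\tfrac{h^{2}}{6} & \tfrac{h}{2}\\[2pt] -\tfrac{h}{2} & 0\end{pmatrix}
\]
(block form) and the resulting identity for $\langle\nabla_{(x',v')}c_h,\tilde\eta\rangle$ with remainder terms $\tau_h,\sigma_h$ controlled by \cite[Eq.~(41)]{duong2014conservative}. Your sketch is consistent with this and would reproduce the same $B_h$. The remaining bounds \eqref{eq 93}--\eqref{eq : cost upper bound} and \eqref{eq : cost realted to euclidean cost} the paper likewise lifts directly from \cite{duong2014conservative} rather than rederiving them.
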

From a modelling perspective \eqref{eq KFPE COST 1} is the most natural choice for a cost, however it has no explicit expression and is therefore inconvenient for practical purposes. It has been shown that the explicit cost \cite[Eq.~(15)]{duong2014conservative}, which is an approximation of \eqref{eq KFPE COST 1}, can be implemented numerically \cite{caluya2019gradient}. We now argue that we can employ Theorem \ref{theorem MAIN THEOREM} to get the convergence of the entropic regularised scheme constructed with this cost too. The proof of the following proposition is given in Appendix \ref{section appendix KFPE}.
\begin{prop}\label{proposition KFPE number 2}
Let $A$, $b$ and $f$ be given by \eqref{eq b and A for KFPE}, with $g$ satisfying Assumption \ref{assumption on hamiltonian force in KFPE}. Define the free energy $\cF$ by \eqref{eq: general free energy} and let $f,p$ satisfy Assumption \ref{Assumption on potential and internal energy}. Let $\rho_0\in \cP^r_2(\bR^d)$ satisfy $\cF(\rho_0)<\infty$.

Define the cost function $c_h:\bR^{2d}\to \bR$ by \cite[Eq.~(15)]{duong2014conservative} that is 

\begin{equation}\label{eq cost function for kramer}
 c_h(x,v;x',v'):=\|v'-v+h \nabla g(x)\|^2+12\big\| \frac{x'-x}{h} -\frac{v'+v}{2} \big\|^2.     
\end{equation}

Let $k\in\bN$ and take $\{\rho^{n}_{\varepsilon_{k},h_{k}}\}_{n=0}^{N_{k}}$ to 
be the solution of the entropy regularised  scheme \eqref{eq: regularized scheme} with $c_h$ and $\cF$ defined above. Define the piecewise constant interpolation $\rho_{\varepsilon_{k},h_{k}}: (0,\infty)\times \bR^d\rightarrow [0,\infty) $ as in \eqref{eq: time interpolation}.

Then, as $k \rightarrow \infty$, with $N_k,h_k,\varepsilon_k$ abiding by Assumption \ref{eq: assumption on scaling}, we have 
\begin{equation*}
    \rho_{\varepsilon_{k},h_{k}} \rightarrow \rho \quad\text{in}\quad L^1((0,T)\times \bR^d),
\end{equation*}
where $\rho$ is a weak solution of the evolution equation \eqref{eq non linear KFPE} with initial datum $\rho_0$, that is \eqref{eq103} also holds true.
\end{prop}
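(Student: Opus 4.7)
The strategy is to invoke Theorem \ref{theorem MAIN THEOREM} by verifying its hypotheses for the explicit cost \eqref{eq cost function for kramer} and the data $(b,A)$ in \eqref{eq b and A for KFPE}. Assumption \ref{Assumption on potential and internal energy} is part of the hypothesis, the scaling Assumption \ref{assumption on scaling} is stipulated, and Assumption \ref{assumption on b and A} is immediate: $A$ is symmetric (and singular), while $b(x,v)=(-v,\nabla g(x))^{T}$ is Lipschitz by \eqref{eq assumption on hamiltonian force in KFPE eq 2}. Thus only Assumptions \ref{assumption for the cost} and \ref{Assumption change of var for f and c - for regular.}, concerning the cost and the change of variable, require dedicated verification.

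The structural portion of Assumption \ref{assumption for the cost} is elementary. Continuity, differentiability in $y$, the non-negativity \eqref{eq : cost lower bound}, the upper bound \eqref{eq : cost upper bound}, and the gradient bound \eqref{eq 93} all follow by expanding the squares in \eqref{eq cost function for kramer} and using $\|\nabla g(x)\|\leq C\|x\|$. For the Euclidean-control inequality \eqref{eq : cost realted to euclidean cost}, set $u:=(x'-x)/h-(v'+v)/2$ and note that $x'-x = hu + h(v'+v)/2$, so that $\|x'-x\|^{2} \leq 2h^{2}\|u\|^{2} + h^{2}\|v'+v\|^{2}/2 \leq C\, c_{h} + Ch^{2}(\|x\|^{2}+\|y\|^{2})$; a symmetric bound on $\|v'-v\|^{2}$ comes directly from the first term of $c_{h}$ together with $\|\nabla g(x)\| \leq C\|x\|$.

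The decisive step is the algebraic identity \eqref{equation assumption for the cost}. Direct computation gives $\nabla_{x'}c_{h} = (24/h)\, u$ and $\nabla_{v'}c_{h} = 2w - 12u$, where $w:=v'-v+h\nabla g(x)$, and the target vector is $2(y-x)-2hb(y) = \bigl(2(x'-x)+2hv',\,2(v'-v)-2h\nabla g(x')\bigr)$. One selects a block matrix $B_{h}=O(h)$ coupling the position and velocity slots of $\eta$ so that $\langle \nabla_{y}c_{h},(A+B_{h})\eta\rangle$ matches the target modulo the tolerance permitted on the right-hand side of \eqref{equation assumption for the cost}. The residual terms split into two kinds: those linear in $u$ and $w$, which are absorbed into $O(1)\, c_{h}$ by Cauchy--Schwarz using $\|u\|^{2}+\|w\|^{2} \leq C\, c_{h}$; and those of the form $h\bigl(\nabla g(x')-\nabla g(x)\bigr)$ (together with quadratic-in-$h$ contributions from $\nabla g(x)$), which, via the $C^{2}$-bound on $g$ from Assumption \ref{assumption on hamiltonian force in KFPE} and the estimate \eqref{eq : cost realted to euclidean cost}, fit into $O(h^{2})(1+\|\eta\|)(\|x\|^{2}+\|y\|^{2}+1)$.

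For Assumption \ref{Assumption change of var for f and c - for regular.}, the natural choice of change of variable is the leading-order Hamiltonian flow $\mathcal{T}_{h}(x,v):=(x+hv,\,v-h\nabla g(x))$: direct substitution yields $c_{h}(x,v;\mathcal{T}_{h}(x,v)) \leq C h^{2}\|x\|^{2}$, and a short expansion of $c_{h}(x,v;\mathcal{T}_{h}(x,v)+\sigma z)$ delivers \eqref{kramer : eq : equation 61, with external}, while \eqref{eq 81} follows from the Lipschitz property of $f$ together with $\|\mathcal{T}_{h}(x,v)-(x,v)\| \leq Ch(1+\|x\|)$. The principal obstacle is the identification of $B_{h}$ in \eqref{equation assumption for the cost}: because the drift $b$ is non-gradient and $A$ acts only on the velocity block, $B_{h}$ must be non-diagonal, and the careful bookkeeping of residuals --- relegated to Appendix \ref{section appendix KFPE} --- requires both Taylor expansion of $\nabla g$ and the structural estimate \eqref{eq : cost realted to euclidean cost} in order to close against $O(1)\, c_{h}$ and $O(h^{2})(\|x\|^{2}+\|y\|^{2}+1)$.
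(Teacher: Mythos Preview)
Your proposal is correct and follows the same route as the paper: both reduce the proposition to Theorem \ref{theorem MAIN THEOREM} by verifying Assumptions \ref{assumption for the cost} and \ref{Assumption change of var for f and c - for regular.} for the explicit cost \eqref{eq cost function for kramer}. The only differences are cosmetic: the paper writes down the explicit matrix $B_h=\begin{pmatrix}-h^2/6 & h/2\\ -h/2 & 0\end{pmatrix}$ and defers the verification of \eqref{equation assumption for the cost} to \cite[Eq.~(60)]{duong2014conservative} rather than sketching the residual analysis, and it uses the simpler change of variable $\mathcal{T}_h(x,v)=(x+hv,v)$ (omitting your velocity shift $-h\nabla g(x)$), which is equally valid for Assumption \ref{Assumption change of var for f and c - for regular.}.
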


\subsection{A degenerate diffusion equation of Kolmogorov-type}\label{section example degenerate}

Let $\tilde{d},n\in \bN$, and denote $\x=\begin{pmatrix}
    x_1,
    x_2,
    \ldots,
    x_{n-1},
    x_n
    \end{pmatrix}^T$, where $x_i\in \bR^{\tilde{d}}$. Set
 $d=\tilde{d}n$, and 
\begin{equation}\label{eq coef of degenerate diffusion}
b(\rx)=-(
    x_2
    ,
    x_3
    ,
    \ldots
   ,
    x_n
    ,
    0
    )^T,
~~~~
~~~~
A=
\begin{pmatrix}
0 & 0
\\
0 & I
\end{pmatrix},
~~~~
~~~~
f(\rx)=f(x_n),
\end{equation}
where, in the matrix $A$,  $I$ is the $\tilde{d}\times \tilde{d}$-dimensional identity matrix and $0$ stands for a $\tilde{d}(n-1) \times \tilde{d}(n-1)$-matrix of zeros. Then \eqref{eq: general form of L*} reduces to the following non-linear degenerate diffusion equation of Kolmogorov type
\begin{equation}
    \label{eq: gKramers}
\partial_t\rho(t,x_1,\ldots,x_n)=-\sum_{i=2}^{n}\div_{x_{i-1}}(x_{i}\rho)+\div_{x_n}(\nabla f(x_n)\rho)+\Delta_{x_n}p(\rho),
\end{equation}
for which, using Theorem \ref{theorem MAIN THEOREM}, a weak solution will be shown to exist as the limit of a regularised variational scheme. 

To gain insight into choosing an appropriate cost function we consider the linear case where $p(\cdot)$ is the identity. In this case \eqref{eq: gKramers} becomes
\begin{equation}
\label{eq: gKramers linear}
\partial_t\rho(t,x_1,\ldots,x_n)=-\sum_{i=2}^{n}\div_{x_{i-1}}(x_{i}\rho)+\div_{x_n}(\nabla f(x_n)\rho)+\Delta_{x_n}\rho,
\end{equation}
which is the forward Kolmogorov equation of the associated stochastic differential equations
\begin{align}
&d\xi_1=\xi_2\,dt\nonumber
\\&d\xi_2=\xi_3\,dt\nonumber
\\&\quad\vdots\label{eq: SDE1}
\\&d\xi_{n-1}=\xi_{n}\,dt\nonumber
\\&d\xi_n=-\nabla f(\xi_n)\,dt +\sqrt{2}\, dW(t),\nonumber
\end{align}
where $W(t)$ is a $\tilde{d}$-dimensional Wiener process. The above system describes a system of $n$ coupled oscillators, each of them moving vertically and being connected to their nearest neighbours, the last oscillator being forced by \textcolor{black}{a friction} and a random noise. Of course the simplest cases of $n=1,n=2$ correspond to the heat equation and Kramers equation (with no background potential) respectively.  When $n>2$ these type of equations arise as models of simplified finite Markovian approximations of generalised Langevin dynamics  \cite{ottobre2011asymptotic}, %
or harmonic oscillator chains \cite{bodineau2008large,delarue2010density}.

Recently \cite{DuongTran18} showed that the fundamental solution to \eqref{eq: gKramers linear} is determined by the following minimisation problem 
\begin{equation}
\label{eq: Ch cost}
c_h(\rx, \ry):=h \inf\limits_{\xi}\int_0^h\|{\xi}^{(n)}(s)\|^{2}\,ds,
\end{equation}
where $\mathbf{x}=(x_{1},\ldots,x_{n})\in\bR^{\tilde{d}n},\mathbf{y}=(y_1,\ldots,y_n)\in\bR^{\tilde{d}n}$ and the infimum is taken over all curves $\xi\in C^{n}([0,T];\bR^{d})$ that satisfy the boundary conditions
\begin{equation}
\label{eq: boundary conditions}
(\xi,\dot{\xi},\ldots,\xi^{(n-1)})(0)=(x_1,x_{2},\ldots,x_n)\quad\text{and}\quad (\xi,\dot{\xi},\ldots,\xi^{(n-1)})(h)=(y_1,y_{2},\ldots,y_{n}).
\end{equation}

The optimal value $c_h(\rx,\ry)$ is called the \textit{mean squared derivative cost function} and has been found to be useful in the modelling and design of various real-world systems such as motor control, biometrics, online-signatures and robotics, see \cite{DuongTran17} for further discussion. 

Theorem \cite[Theorem 1.2]{DuongTran17} states that the mean square derivative cost function $c_h(\rx,\ry)$ can be written in the explicit form, \begin{align}\label{eq mean square derivative cost function}
c_h(\rx,\ry)=\,h^{2-2n}\,[\mathbf{b}(h,\rx,\ry)]^{T}\cM\mathbf{b}(h,\rx,\ry),
\end{align}
where $\mathbf{b}: \bR^+\times \bR^{2\tilde{d}n} \to \bR^{\tilde{n}d} $ and $\cM\in \bR^{2\tilde{d}n}$ are explicitly given by \eqref{eq: b and M}. Using this explicit form of the cost function, \cite[Theorem 1.4]{DuongTran18} proved the convergence of an un-regularised variational scheme to the weak solution of \eqref{eq: gKramers linear}.

In the following proposition we use the cost \eqref{eq mean square derivative cost function} to construct a variational scheme for the highly degenerate non-linear PDE \eqref{eq: gKramers}, the proof of which is in Appendix \ref{section appendix degenerate}.  Our contributions are again twofold, firstly we allow for a non-linear $p$, and secondly our scheme is regularised. 
\begin{prop}\label{proposition Kolmogorov type}
Let $A$, $f$, and $b$ be given by \eqref{eq coef of degenerate diffusion}, with $f$ satisfying Assumption \ref{Assumption on potential and internal energy}. Define $\cF$ by \eqref{eq: general free energy}. Let $\rho_0\in \cP^r_2(\bR^d)$ satisfy $\cF(\rho_0)<\infty$. Define the cost function $c_h$ by \eqref{eq mean square derivative cost function}. 

Let $k\in\bN$ and take $\{\rho^{n}_{\varepsilon_{k},h_{k}}\}_{n=0}^{N_{k}}$ to 
be the solution of the entropy regularised  scheme \eqref{eq: regularized scheme} with $c_h$ and $\cF$ defined above. Define the piecewise constant interpolation $\rho_{\varepsilon_{k},h_{k}}: (0,\infty)\times \bR^d\rightarrow [0,\infty) $ as in \eqref{eq: time interpolation}.

Then, as $k \rightarrow \infty$, with $N_k,h_k,\varepsilon_k$ abiding by Assumption \ref{eq: assumption on scaling}, we have 
\begin{equation*}
    \rho_{\varepsilon_{k},h_{k}} \rightarrow \rho \quad\text{in}\quad L^1((0,T)\times \bR^d),
\end{equation*}
where $\rho$ is a weak solution of the evolution equation \eqref{eq: gKramers}, with initial datum $\rho_0$, 
\begin{align*}
\int_0^T\int_{\bR^{d}} \partial_t\varphi \rho d\x dt=& \int_0^T \int_{\bR^{d}} \Big( - \sum_{i=2}^n \langle  x_i , \nabla_{x_{i-1}} \varphi  \rangle + \langle \nabla_{x_{n}} f(x_{n}) , \nabla_{x_{n}} \varphi \rangle + \langle \nabla_{x_{n}} p(\rho), \nabla_{x_{n}} \varphi \rangle \Big)\rho  d\x dt
\\
&-\int_{\bR^{d}}\varphi(0,\x) \rho_0 d\x\quad\text{for all}\quad \varphi\in C_c^\infty(\bR\times \bR^{d}). 
\end{align*}
\end{prop}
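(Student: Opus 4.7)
The plan is to reduce Proposition \ref{proposition Kolmogorov type} to an application of Theorem \ref{theorem MAIN THEOREM}. The existence of the free energy functional (Assumption \ref{Assumption on potential and internal energy}) is given, and Assumption \ref{assumption on b and A} is immediate from the form of $b$ and $A$ in \eqref{eq coef of degenerate diffusion}: $A$ is clearly a constant symmetric matrix, and $b$ is linear hence Lipschitz. What remains is to verify Assumption \ref{assumption for the cost} for the cost \eqref{eq mean square derivative cost function} and to construct a suitable change of variables $\cT_h$ for Assumption \ref{Assumption change of var for f and c - for regular.}. All of these verifications hinge on exploiting the explicit quadratic form $c_h(\rx,\ry)=h^{2-2n}\,\mathbf{b}(h,\rx,\ry)^{T}\cM\,\mathbf{b}(h,\rx,\ry)$ together with the asymptotic expansion of $\mathbf{b}(h,\rx,\ry)$ in powers of $h$ developed in \cite{DuongTran17,DuongTran18}.

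First I would verify the simpler conditions: non-negativity \eqref{eq : cost lower bound} follows from positive semi-definiteness of $\cM$; continuity and differentiability of $\ry \mapsto c_h(\rx,\ry)$ follow from the fact that $\mathbf{b}$ is a polynomial in the entries of $\rx,\ry$ with coefficients that are rational in $h$; the quadratic upper bound \eqref{eq : cost upper bound} comes from the quadratic structure in $(\rx,\ry)$; and the bound \eqref{eq 93} on $\|\nabla_y c_h\|$ follows similarly since $\nabla_y c_h$ is linear in $(\rx,\ry)$ with coefficients depending on $h$. The lower bound \eqref{eq : cost realted to euclidean cost} is the trickiest of this group because of the degeneracy: one must show that even though $\cM$ mixes components of $\mathbf{b}$, the combination is coercive enough when augmented with the $h^2(\|\rx\|^2+\|\ry\|^2)$ correction, which can be extracted from the explicit form of $\mathbf{b}$ that incorporates the cascading drift.

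The main obstacle, and the heart of the proof, is the identification of the matrix $B_h=O(h)$ such that \eqref{equation assumption for the cost} holds. The strategy is to Taylor-expand $\nabla_y c_h(\rx,\ry)$ in $h$ using the explicit form of $\mathbf{b}(h,\rx,\ry)$: the leading asymptotic behaviour of $\nabla_y c_h$ must reproduce $2(\ry-\rx)-2hb(\ry)=2(\ry-\rx)+2h(y_2,\ldots,y_n,0)^{T}$ when tested against vectors of the form $\tilde{\eta}=(A+B_h)\eta$. Since $A$ projects onto the last block, $B_h$ must be chosen to restore, to leading order, the cascading structure $\partial_{y_i}c_h \sim 2(y_{i+1}-x_{i+1}) + \text{drift corrections}$ for $i<n$ while leaving the $y_n$-block close to the identity. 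One reads off such a $B_h$ from the Taylor expansion of $\mathbf{b}$ about $h=0$, and then all remainder terms are shown to be $O(h^2)$ times quadratic growth in $(\rx,\ry)$, absorbable into the right-hand side of \eqref{equation assumption for the cost}; terms proportional to $c_h(\rx,\ry)$ are absorbed into the $O(1) c_h(\rx,\ry)$ slack. This is the principal technical step and parallels, with extra care for the regularisation tolerance, the corresponding computation already performed in \cite{DuongTran18} for the un-regularised scheme.

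Finally, for Assumption \ref{Assumption change of var for f and c - for regular.} I would take the natural Hamiltonian-type transport associated to the drift, namely $\cT_h(\rx):=(x_1+h x_2, x_2+h x_3,\ldots, x_{n-1}+h x_n, x_n)^{T}$, so that $\cT_h(\rx)-\rx=-h b(\rx)$. Substituting into $c_h(\rx,\cT_h(\rx)+\sigma z)$ and again using the explicit quadratic form, the leading $O(h^0)$ contribution cancels (since $\cT_h$ transports $\rx$ along the drift), leaving a remainder controlled by $h^2(\|\rx\|^2+1)$ plus a contribution of order $\sigma h^{-\beta}(\|z\|^2+1)$ coming from the perturbation $\sigma z$ through the factors $h^{2-2n}$ in $c_h$; this yields \eqref{kramer : eq : equation 61, with external} for an appropriate $\beta>0$ (depending on $n$). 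Inequality \eqref{eq 81} follows immediately from the Lipschitz assumption on $f$ together with the fact that $\cT_h(\rx)-\rx$ is $O(h)\|\rx\|$. With all assumptions verified, Theorem \ref{theorem MAIN THEOREM} yields the stated $L^1$ convergence to the weak solution of \eqref{eq: gKramers}.
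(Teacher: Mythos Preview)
Your overall plan matches the paper exactly: reduce to Theorem \ref{theorem MAIN THEOREM} by verifying Assumptions \ref{Assumption on potential and internal energy}--\ref{assumption on scaling}. Your treatment of Assumption \ref{assumption for the cost} is also essentially the paper's, the only difference being that the paper does not ``read off $B_h$ from a Taylor expansion'' but instead defines $A+B_h:=\mathcal{K}_h=h^{2n-2}(J_2^T\cM J_1)^{-1}$ directly and then checks, via the explicit entries of $\mathcal{K}_h$ (Lemma \ref{lem:K}) and of $J_2^{-1}J_1$ (Lemma \ref{lem: InverH2H1}), that $B_h=O(h)$ and that \eqref{equation assumption for the cost} holds. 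Your description is compatible with this but less concrete.

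There is, however, a genuine gap in your verification of Assumption \ref{Assumption change of var for f and c - for regular.}. Your proposed map $\cT_h(\rx)=(x_1+hx_2,\ldots,x_{n-1}+hx_n,x_n)$ is only the first-order Euler step along $-b$, and for $n\geq 3$ it does \emph{not} make $c_h(\rx,\cT_h(\rx))$ small enough. Writing $\mathbf{b}(h,\rx,\ry)=J_1\ry-J_2\rx$, one computes $\big(J_1\cT_h(\rx)-J_2\rx\big)_i=-\sum_{j\geq i+2}\tfrac{h^{j-1}}{(j-i)!}x_j$, whose first component is of order $h^{2}\|\rx\|$. Hence $c_h(\rx,\cT_h(\rx))=h^{2-2n}\mathbf{b}^T\cM\,\mathbf{b}\sim h^{6-2n}\|\rx\|^{2}$, which for $n\geq 3$ is \emph{not} bounded by $Ch^{2}(\|\rx\|^{2}+1)$ as \eqref{kramer : eq : equation 61, with external} requires (and for $n\geq 4$ it blows up). Your claim that ``the leading $O(h^0)$ contribution cancels'' is correct but insufficient: the residual terms, after the $h^{2-2n}$ prefactor, are too large.

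The paper instead uses the \emph{exact} flow of the nilpotent drift, namely $\cT_h(\rx)=\hat{\rx}$ with $\hat{x}_i=\sum_{j=i}^{n}\tfrac{h^{j-i}}{(j-i)!}x_j$. This choice satisfies $J_1\cT_h(\rx)=J_2\rx$ identically, so $\mathbf{b}(h,\rx,\cT_h(\rx))=0$ and therefore $c_h(\rx,\cT_h(\rx)+\sigma\rz)=h^{2-2n}\sigma^{2}\rz^{T}J_1^{T}\cM J_1\rz\leq C h^{2-2n}\sigma^{2}\|\rz\|^{2}$, which gives \eqref{kramer : eq : equation 61, with external} with $\beta=2n-2$ and no $\|\rx\|^{2}$ term at all. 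Once you replace your Euler step by this exact polynomial flow, the remainder of your argument (including the use of the Lipschitz property of $f$ for \eqref{eq 81}) goes through.
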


\textcolor{black}{
\begin{rem}
As mentioned in the introduction, all examples considered in the present paper can be cast into the GENERIC framework which describes evolution equations containing both reversible dynamics and irreversible dynamics \cite{duong2021nonreversible,Duong2013,Kraaij2020}. Due to the splitting structure, a possible alternative approach to address GENERIC systems is to construct operator-splitting schemes. Such a scheme would consist of two steps: a Hamiltonian flow step and a gradient flow (minimising movement/steepest descent) step. For evolution equations in the Wasserstein space of probability measures we expect that one would need to combine the Hamiltonian flow theory developed in \cite{Ambrosio2008} (for the first step) and the gradient flow theory \cite{ambrosio2008gradient,jordan1998variational} (for the second step). This would be a challenging problem, but see \cite{carlen2004solution, carlier2017splitting,duong2014conservative,marcos2020solutions} and our recent preprint \cite{adams2021operator} for initial attempts in this direction.
\end{rem}
}


\section{An Illustrative Numerical Experiment}
\label{sec:numerical experiment}

    

We illustrate our findings with a numerical implementation of our algorithm applied to the Kramers equation of Section \ref{section example kramer}. The matrix scaling algorithm that we use is inspired by the work \cite{carlier2017convergence,cuturi2013sinkhorn,peyre2015entropic}, which are based on entropic regularisation. Our simulations (and their quality) are on par with other results found in the literature \cite{caluya2019gradient,carlier2017convergence}.

\subsection{Discretisation and the matrix scaling algorithm}\label{Section numerics discretisation}

We first carry out a discretisation and rewriting of our general scheme \eqref{eq: regularized scheme} into a form which lends itself amenable to a numerical implementation. For a chosen $M\in\mathbb{N}$ we consider some discrete points $\{x_i\}_{i=1}^M\subset\bR^d$, which are assumed to form a uniform grid in $\bR^d$, with each grid tile having volume $\lambda>0$.

 We consider discrete probability measures $\rho$ on $\bR^d$ fully supported on this grid, which are identified by their one-to-one correspondence with the probability simplex 
\begin{equation*}
    \Sigma^M:=\Bigg\{\rho\in\bR^M_+:\sum_{i=1}^M \rho_i=1\Bigg\}.
\end{equation*}
Note the small abuse of notation where the symbol $\rho$ denotes the discrete probability measure and its corresponding element in $\Sigma^M$. The density approximation of a discrete measure $\rho$ is then taken with respect to the discrete Lebesgue measure $\Lambda:=\lambda\sum_{i=1}^M \delta_{x_i}$, and is given by the vector $\frac{1}{\lambda}\rho$.

The discrete approximation of the regularised optimal transport problem \eqref{eq: regularized cost functional} is then defined as, for any $\mu,\nu\in\Sigma^M$, 
\begin{equation}
    \overline{\cW}_{c_{h},\varepsilon}(\mu,\nu):=\inf_{\pi\in \bR^{M\times M}_+} \Bigg\{ \sum_{i,j=1}^M (c_{h})_{i,j}\pi_{i,j}+\varepsilon \pi_{i,j}\log\Big(\frac{\pi_{i,j}}{\lambda^2}\Big)~:~\pi\mathbbm{1}=\mu,\pi^T\mathbbm{1}=\nu \Bigg\},
\end{equation}
where, of course, $(c_{h})_{i,j}=c_{h}(x_i,x_j)$ and $\mathbbm{1}=(1,\ldots,1)^T\in\bR^M$. With this in hand, our discrete approximation to the JKO scheme \eqref{eq: regularized scheme} becomes: given $\varepsilon,h>0$,  and some $\rho_{h,\varepsilon}^{0}\in \Sigma^M$, then, for $n=1,\ldots,N$ with $h$ such that $hN=T$, $\rho_{h,\varepsilon}^{n}$ determined iteratively as the unique minimiser of the following (discrete version of \eqref{eq: regularized scheme})
\begin{equation}
\label{eq: regularized scheme discrete}
      \min_{\rho\in \Sigma^M} \frac{1}{2h}\overline{\cW}_{c_h,\varepsilon}(\rho^{n-1}_{h,\varepsilon},\rho)+\overline{\cF}(\rho),
\end{equation}
where $\overline{\cF}(\rho):=\sum_{i=1}^M f(x_i)\rho_i+\lambda u\big({\rho_i}/{\lambda}\big)$, since $u$ acts on the density of $\rho$ with respect to discrete Lebesgue measure. Define the Gibbs Kernel $K\in\bR^{M\times M}$ by $K_{i,j}=\exp(-\frac{c_{h}(x_i,x_j)}{\varepsilon})$. Next, due to the entropic regularisation, we can make the well-known and celebrated observation \cite{peyre2015entropic} that \eqref{eq: regularized scheme discrete} can be reformulated by iteratively taking $\rho^n_{h,\varepsilon}=\pi^T \mathbbm{1}$, where $\pi$ minimises 
\begin{equation}\label{eq: scaling algorithm}
    \min_{\pi\in \bR^{M\times M}_+} \text{KL}(\pi||K)+\cG_n(\pi\mathbbm{1})+\frac{2h}{\varepsilon}\overline{\cF}(\pi^T \mathbbm{1}),
\end{equation}
where $\text{KL}(\pi||K):=\sum_{i,j}^M\pi_{i,j}\log\big(\frac{\pi_{i,j}}{K_{i,j}}\big)-\pi_{i,j}+K_{i,j}$ stands for the Kullback-Leibler divergence (KL divergence), and 
\begin{equation*}
    \cG_n(\rho):=\begin{cases}0&~\text{if}~\rho=\rho^{n-1}_{h,\varepsilon}
    \\
    \infty& ~\text{otherwise}.
    \end{cases}
\end{equation*}

Problems taking the form \eqref{eq: scaling algorithm} can be tackled by highly parallelizable matrix scaling algorithms \cite[Algorithm 1]{chizat2018scaling}; these are a generalisation of the Sinkhorn algorithm. Moreover, for the energy functional $\cF$ that we consider, there exist relatively simple 
formulas for the computation of the projections that appear in \cite[Algorithm 1]{chizat2018scaling}. It should be noted that \cite{chizat2018scaling} considers general measure spaces, where the product measure is taken as a reference in the KL divergence. 
Since we consider a uniform grid,  for us, the discrete KL divergence with respect to the product discrete Lebesgue measure is the appropriate approximation to the continuous KL divergence. Hence, the reference measures $d\rx,d\ry$ in \cite{chizat2018scaling} can be ignored in our case as our Gibbs kernel already has the mass factors multiplying it.

\subsection{Numerical simulation of Kramers equation}
We now provide the results of our simulations for Kramers equation using a form of \cite[Algorithm 1]{chizat2018scaling} re-cast to solve minimisation problems of the type of \eqref{eq: scaling algorithm}. Note that in comparison with \cite[Section V]{caluya2019gradient} we consider a different model, and employ a different spatial discretisation for which we use a uniform grid while they use grid-points as given by the forward simulated paths (a random space grid). We study this particular equation as we have access to its explicit solution and hence we are able to quantify the scheme's error. We point out that until our work (Proposition \ref{proposition KFPE number 2}), the scheme used in \cite[Section V]{caluya2019gradient} was not theoretically justified.

The dynamics is studied in dimension $2$ and without an external potential, i.e., we consider \eqref{eq non linear KFPE} with $p$ the identity, $g=0$, and $f(v)=\frac{v^2}{2}$. That is we solve
\begin{align}
\label{eq simulation KFPE}
    &\partial_t \rho(t,x,v)=-v\partial_x \rho(t,x,v)   +\partial_{v}\big(\rho(t,x,v)v \big) + \partial^2_v \rho(t,x,v).
\end{align}
If we consider the sharp initial condition   $\rho(0,x,v)=\delta(x-x_0)\delta(v-v_0)$ for some $x_0,v_0 \in \bR$, then, defining 
\begin{align*}
    &S_1(t)= (1-e^{-2  t}),~ S_2(t)=(1-e^{- t})^2,
    \\
    &S_3(t)= 2 t-3 + 4 e^{- t}-e^{-2  t},
    \\
    &\delta_1 (x,t)=x-\big(x_0+v_0(1-e^{- t})\big), ~ \delta_2 (v,t)=v-v_0e^{- t},
\end{align*}
the Green function of \eqref{eq simulation KFPE} is (see \cite{balakrishnan2008elements}) 
\begin{equation}
    \rho_{\text{exact}}(t,x,v)=\frac{1}{2\pi  \sqrt{S_1 S_3-S_2^2}}\exp\Big\{-\frac{S_1\delta_1^2-2S_2\delta_1\delta_2+S_3\delta_2^2}{2( t-2+4e^{- t}-(t+2)e^{-2 t})}\Big\}.
\end{equation}

To avoid the Dirac singularity at $t=0$ we offset the initial time, i.e., we equip \eqref{eq simulation KFPE} with the initial condition $\rho(0)=\rho_{\text{exact}}(t_0)$ for some $t_0>0$. We simulate the entropy regularised scheme with initial condition $\rho_{\text{exact}}(t_0)$. The simulations are run on a fixed discretised grid of $[-0.5,0.5]\times[-2.4,2.4]$, using $200\times 130$ points equidistant apart, using the discretised scheme described in Section \ref{Section numerics discretisation} across three different choices of regularisation parameter $\varepsilon=0.5,0.09,0.05$. 
 %
The approximation at time $t$ is compared to the exact solution $\rho_{\text{exact}}(t+t_0)$ via the  $L^1(\Lambda)$-norm (we compare integral of the absolute value of the difference of joint densities with respect to the discrete Lebesgue measure $\Lambda$, for $\lambda=\frac{4.8}{26000}$). 

Figures \ref{figure marginals} shows the evolution of the position and velocity marginals. 
 The well-known effect of blurring on the optimal transport problem stemming from regularisation \cite{peyre2019computational} is also clear from these figures: as the regularisation increases the mass is forced to spread out. Moreover, there is a roughness, especially in the velocity marginal, which disappears as the regularisation is increased (this smooths the kink) and/or the number of grid points are increased (this reduces numerical underflow and increases overall precision, see below). 
 The latter suggests why the kink is more apparent in the velocity marginal - it is supported on a larger domain and hence requires a finer grid spacing. 
 However, this has to be balanced against the (high) computational effort induced by performing optimal transport in higher dimensions. 
For our algorithm, we are forced to have a fine grid spacing in the position component to counterbalance the $h$ appearing in the cost function (and to capture the speed of diffusion). Matching this grid spacing also in the velocity component is computationally prohibitive (with our implementation). 

Figure \ref{fig error to exact fixed epsilon decreasing h} gives a quantitative analysis of the error between our scheme and the exact solution $\rho_{\text{exact}}$ (the joint density) as a function of time. As anticipated the error reduces as the entropic blurring is decreased, and the error increases with time.

   \begin{figure}[ht]
    \centering
    \includegraphics[scale=1]{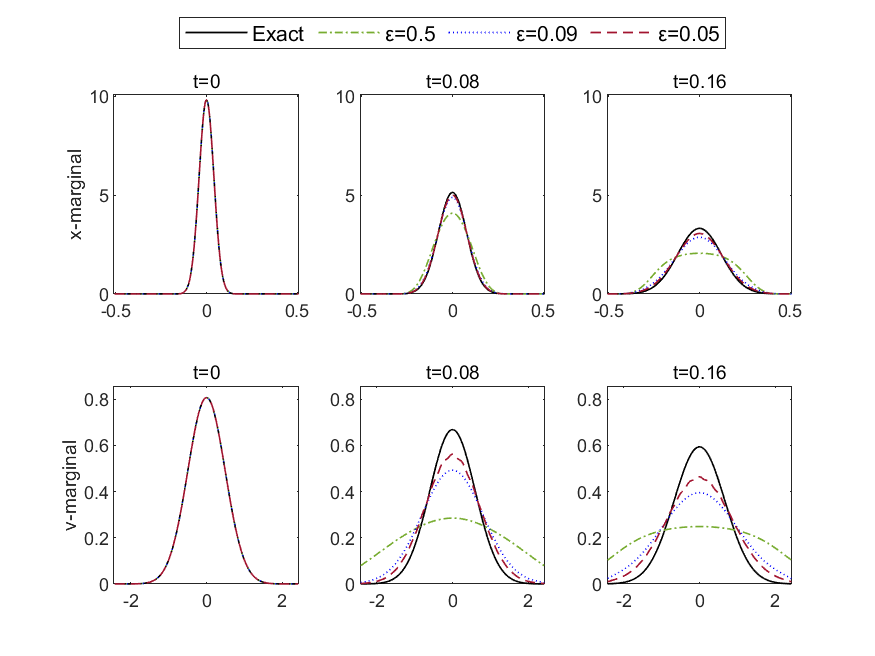}
    \caption{Comparison between the exact solution (black line) and our entropy regularised scheme for the position $x$-marginal and velocity $v$-marginal, across three time-slices $t=0,0.08,0.16$ and three regularisation choices $\varepsilon=0.5,0.09,0.05$. Simulation over the position-velocity domain $[-0.5,0.5]\times[-2.5,2.5]$. 
    All cases are ran with a step-size of $h=0.02$.
    }
    \label{figure marginals}
    \end{figure}

    \begin{figure}[th]
       \centering
    \includegraphics[scale=0.7]{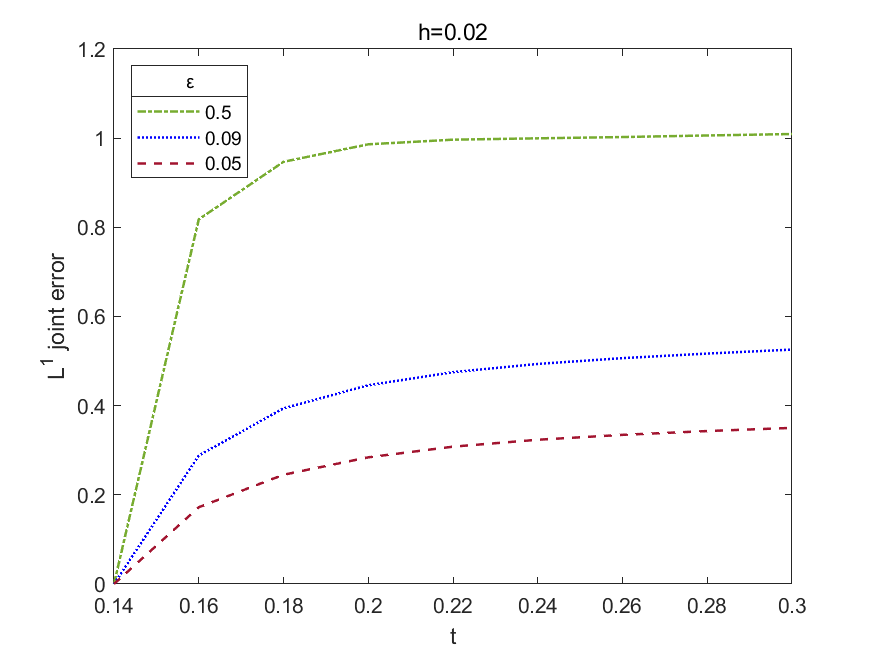}
    \caption{$L^1(\Lambda)$-norm joint error of the regularised scheme as a map of time over $[0.14,0.3]$ for multiple regularisation parameters $\varepsilon=0.5,0.09,0.05$. Simulation over the position-velocity domain $[-0.5,0.5]\times[-2.5,2.5]$. 
    All cases are ran with a step-size of $h=0.02$.}
     \label{fig error to exact fixed epsilon decreasing h}
    \end{figure}

We now discuss some of the drawbacks of the numerical implementation of this JKO type scheme. As pointed out already, regularisation introduces blurring into the system giving less sharp results. To circumvent this, one takes a small value for the regularisation parameter, however this causes numerical underflow due to the exponential form of the Gibbs Kernel $K$ (defined just above \eqref{eq: scaling algorithm}). For the vanilla Sinkhorn algorithm this is discussed in \cite[Remark 4.7]{peyre2019computational}, and for more general scaling algorithms see \cite{chizat2018scaling,schmitzer2019stabilized}. 
This issue can be partly minimised by carrying out the computations in the log-domain \cite[Section 4.4]{peyre2019computational}. Critically, the log-domain strategy is very costly due to many additional operations introduced, the algorithm is no longer just a matrix scaling algorithm. 
This issue is mitigated to a certain extent by the absorbing algorithm \cite[Algorithm 2.]{schmitzer2019stabilized}. Domain decomposition techniques \cite{bonafini2021domain} also seem a  feasible strategy to improve these algorithms.

There is a further added difficulty for schemes with a time-step dependent cost function, such as the ones introduced in our manuscript. 
Namely, for a fixed spatial discretization, as the time-step tends to zero the cost function ``blows up'', which stems from a $O(1/h^2)$-order term appearing in the cost function \eqref{eq cost function for kramer}. This (in addition to the $1/\varepsilon$ appearing in the Gibbs Kernel $K$ and discussed above) requires careful tuning, otherwise it will lead to numerical underflow. This suggests an operator-splitting scheme as in \cite{adams2021operator}, which consists of a transport (Hamiltonian flow) step and a steepest descent step capturing the conservative-dissipative structure, may be more favourable in simulating Kramers equation, since the cost function appearing in \cite{adams2021operator} is only of order $O(1/h)$ (instead of the order $1/h^2$ appearing in our cost term). 
    
Lastly, we note that in full rigour one should show the convergence of the fully discretised scheme \eqref{eq: regularized scheme discrete} to its continuous version as the volume $\lambda$ of each grid tile tends to zero. Such analysis has been done for many Wasserstein-type gradient flows \cite{BailoCarrilloMurakawaSchmidtchen2020, JungeMatthesOsberger2017,MatthesOsberger2014, matthes2020discretization}, however it is still an open question for the systems we consider here. As in the mentioned papers, we expect that some conditions, such as Courant–Friedrichs–Lewy (CFL) type condition, need to be imposed on the temporal and spatial meshes to guarantee the convergence of the fully discretised schemes. Revealing such conditions for non-gradient systems is nontrivial and we leave this question for future work.



\section{Well Posedness of the Regularised JKO scheme}\label{section well posedness}

The main result of this section is Proposition \ref{lemma : well-posedness of jko scheme}, stating the existence of a unique minimiser to the optimisation problem \eqref{eq : minimisation problem}. It is natural to achieve well-posedness of the scheme through finiteness, lower semi-continuity, and convexity of the functionals which appear in it.  There exist $h_0,\varepsilon_0>0$ depending only on the constants in our Assumptions, such that all the following results hold for $h_0>h>0,\varepsilon_0>\varepsilon>0$. Note that we are ultimately interested in the case where $h,\varepsilon \to 0$. We now give the main result of this section, the well-posedness of the optimal transport optimisation problem \eqref{eq : minimisation problem}. 

\begin{prop}\label{lemma : well-posedness of jko scheme}
Take $h,\varepsilon >0$ small enough with $\frac{\varepsilon}{h}\leq 1$ and $\mu \in \cP^r_2(\bR^d)$ with $\cF(\mu)<\infty$. Then, there exists a unique $\nu^* \in \cP^r_2(\bR^d)$ such that 
\begin{equation*}
   \nu^*  = \underset{\nu \in \cP^r_2(\bR^d)}{\argmin} \Big\{ \frac{1}{2h}\cW_{c_{h},\varepsilon}(\mu,\nu)+ \cF(\nu) \Big\}.
\end{equation*}
\end{prop}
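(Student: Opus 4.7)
My plan is to apply the direct method of the calculus of variations to the functional $J(\nu):=\frac{1}{2h}\cW_{c_h,\varepsilon}(\mu,\nu)+\cF(\nu)$: show it is proper, coercive and weakly lower semi-continuous, hence attains its infimum, and then deduce uniqueness from the strict convexity contributed by the entropic regularisation together with the convexity of $\cF$.

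First I would verify that $\inf J>-\infty$. Since $f\geq 0$ and $c_h\geq 0$ by \eqref{eq : cost lower bound}, only $U$ and $\varepsilon H(\gamma)$ can contribute negative mass. The hypothesis $u(s)\geq -Cs^{\alpha}$ with $d/(d+2)<\alpha<1$ gives, by the standard interpolation against the second moment (see \cite[Ch.~X]{ambrosio2008gradient}), $U(\nu)\geq -C(1+M(\nu))^{\theta}$ for some $\theta<1$; similarly the entropy admits the lower bound $H(\gamma)\geq -C(1+M(\gamma))^{\theta'}$ with $\theta'<1$ via comparison with a Gaussian. Because $\gamma\in\Pi(\mu,\nu)$ implies $M(\gamma)\leq M(\mu)+M(\nu)$, and $\varepsilon/h\leq 1$, all negative contributions are sublinear in $M(\nu)$, while \eqref{eq : cost realted to euclidean cost} converts $(c_h,\gamma)$ into a superlinear bound on $M(\nu)$ (up to $O(h^2)M(\nu)$ absorbed on the left for small $h$). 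This yields coercivity of $J$ in $M(\nu)$.

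Next, to see that $\inf J<\infty$, I would construct an explicit competitor using Assumption~\ref{Assumption change of var for f and c - for regular.}: set $\nu_{\sigma}:=(\cT_h+\sigma Z)_{\#}\mu$ where $Z$ is a standard Gaussian independent of $\mu$, and take $\gamma_\sigma\in\Pi(\mu,\nu_\sigma)$ to be the law of $(X,\cT_h(X)+\sigma Z)$ for $X\sim\mu$. This $\gamma_\sigma$ is absolutely continuous, with entropy controllable in terms of $H(\mu)$ and $\log\sigma$, while \eqref{kramer : eq : equation 61, with external} and \eqref{eq 81} bound $(c_h,\gamma_\sigma)$ and $F(\nu_\sigma)$ in terms of $M(\mu)$; the convexity and continuity of $u$ together with $\cF(\mu)<\infty$ then give $U(\nu_\sigma)<\infty$, hence $J(\nu_\sigma)<\infty$. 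Now pick a minimising sequence $\{\nu_k\}$; coercivity gives uniform bounds on $M(\nu_k)$, so $\{\nu_k\}$ is tight and the associated optimal regularised plans $\{\gamma_k\}\subset\Pi^r(\mu,\nu_k)$ (whose existence for each fixed $k$ follows by applying the same direct method to the inner transport problem, as in \cite{carlier2017convergence}) are tight too. Extracting subsequences, $\nu_k\rightharpoonup\nu^*$ and $\gamma_k\rightharpoonup\gamma^*\in\Pi(\mu,\nu^*)$. Weak lower semi-continuity of each ingredient --- $(c_h,\cdot)$ by continuity and non-negativity of $c_h$ via a Fatou/truncation argument, $H$ by the classical lower semi-continuity of the Boltzmann entropy, $U$ by Lemma~\ref{lemma : appendix : l.s.c of internal energy}, and $F$ by continuity and non-negativity of $f$ --- yields $J(\nu^*)\leq\liminf_k J(\nu_k)=\inf J$, so $\nu^*$ is a minimiser; finiteness of $U(\nu^*)$ forces $\nu^*\in\cP^r_2(\bR^d)$. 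The main technical subtlety here is establishing the weak lower semi-continuity of $(c_h,\cdot)$ without any metric structure on $c_h$, but non-negativity and continuity suffice.

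Finally, for uniqueness, suppose $\nu_1\neq\nu_2$ are two minimisers with optimal plans $\gamma_1,\gamma_2$; then $\gamma_1\neq\gamma_2$ because they have different second marginals. The midpoint $\bar\gamma:=\tfrac{1}{2}(\gamma_1+\gamma_2)$ belongs to $\Pi(\mu,\bar\nu)$ with $\bar\nu:=\tfrac{1}{2}(\nu_1+\nu_2)$. Since $\gamma\mapsto(c_h,\gamma)$ is linear, $H$ is strictly convex on absolutely continuous plans, and $\cF$ is convex ($F$ linear, $U$ convex because $u$ is), one obtains the strict inequality
\[
\cW_{c_h,\varepsilon}(\mu,\bar\nu)+2h\cF(\bar\nu)\;<\;\tfrac{1}{2}\bigl[\cW_{c_h,\varepsilon}(\mu,\nu_1)+2h\cF(\nu_1)\bigr]+\tfrac{1}{2}\bigl[\cW_{c_h,\varepsilon}(\mu,\nu_2)+2h\cF(\nu_2)\bigr],
\]
contradicting that $\nu_1,\nu_2$ are minimisers.
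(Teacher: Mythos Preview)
Your proposal is correct and follows essentially the same direct-method strategy as the paper: coercivity via the lower bound on $(c_h,\gamma)$ from \eqref{eq : cost realted to euclidean cost} combined with sublinear lower bounds on $U$ and $H$, tightness of a minimising sequence, weak lower semi-continuity of each ingredient, and uniqueness from strict convexity of the entropic term.

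Two minor deviations are worth noting. First, to show $\inf J<\infty$ you invoke the change-of-variables construction of Assumption~\ref{Assumption change of var for f and c - for regular.}; the paper instead simply takes $\nu=\mu$ with the product plan $\mu\otimes\mu$, which already yields a finite value thanks to \eqref{eq : cost upper bound} and $\cF(\mu)<\infty$ (note $H(\mu)<\infty$ follows from $U(\mu)<\infty$ via \eqref{additional assumptions 2}). Your construction works but is heavier than needed here---it is exactly the device the paper deploys later in Lemma~\ref{lemma : change of variable satisfy assumption} for the a~priori estimates, where the product plan would not give the right $h$-dependence. Second, you conclude $\nu^*\in\cP_2^r$ from $U(\nu^*)<\infty$, while the paper argues via $H(\nu^*)<\infty$; both are valid given the superlinearity of $u$ and the lower semi-continuity in Lemma~\ref{lemma : appendix : l.s.c of internal energy}.
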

The proof is provided at the end of the section after stating and proving a sequence of auxiliary results.

\subsection{Proofs and auxiliary results}
From \eqref{eq : cost realted to euclidean cost} in Assumption \ref{assumption for the cost} we immediately have the following result.
\begin{lemma}\label{lemma :  our cost and the wasserstein}
For any $h>0$ small enough, and any $\mu$ and $\nu$ in $\cP_2(\bR^d)$ with $\gamma$ the associated optimal plan in \eqref{eq: regularized cost functional}, it holds that 
\begin{equation*}
    M(\nu)\leq C \Big( (c_h,\gamma) + M(\mu)\Big),
\end{equation*}
where the constant $C>0$ is independent of $h,\varepsilon$.

\begin{proof}  Let $\gamma$ be optimal plan in \eqref{eq: regularized cost functional} with first marginal $\mu$ and second marginal $\nu$. Since for all $x,y \in \bR^d$ $\|y\|^2 \leq 2 (\|x\|^2+\|x-y\|^2)$, we have
\begin{align}
    M(\nu)=\int_{\bR^{2d}} \|y\|^2 d\gamma(x,y)\leq& 2 \int_{\bR^{2d}}\|x\|^2+\|x-y\|^2 d\gamma(x,y)
    \nonumber
    \\
    \leq& 2 \int_{\bR^{2d}}\|x\|^2+C \big( c_h(x,y)+h^2(\|x\|^2+\|y\|^2) \big)  d\gamma(x,y),
    \label{eq x1}
\end{align}
where in \eqref{eq x1} we have used \eqref{eq : cost realted to euclidean cost}. Hence for some $C>0$
\begin{equation*}
   M(\nu)   \leq C \Big( (c_h,\gamma)+(1+h^2)M(\mu)+h^2  M(\nu)\Big),
\end{equation*}
which implies that for small enough $h$,
\begin{equation*}
    M(\nu)\leq C \Big( (c_h,\gamma)+M(\mu)\Big).
\end{equation*}
\end{proof}
\end{lemma}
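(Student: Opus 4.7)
The plan is a short computation that exploits Assumption \ref{assumption for the cost}, specifically the estimate \eqref{eq : cost realted to euclidean cost} that relates $c_h(x,y)$ to the squared Euclidean distance $\|x-y\|^2$ up to an $O(h^2)$ correction in the second moments. The key observation is that the second marginal of $\gamma$ is $\nu$, so $M(\nu)=\int_{\bR^{2d}}\|y\|^2\,d\gamma(x,y)$, which opens the way to control $M(\nu)$ through the cost.

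First, I would split $\|y\|^2$ using the elementary inequality $\|y\|^2\leq 2\|x\|^2+2\|x-y\|^2$, obtaining
\begin{equation*}
M(\nu)\leq 2M(\mu)+2\int_{\bR^{2d}}\|x-y\|^2\,d\gamma(x,y).
\end{equation*}
Then I would invoke \eqref{eq : cost realted to euclidean cost} pointwise under the integral, which yields
\begin{equation*}
\int_{\bR^{2d}}\|x-y\|^2\,d\gamma(x,y)\leq C(c_h,\gamma)+Ch^2\bigl(M(\mu)+M(\nu)\bigr).
\end{equation*}

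Combining the two displays gives $M(\nu)\leq 2(1+Ch^2)M(\mu)+2C(c_h,\gamma)+2Ch^2 M(\nu)$. The only subtle point — and the one slightly more delicate step — is that one needs the $M(\nu)$ on the right to be absorbed into the left-hand side; this is where the smallness of $h$ enters. Choosing $h_0>0$ small enough so that $2Ch_0^2\leq 1/2$ (which depends only on the constant in \eqref{eq : cost realted to euclidean cost} and not on $\varepsilon$), we can rearrange to obtain $\tfrac12 M(\nu)\leq 2(1+Ch^2)M(\mu)+2C(c_h,\gamma)$, and hence the claimed bound with a new universal constant $C>0$ independent of $h$ and $\varepsilon$. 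No property of the entropic regularisation parameter $\varepsilon$ is used beyond the fact that $\gamma$ is a coupling with marginals $\mu,\nu$, so the bound is uniform in $\varepsilon$.
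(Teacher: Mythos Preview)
Your proof is correct and follows essentially the same route as the paper: the elementary inequality $\|y\|^2\leq 2\|x\|^2+2\|x-y\|^2$, then Assumption~\ref{assumption for the cost}~\eqref{eq : cost realted to euclidean cost} to replace $\|x-y\|^2$ by $c_h$ plus an $O(h^2)$ second-moment correction, followed by absorption of the $h^2 M(\nu)$ term for $h$ small. Your explicit identification of the threshold $2Ch_0^2\leq 1/2$ and the remark that $\varepsilon$ plays no role are slightly more detailed than the paper, but the argument is the same.
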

Of course if $\rho^n_{h,\varepsilon},\rho^{n-1}_{h,\varepsilon}$ are built from the scheme \eqref{eq: regularized scheme} with associated plan $\gamma^n_{h,\varepsilon}$, then Lemma \ref{lemma :  our cost and the wasserstein} says that for small enough $h$
\begin{equation}\label{eq : moments and cost function}
    M(\rho^n_{h,\varepsilon})\leq C \Big((c_h,\gamma^n_{h,\varepsilon}) + M(\rho^{n-1}_{h,\varepsilon})\Big).
\end{equation}

\begin{lemma}[Weak lower semi-continuity of $\gamma\mapsto (c_h,\gamma)$] \label{lemme l.s.c of gamma to cost}  Let $h>0$. Let $\{\gamma_k\}_{k\in \bN}\subset \cP(\bR^{2d})$, $\gamma \in \cP(\bR^{2d})$, with $\gamma_k \rightharpoonup \gamma$. Then 
\begin{equation*}
    (c_h,\gamma)\leq \liminf_{k\to \infty} (c_h,\gamma_k).
\end{equation*}
\end{lemma}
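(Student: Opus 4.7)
The plan is to exploit the two key properties that $c_h$ enjoys under Assumption~\ref{assumption for the cost}: it is continuous (in particular lower semi-continuous) and non-negative by \eqref{eq : cost lower bound}. The result is then a standard consequence of the Portmanteau-type lemma for weak convergence of probability measures applied to non-negative lower semi-continuous integrands. I record the argument explicitly since it is short.

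First, I would introduce a sequence of bounded continuous truncations of $c_h$. For each $R>0$, define
\[
c_h^R(x,y):=\min\{c_h(x,y),R\},\qquad (x,y)\in\bR^{2d}.
\]
Since $c_h$ is continuous and non-negative, each $c_h^R$ is bounded, continuous and non-negative, and the family is monotonically increasing in $R$ with $c_h^R\nearrow c_h$ pointwise as $R\to\infty$.

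Next, because $\gamma_k\rightharpoonup\gamma$ in $\cP(\bR^{2d})$ and $c_h^R$ is bounded and continuous, the definition of weak convergence yields
\[
\lim_{k\to\infty}(c_h^R,\gamma_k)=(c_h^R,\gamma).
\]
Combining this with the pointwise inequality $c_h\geq c_h^R\geq 0$ gives
\[
(c_h^R,\gamma)=\lim_{k\to\infty}(c_h^R,\gamma_k)\leq \liminf_{k\to\infty}(c_h,\gamma_k).
\]

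Finally, I would let $R\to\infty$. By the monotone convergence theorem (applied under the measure $\gamma$ to the non-negative monotone sequence $c_h^R\nearrow c_h$), the left-hand side converges to $(c_h,\gamma)$, yielding
\[
(c_h,\gamma)\leq \liminf_{k\to\infty}(c_h,\gamma_k),
\]
which is the claim. There is no genuine obstacle here; the only ingredients needed are the continuity of $c_h$ and its non-negativity, both of which are part of Assumption~\ref{assumption for the cost}. In particular no integrability of $c_h$ against $\gamma$ has to be assumed a priori, since the inequality is formulated in $[0,+\infty]$.
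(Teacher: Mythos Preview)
Your argument is correct and is essentially the same as the paper's: the paper simply invokes \cite[Lemma 4.3]{villani2008optimal}, noting that $c_h$ is continuous and non-negative by Assumption~\ref{assumption for the cost}, and your truncation plus monotone convergence argument is precisely the standard proof of that lemma.
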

\begin{proof}
The map $c_h:\bR^{2d}\to \bR$ is continuous and non-negative by Assumption \ref{assumption for the cost}, hence the result is given by \cite[Lemma 4.3]{villani2008optimal}.
\end{proof}

\begin{lemma}[Weak lower semi-continuity of entropy under bounded 2nd moments]\label{lemma : l.s.c of H}
Let  $\{\gamma_k\}_{k\in \bN}\subset \cP_2(\bR^{2d}) $, $ \gamma \in \cP_2(\bR^{2d})$ with $\gamma_k \rightharpoonup \gamma$. Further assume that there exists a $C>0$, such that for all $k\in \bN,$  $M(\gamma_k),M(\gamma) < C$, then 
\begin{equation*}
    H(\gamma)\leq \liminf_{k\to \infty} H(\gamma_k).
\end{equation*}
\end{lemma}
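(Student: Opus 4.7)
The plan is to reduce $H$ to a Kullback--Leibler divergence against a carefully chosen reference probability measure, whose weak lower semicontinuity is classical, and then to absorb the correction term using uniform integrability coming from the moment bound.

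Concretely, I would pick $N>d$ and set $V(z):=N\log(1+\|z\|^{2})$ on $\bR^{2d}$; the measure $\mu(dz):=Z^{-1}e^{-V(z)}\,dz$ with $Z=\int e^{-V}$ is then a bona fide probability measure. A short algebraic manipulation with $\rho:=d\gamma/dz$ gives, for any $\gamma\in\cP^{r}_{2}(\bR^{2d})$,
$$H(\gamma)\;=\;\mathrm{KL}(\gamma\,\|\,\mu)\;-\;\int V\,d\gamma\;-\;\log Z,$$
and the identity is consistent for $\gamma$ that is not absolutely continuous (both sides equal $+\infty$, since then $\gamma\not\ll\mu$).

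I would then invoke two standard facts. Firstly, $\gamma\mapsto\mathrm{KL}(\gamma\,\|\,\mu)$ is weakly lower semicontinuous on $\cP(\bR^{2d})$ (for instance via the Donsker--Varadhan dual representation), so $\mathrm{KL}(\gamma\,\|\,\mu)\leq\liminf_{k}\mathrm{KL}(\gamma_{k}\,\|\,\mu)$. Secondly, the uniform second-moment bound $M(\gamma_{k})<C$ combined with $V(z)=o(\|z\|^{2})$ at infinity yields uniform integrability of $V$ against the family $\{\gamma_{k}\}$: for any $T>0$,
$$\sup_{k}\int_{\|z\|^{2}>T} V\,d\gamma_{k}\;\leq\; C\,\sup_{\|z\|^{2}>T}\frac{V(z)}{\|z\|^{2}}\;\longrightarrow\;0\quad\text{as }T\to\infty.$$
Together with $\gamma_{k}\rightharpoonup\gamma$ this gives $\int V\,d\gamma_{k}\to\int V\,d\gamma$ by the generalised portmanteau theorem (or Vitali). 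Plugging these two facts into the decomposition yields
$$\liminf_{k\to\infty}H(\gamma_{k})\;\geq\;\mathrm{KL}(\gamma\,\|\,\mu)-\int V\,d\gamma-\log Z\;=\;H(\gamma),$$
which is the claim.

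The main obstacle, and the reason Lemma \ref{lemma : l.s.c of H} needs the moment hypothesis, is in choosing the reference measure. A Gaussian reference $\mu\propto e^{-\alpha\|z\|^{2}}$ would leave a linear-in-moment correction $\alpha\int\|z\|^{2}d\gamma_{k}$, which a uniform second-moment bound alone cannot control (we would need a strictly higher moment to get uniform integrability of $\|z\|^{2}$). The polynomial weight $(1+\|z\|^{2})^{-N}$ is strictly sub-Gaussian, so its potential $V$ grows strictly slower than $\|z\|^{2}$ and the second-moment bound is precisely enough to convert this growth into uniform integrability while still producing a normalisable reference measure.
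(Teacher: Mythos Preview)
Your argument is correct and self-contained. The paper, however, takes a much shorter route: it simply invokes Lemma~\ref{lemma : appendix : l.s.c of internal energy} (the JKO lower semicontinuity result for general internal energies $\mu\mapsto\int u(\mu)\,dx$ under uniformly bounded second moments) with the choice $u(a)=a\log a$, so the whole proof is one line. The advantage of the paper's approach is economy, since that general lemma is already stated and used elsewhere (for $U$ in Lemma~\ref{lemma l.s.c of energy}); the advantage of yours is that it is explicit about the mechanism, isolating precisely where the moment hypothesis enters (uniform integrability of the sub-quadratic potential $V$) and why a Gaussian reference would not suffice. Both arguments are standard; yours is essentially the relative-entropy proof one finds in large-deviations texts, while the paper defers to the internal-energy framework already in place.
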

\begin{proof}
This follows immediately by Lemma \ref{lemma : appendix : l.s.c of internal energy} taking $u(a)=a\log(a)$.
\end{proof}

\begin{lemma}[Existence of minimising couplings in the optimal transport problem]\label{lemma existence of minimising couplings in the optimal transport problem}
Given $\mu,\nu \in \cP^r_2(\bR^d)$ with finite entropy $H(\mu),H(\nu)<\infty$. Then, there exists a $\gamma \in \Pi^r(\mu,\nu)$ which attains the infimum in $\cW_{c_{h},\varepsilon}(\mu,\nu)$.

\begin{proof}
By \cite[Lemma 4.4]{villani2008optimal} $\Pi(\mu,\nu)$ is tight, and hence by Prokhorov's Theorem it is also relatively compact.  
Let $\gamma_{k}\in \Pi(\mu,\nu)$, $k\in \bN$, be a minimising sequence of $\cW_{c_{h},\varepsilon}(\mu,\nu)$. 

Now, using that $\Pi(\mu,\nu)$ is relatively compact, we can say (extracting a sub-sequence and relabelling) that $\gamma_k \rightharpoonup \gamma \in \Pi(\mu,\nu)$ (since $\Pi(\mu,\nu)$ is weakly closed). Lemmas \ref{lemme l.s.c of gamma to cost}, \ref{lemma : l.s.c of H} proved lower semi-continuity of $\hat{\gamma} \mapsto (c_h,\hat{\gamma})$, $\hat{\gamma}\mapsto H(\hat{\gamma})$ respectively, which implies the limit, $\gamma$, is a minimiser.

It remains only to show that $\gamma$ has a density. Using \eqref{eq : cost upper bound} (and that there exists an admissible plan, e.g.,~the product measure $\mu \otimes \nu $) we see that $\cW_{c_{h},\varepsilon}(\mu,\nu)<\infty$.  Since $\cW_{c_{h},\varepsilon}(\mu,\nu)<\infty$ and $(c_h,\gamma)\geq 0$ we deduce that $H(\gamma)<\infty$, hence $\gamma\in \Pi^r(\mu,\nu)$.
\end{proof}
\end{lemma}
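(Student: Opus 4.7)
I would apply the direct method of the calculus of variations to the functional $\gamma\mapsto(c_h,\gamma)+\varepsilon H(\gamma)$ on the constraint set $\Pi(\mu,\nu)$, using the lower semi-continuity lemmas already established just above.

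First I would verify that the infimum defining $\cW_{c_h,\varepsilon}(\mu,\nu)$ is finite. The product coupling $\mu\otimes\nu$ is admissible, and by \eqref{eq : cost upper bound} together with $\mu,\nu\in\cP_2^r(\bR^d)$ one has $(c_h,\mu\otimes\nu)\leq C(h)(M(\mu)+M(\nu))<\infty$, while $H(\mu\otimes\nu)=H(\mu)+H(\nu)<\infty$ by the finite-entropy hypothesis on $\mu$ and $\nu$. For the lower bound, $(c_h,\gamma)\geq 0$ by \eqref{eq : cost lower bound}, and a standard comparison with a Gaussian of variance proportional to $M(\mu)+M(\nu)$ yields $H(\gamma)\geq -C(1+M(\mu)+M(\nu))$ uniformly on $\Pi(\mu,\nu)$; hence the infimum lies in $\bR$.

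Next I would take a minimising sequence $\{\gamma_k\}\subset\Pi(\mu,\nu)$ and extract a weakly convergent subsequence. Tightness of $\Pi(\mu,\nu)$ follows from \cite[Lemma 4.4]{villani2008optimal} because the marginals are fixed; moreover, each $\gamma_k$ has the same second moment $M(\gamma_k)=M(\mu)+M(\nu)$, which provides the uniform second-moment control I shall need in the lower semi-continuity step. By Prokhorov's theorem combined with the weak closedness of $\Pi(\mu,\nu)$ (the marginal projections being continuous), along a subsequence $\gamma_k\rightharpoonup\gamma$ with $\gamma\in\Pi(\mu,\nu)$.

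Passing to the limit is then immediate from the auxiliary results: Lemma \ref{lemme l.s.c of gamma to cost} gives $(c_h,\gamma)\leq\liminf_k(c_h,\gamma_k)$, while the uniform second-moment bound allows me to invoke Lemma \ref{lemma : l.s.c of H} to obtain $H(\gamma)\leq\liminf_k H(\gamma_k)$. Summing with weight $\varepsilon$ shows that $\gamma$ realises the infimum. To conclude $\gamma\in\Pi^r(\mu,\nu)$, note that $(c_h,\gamma)\geq 0$ together with the finiteness of the minimum value forces $H(\gamma)<\infty$, which in turn implies $\gamma\ll\mathrm{Leb}$ on $\bR^{2d}$. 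The main technical point to watch is precisely the hypothesis of Lemma \ref{lemma : l.s.c of H}: it requires a uniform second-moment bound along the minimising sequence, but here this is automatic because $M(\gamma_k)$ depends only on the fixed marginals, so the proof reduces to a clean assembly of the already-proven tools.
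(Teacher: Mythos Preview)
Your proof is correct and follows essentially the same route as the paper's: tightness of $\Pi(\mu,\nu)$ via \cite[Lemma 4.4]{villani2008optimal}, extraction of a weak limit by Prokhorov, passage to the limit using Lemmas \ref{lemme l.s.c of gamma to cost} and \ref{lemma : l.s.c of H}, and finally $H(\gamma)<\infty$ from finiteness of the value and non-negativity of the cost. You are in fact slightly more careful than the paper in two places---you explicitly check the lower bound on the functional so that the infimum is a real number, and you spell out why the uniform second-moment hypothesis of Lemma \ref{lemma : l.s.c of H} is automatically met (since $M(\gamma_k)=M(\mu)+M(\nu)$ for every $\gamma_k\in\Pi(\mu,\nu)$)---but the argument is otherwise identical.
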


So far we have shown that there exists an absolutely continuous transport plan with finite entropy that solves the optimal transport problem \eqref{eq: regularized cost functional} between any two measures in $\cP_2^r(\bR^d)$. Next, we explore some properties of the Kantorovich optimal transport cost functional $\cW_{c_{h},\varepsilon}$ defined by \eqref{eq: regularized cost functional}. 
\begin{lemma}[Strict Convexity of $\nu\mapsto \cW_{c_{h},\varepsilon}(\mu,\nu)$]
\label{lemma convexity of the transport problem}
For a fixed $\mu\in\cP_2^r(\bR^d)$,
$$ 
\cP_2^r(\bR^d) \ni \nu \mapsto \cW_{c_{h},\varepsilon}(\mu,\nu), 
$$
is strictly convex.
\begin{proof}
This follows as in \cite[Lemma 2.5]{carlier2017convergence} by linearity of $\gamma \mapsto (c_h,\gamma)$ and strict convexity of $H$.
\end{proof}
\end{lemma}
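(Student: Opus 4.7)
The plan is to establish strict convexity by the direct approach: given two distinct targets $\nu_0,\nu_1 \in \cP_2^r(\bR^d)$ and $t\in(0,1)$, take optimal plans $\gamma_0,\gamma_1$ for the two corresponding transport problems, form the convex combination $\gamma_t := (1-t)\gamma_0 + t\gamma_1$, observe that it is an admissible plan between $\mu$ and $\nu_t := (1-t)\nu_0 + t\nu_1$, and then exploit linearity of $\gamma \mapsto (c_h,\gamma)$ together with strict convexity of $H$ to produce a strict inequality.

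First I would invoke Lemma \ref{lemma existence of minimising couplings in the optimal transport problem} to select $\gamma_i \in \Pi^r(\mu,\nu_i)$ attaining $\cW_{c_h,\varepsilon}(\mu,\nu_i)$ for $i=0,1$. If either $\cW_{c_h,\varepsilon}(\mu,\nu_0)$ or $\cW_{c_h,\varepsilon}(\mu,\nu_1)$ is infinite there is nothing to prove, so I may assume both are finite, in which case $H(\gamma_0),H(\gamma_1)<\infty$ because $(c_h,\gamma_i)\geq 0$ by \eqref{eq : cost lower bound}. The marginals of $\gamma_t$ are $(1-t)\mu + t\mu = \mu$ and $(1-t)\nu_0 + t\nu_1 = \nu_t$, so $\gamma_t \in \Pi(\mu,\nu_t)$ is admissible for $\cW_{c_h,\varepsilon}(\mu,\nu_t)$.

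Next I would combine the three ingredients. By admissibility and linearity,
\begin{equation*}
\cW_{c_h,\varepsilon}(\mu,\nu_t)
 \leq (c_h,\gamma_t) + \varepsilon H(\gamma_t)
 = (1-t)(c_h,\gamma_0) + t(c_h,\gamma_1) + \varepsilon H(\gamma_t).
\end{equation*}
Since $\nu_0 \neq \nu_1$ the two plans $\gamma_0,\gamma_1$ have distinct second marginals and hence are distinct as measures on $\bR^{2d}$; strict convexity of the map $a\mapsto a\log a$ then yields $H(\gamma_t) < (1-t)H(\gamma_0) + tH(\gamma_1)$ (finiteness of both entropies ensures this is a genuine strict inequality, not $\infty < \infty$). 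Plugging this in and regrouping gives
\begin{equation*}
\cW_{c_h,\varepsilon}(\mu,\nu_t) < (1-t)\bigl[(c_h,\gamma_0) + \varepsilon H(\gamma_0)\bigr] + t\bigl[(c_h,\gamma_1) + \varepsilon H(\gamma_1)\bigr] = (1-t)\cW_{c_h,\varepsilon}(\mu,\nu_0) + t\cW_{c_h,\varepsilon}(\mu,\nu_1),
\end{equation*}
which is the desired strict convexity.

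The only mildly delicate point is the implication $\nu_0 \neq \nu_1 \Rightarrow \gamma_0 \neq \gamma_1$, which is immediate from the marginal constraints, and the handling of the case where one of the entropies is infinite (where one must argue that the sub-additivity of $H$ on convex combinations still yields the inequality in the correct direction). Otherwise the argument is purely algebraic and parallels \cite[Lemma 2.5]{carlier2017convergence}; no assumption on the cost beyond non-negativity (already guaranteed by \eqref{eq : cost lower bound}) is needed, and in particular the geometry of $c_h$ plays no role in this lemma.
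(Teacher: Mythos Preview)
Your proposal is correct and follows exactly the approach the paper indicates: it is precisely the argument of \cite[Lemma 2.5]{carlier2017convergence}, using linearity of $\gamma\mapsto(c_h,\gamma)$ together with strict convexity of $H$, with the details filled in. The paper gives no further argument beyond that one-line reference, so your write-up is simply an expanded version of the same proof.
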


\begin{lemma}[Lower semi-continuity of $\nu \mapsto \cW_{c_{h},\varepsilon}(\mu,\nu)$ restricted to $\cP^r_2(\bR^d)$ and uniform moment bounds]
\label{lemma joint l.s.c of transport problem}
Let $\{\nu_k\}_{k\in\bN}\subset \cP_2^r(\bR^d)$, $\mu,\nu \in \cP_2^r(\bR^d)$, with $\nu_k \rightharpoonup \nu$. Moreover,  assume for all $k\in \bN$ the probability measures $\nu_k,\mu,\nu $ have uniformly bounded entropy and second moments. Then
$$ 
\cW_{c_{h},\varepsilon}(\mu,\nu) \leq \liminf_{k\to \infty} \cW_{c_{h},\varepsilon}(\mu,\nu_k). 
$$
\end{lemma}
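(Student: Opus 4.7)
The plan is a standard compactness argument on the space of couplings. We may assume $\liminf_{k\to\infty}\cW_{c_h,\varepsilon}(\mu,\nu_k)<\infty$, since otherwise the inequality is trivial, and by passing to a subsequence (without relabelling) we may further assume that the $\liminf$ is attained as a genuine limit.

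First, using Lemma \ref{lemma existence of minimising couplings in the optimal transport problem}, for each $k$ we select an optimal plan $\gamma_k \in \Pi^r(\mu,\nu_k)$ realising $\cW_{c_h,\varepsilon}(\mu,\nu_k)$. Since the marginals $\mu$ and $\nu_k$ have uniformly bounded second moments, the same holds for $\gamma_k$:
\begin{equation*}
M(\gamma_k) = \int_{\bR^{2d}}(\|x\|^2+\|y\|^2)\,\gamma_k(dx,dy) = M(\mu)+M(\nu_k) \leq C,
\end{equation*}
uniformly in $k$. In particular, $\{\gamma_k\}$ is tight (for instance via Markov's inequality applied to $\|x\|^2+\|y\|^2$), so by Prokhorov's theorem, extracting a further subsequence if necessary, $\gamma_k \rightharpoonup \gamma$ for some $\gamma \in \cP(\bR^{2d})$.

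Next I would identify the marginals of $\gamma$. Since projection onto each coordinate is continuous and linear, weak convergence of $\gamma_k$ entails weak convergence of its marginals: the first marginal of $\gamma$ is $\mu$, and the second is $\lim_k \nu_k = \nu$. Hence $\gamma\in\Pi(\mu,\nu)$, providing an admissible plan in the definition of $\cW_{c_h,\varepsilon}(\mu,\nu)$.

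Finally, I would pass to the limit in the two terms of the regularised cost. By Lemma \ref{lemme l.s.c of gamma to cost}, $(c_h,\gamma)\leq \liminf_{k\to\infty}(c_h,\gamma_k)$. The uniform second moment bound on $\gamma_k$ (and on $\gamma$, since weak convergence with uniformly bounded second moments preserves the moment bound in the limit) allows us to invoke Lemma \ref{lemma : l.s.c of H}, giving $H(\gamma)\leq \liminf_{k\to\infty}H(\gamma_k)$. Combining these and using that $\gamma\in\Pi(\mu,\nu)$ is admissible,
\begin{equation*}
\cW_{c_h,\varepsilon}(\mu,\nu)\leq (c_h,\gamma)+\varepsilon H(\gamma)\leq \liminf_{k\to\infty}\bigl[(c_h,\gamma_k)+\varepsilon H(\gamma_k)\bigr]=\liminf_{k\to\infty}\cW_{c_h,\varepsilon}(\mu,\nu_k),
\end{equation*}
which yields the claim. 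The only subtle point is ensuring the uniform second moment bound on $\gamma_k$ (needed to apply Lemma \ref{lemma : l.s.c of H}) and that $\gamma$ inherits absolute continuity of its marginals so the entropy term makes sense; both follow directly from the hypotheses on $\mu$ and on the sequence $\{\nu_k\}$ together with the finiteness of the $\liminf$.
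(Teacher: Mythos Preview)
Your proposal is correct and follows essentially the same compactness argument as the paper's proof: select optimal plans $\gamma_k$, extract a weakly convergent subsequence, identify the limit's marginals as $(\mu,\nu)$, and invoke the lower semi-continuity Lemmas \ref{lemme l.s.c of gamma to cost} and \ref{lemma : l.s.c of H}. The only cosmetic difference is that the paper obtains tightness of $\{\gamma_k\}$ by citing \cite[Lemma 4.4]{villani2008optimal} (tightness of couplings with tight marginals), whereas you derive it directly from the uniform second-moment bound via Markov's inequality.
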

\begin{proof}
Let $\{\nu_k\},\mu,\nu$ be as assumed above, and $\{\gamma_k\}$ be the associated optimal plans in $\cW_{c_{h},\varepsilon}(\mu,\nu_k)$. 
Note $\{\gamma_k\}\subset \Pi(\mu,\{\nu_k\})$ (see Notation section). 
Since $\{\nu_k\}$ is weakly convergent it is tight, and \cite[Lemma 4.4]{villani2008optimal} implies that $\Pi(\mu,\{\nu_k\}) $ is too, hence extracting (and relabelling) a sub-sequence $\{\gamma_k\}$, we know that $\gamma_k\rightharpoonup \gamma \in \cP(\bR^{2d})$. 
In fact $\gamma \in \Pi(\mu,\nu)$ since weak convergence of $\gamma_k$ implies weak convergence of its marginals (and we know $\nu_k \rightharpoonup \nu$). 
Now, the lower semi-continuity established in Lemmas \ref{lemme l.s.c of gamma to cost} and \ref{lemma : l.s.c of H}  implies that
\begin{align*}
  \liminf_{k\to \infty} \cW_{c_{h},\varepsilon}(\mu,\nu_k)=
  \liminf_{k\to \infty}
  \frac{1}{2h}(c_h,\gamma_k)+\varepsilon H(\gamma_k)
   &
  \geq \frac{1}{2h}(c_h,\gamma)+\varepsilon H(\gamma) 
  \\
  &\geq \cW_{c_{h},\varepsilon}(\mu,\nu).
 \end{align*} 
\end{proof}

\begin{lemma}\label{lemma l.s.c of energy}[Lower-semi continuity of $\cF$ under uniformly bounded moments]  Let $\{\mu_k\}_{k\in \bN} \subset \cP_2(\bR^d)$, $\mu \in \cP_2(\bR^d)$ with $\mu_k\rightharpoonup \mu$. Assume $\sup_k M(\mu_k) < \infty$, then 
\begin{equation}
   \cF(\mu) \leq \liminf_{k\to\infty} \cF(\mu_k).
\end{equation}
\end{lemma}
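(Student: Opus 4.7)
The plan is to decompose $\cF = F + U$ as in \eqref{eq: general free energy} and establish lower semi-continuity of each summand separately, then combine via the super-additivity of $\liminf$. Since all involved quantities are well defined (in $\bR\cup\{+\infty\}$) given the non-negativity of $f$ and the lower bound on $u$ provided by \eqref{eq : an assumption on u}, splitting the functional in this way is legitimate.

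For the potential part $F(\rho) = \int f\,d\rho$: by Assumption \ref{Assumption on potential and internal energy}, $f\in C(\bR^d)$ is non-negative, hence lower semi-continuous and bounded from below. The standard Portmanteau-type argument (truncate at height $N$, apply bounded weak convergence to the continuous bounded function $f\wedge N$, and then let $N\to \infty$ by monotone convergence) yields
\begin{equation*}
F(\mu) \leq \liminf_{k\to\infty} F(\mu_k),
\end{equation*}
with no use of the moment bound being strictly necessary at this stage.

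For the internal energy $U(\rho)=\int u(\rho)\,dx$ we invoke the appendix result Lemma \ref{lemma : appendix : l.s.c of internal energy}. Its hypotheses are precisely those we have: $u$ is convex on $[0,\infty)$ with $u(0)=0$, superlinear at infinity, and its negative part is controlled by $Cs^\alpha$ with $\alpha<1$ by \eqref{eq : an assumption on u}; the uniform second moment bound $\sup_k M(\mu_k)<\infty$ is used to prevent mass leaking to infinity when handling the negative contribution (via a tightness/de la Vallée Poussin argument, since the negative part grows sub-linearly in the density and is dominated by a function with a vanishing moment tail). Consequently $U(\mu)\leq \liminf_k U(\mu_k)$.

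Combining the two bounds via super-additivity of $\liminf$ gives
\begin{equation*}
\cF(\mu)=F(\mu)+U(\mu)\leq \liminf_{k\to\infty}F(\mu_k)+\liminf_{k\to\infty}U(\mu_k)\leq \liminf_{k\to\infty}\cF(\mu_k).
\end{equation*}
The main obstacle, and the only real content of the argument, is the lower semi-continuity of $U$: unlike $F$, the integrand $u(\rho)$ need not be non-negative, so one cannot apply Portmanteau directly and must instead exploit convexity together with the second-moment control to rule out pathological concentration or escape of mass. The $F$ piece is essentially free once $f\geq 0$ and continuity are in place.
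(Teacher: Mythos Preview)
Your proof is correct and follows essentially the same structure as the paper's: split $\cF=F+U$, invoke Lemma \ref{lemma : appendix : l.s.c of internal energy} for $U$ under the uniform second-moment bound, and handle $F$ separately using that $f\geq 0$ is continuous. The only minor difference is in the justification for $F$: you give a direct Portmanteau/truncation argument, whereas the paper cites \cite[Theorem~2.38]{ambrosio2000functions} applied to the $1$-homogeneous convex integrand $(x,y)\mapsto f(x)y$; your route is more elementary and equally valid.
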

\begin{proof}
The lower semi-continuity of $U$ follows from the uniform bounded moments, Assumption \ref{Assumption on potential and internal energy} and Lemma \ref{lemma : appendix : l.s.c of internal energy}. The lower semi-continuity of $F$ follows from \cite[Theorem 2.38]{ambrosio2000functions}, since $(x,y): \bR^d \times \bR \to \bR$, $(x,y)\mapsto f(x)y$ is clearly 1-homogeneous and convex in $y$ for fixed $x$ (as $f$ is non-negative).
\end{proof}

We are now in a position to prove the main result of this section.
\begin{proof}[Proof of proposition \ref{lemma : well-posedness of jko scheme}]
Denote $J_{c_{h},\varepsilon}(\mu,\nu):= \frac{1}{2h}\cW_{c_{h},\varepsilon}(\mu,\nu)+ \cF(\nu)$, and $\gamma$ the optimal coupling in $\cW_{c_{h},\varepsilon}(\mu,\nu)$. Note that since $f\geq 0$ and by Lemma \ref{lemma : appendix : entropy bound} we have, for some fixed $C>0$ and $0<\alpha<1$,
\begin{equation}\label{eq 85}
    J_{c_{h},\varepsilon}(\mu,\nu)
    \geq 
    \frac{1}{2h}\cW_{c_{h},\varepsilon} (\mu,\nu) -C(1+M(\nu))^{\alpha}.
\end{equation}
Furthermore, since the sum of infima is less than the infima of the sum, and by the property of the entropy and marginals $H(\gamma)\geq H(\mu)+H(\nu)$, we have 
\begin{align*}
   \frac{1}{2h} \cW_{c_{h},\varepsilon} (\mu,\nu)  
    \geq 
    & 
    \frac{1}{2h}(c_h,\gamma) +\frac{\varepsilon}{2h}\big( H(\mu)+H(\nu) \big). 
\end{align*}
Moreover, using Lemma \ref{lemma :  our cost and the wasserstein} we have, for $h,\varepsilon>0$ small enough
\begin{align*}
   \frac{1}{2h} \cW_{c_{h},\varepsilon} (\mu,\nu)  \geq 
    & 
    \frac{1}{2h}(c_h,\gamma)+M(\mu)-M(\mu)+\frac{\varepsilon}{2h} \big(H(\mu) +  H(\nu)\big)
    \\
    \geq
    & C_1 M(\nu) + C_{\mu,\varepsilon,h} + \frac{\varepsilon}{2h} H(\nu), \qquad 
\end{align*}
with fixed constants $C_1>0$, and $C_{\mu,\varepsilon,h}$ depending only on $\mu,\varepsilon,h$. Consequently by Lemma \ref{lemma : appendix : entropy bound} we arrive at 
\begin{align}\label{eq 84}
    \frac{1}{2h} \cW_{c_{h},\varepsilon} (\mu,\nu)  \geq &C_1 M(\nu) +\textcolor{black}{C_{\mu,\varepsilon,h}}  - \frac{\varepsilon}{2h} C(1+M(\nu))^\alpha.
\end{align}
Combining \eqref{eq 84} with \eqref{eq 85}, and \textcolor{black}{choosing $h,\varepsilon$ small enough} 
we get that
\begin{align}\label{eq 86}
     J_{c_{h},\varepsilon}(\mu,\nu)\geq& C_1 M(\nu)+\textcolor{black}{C_{\mu,\varepsilon,h}}-C_1(1+M(\nu))^\alpha.
\end{align}
Now employing \eqref{eq appendix 1}, as well as the Bernoulli inequality: $(1+s)^\alpha \leq 1+ \alpha s$ for all $s\geq -1$ and $\alpha \in(0,1)$, one can see that \eqref{eq 86} implies that the functional $\nu\mapsto J_{c_{h},\varepsilon}(\mu,\nu)$ is bounded from below. Note that there exists a $\nu\in\cP_2^r(\bR^d)$ such that  $J_{c_{h},\varepsilon}(\mu,\nu)<\infty$, for example,  take $\nu=\mu$ (and the product plan). Let $\{\nu_k\}$ be a minimising sequence of $\nu\mapsto J_{c_{h},\varepsilon}(\mu,\nu)$. Note $M(\nu_k),H(\nu_k)$ are uniformly bounded. Since $M(\nu_k)$ is uniformly bounded,   
%
the set $\{\nu_k\}$ is tight
, hence extracting a subsequence (not relabelled) we obtain $\nu_k\rightharpoonup \nu \in \cP(\bR^d)$. Moreover, $\nu \in \cP_2(\bR^d)$ since uniform bounded 2nd moments and weak convergence implies the limit has a bounded 2nd moment. 
The lower semi-continuity proved in Lemmas \ref{lemma joint l.s.c of transport problem} and \ref{lemma l.s.c of energy} ensures that the limit $\nu$ is a minimiser. That $\nu \in \cP^r_2(\bR^d)$ follows since lower semi-continuity of $\cP_2(\bR^d) \ni \nu \mapsto H(\nu)$ (see Lemma \ref{lemma : appendix : l.s.c of internal energy}) which implies $H(\nu)$ is finite. Finally the uniqueness of $\nu$ follows from the linearity of $F$, convexity of $U$, and that $\cW_{c_{h},\varepsilon}$ is strictly convex by Lemma \ref{lemma convexity of the transport problem}.
\end{proof}

\begin{rem}
Note that the strict convexity of the regularisation functional allowed us to easily ensure uniqueness of the minimiser in Proposition \ref{lemma : well-posedness of jko scheme}.
\end{rem}


\section{Proof of the Main Result}
\label{sec:GeneralCase}
This section presents the proof of the main result, Theorem  \ref{theorem MAIN THEOREM}. We first establish discrete Euler-Lagrange equations for the minimisers of the regularised scheme \ref{eq: regularized scheme}, then we derive necessary a priori estimates, and finally we prove the convergence \textcolor{black}{(up to a subsequence)} of the scheme.

\subsection{Discrete Euler-Lagrange Equations}\label{section E.L eq}

In this section we study the minimisers of the optimisation problem \eqref{eq : minimisation problem}. This is done by studying the functional $\frac{1}{2h}\cW_{c_h,\varepsilon}\big(\mu,\cdot \big)+\cF(\cdot)$ (for a fixed $\mu \in \cP^r_2(\bR^d)$) at small perturbations around its minimiser. Recall that Proposition \ref{lemma : well-posedness of jko scheme} ensured well-posedness of \eqref{eq : minimisation problem} for small enough $h,\varepsilon>0$, and thus the associated Euler-Lagrange equations will also hold for such $h,\varepsilon$ small enough.

When \eqref{eq:Formal-Evolution} is describing a Wasserstein gradient flow its solution can be viewed as the minimiser of a large deviation rate functional \cite{adams2013large}. With this perspective one can view the Euler-Lagrange equations, established below in Lemma \ref{lemma : general E.L equation}, as the discrete analogue of \eqref{eq: weak formulation general PDE}.  

Throughout this section, for a given vector field $\eta\in C^\infty_c(\bR^d;\bR^d)$  we call $\Phi : \bR_+\times \bR^d \to \bR^d$ the flow through $\eta$ with dynamics
\begin{equation}\label{eq : the flow}
    \partial_s \Phi_s = \eta(\Phi_s),~\Phi_0=\text{id}.
\end{equation}

The following result is well established (for instance see \cite[Proposition 3.5]{carlier2017convergence}). 
\begin{lemma}
Let $\nu\in \cP_2^r(\bR^d)$, and $\eta\in C^\infty_c(\bR^d;\bR^d)$ with flow $\Phi_s$ defined in \eqref{eq : the flow}. The first variation of the free energy $\cF$ (associated with \eqref{eq : minimisation problem}) at $\nu$ along $\eta$, and denoted by $\delta \cF(\nu,\eta)$, is 
\begin{equation}
\label{eq:aux:gradcalF}
    \delta \cF(\nu,\eta)
    :=\frac{d}{d s} \cF\big((\Phi_{s})_{\#}\nu\big)\Big|_{s=0}
    =\int_{\bR^d} \nu(y) \Big\langle \eta(y),  \nabla f(y) \Big\rangle dy-\int_{\bR^d} p(\nu(y)) \divv (\eta (y)) dy.
\end{equation}
\end{lemma}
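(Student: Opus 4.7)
The plan is to split $\cF = F + U$ and compute the first variation of each piece separately, exploiting the standard change-of-variables formula under the flow $\Phi_s$. Since $\eta\in C^\infty_c(\bR^d;\bR^d)$, for $|s|$ small the map $\Phi_s$ is a smooth diffeomorphism of $\bR^d$ equal to the identity outside a compact set, and its Jacobian $J_s(y):=\det(D\Phi_s(y))$ is strictly positive with $J_0 \equiv 1$ and $\partial_s J_s|_{s=0} = \divv(\eta)$ (by Jacobi's formula together with $\partial_s D\Phi_s|_{s=0} = D\eta$).

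For the potential part, by definition of push-forward,
\begin{equation*}
    F\bigl((\Phi_s)_{\#}\nu\bigr) = \int_{\bR^d} f(\Phi_s(y))\,\nu(y)\,dy.
\end{equation*}
Since $\Phi_s$ agrees with the identity outside a fixed compact set and $f$ is Lipschitz, the integrand is dominated uniformly in $s$ near $0$ by an integrable function (using $\nu\in\cP_2^r(\bR^d)$), so one can differentiate under the integral sign to obtain $\int_{\bR^d}\langle \nabla f(y), \eta(y)\rangle \nu(y)\,dy$.

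For the internal energy, the push-forward density is $\nu_s(x) = \nu(\Phi_s^{-1}(x))/J_s(\Phi_s^{-1}(x))$, so performing the change of variables $x = \Phi_s(y)$,
\begin{equation*}
    U\bigl((\Phi_s)_{\#}\nu\bigr) = \int_{\bR^d} u\!\left(\frac{\nu(y)}{J_s(y)}\right) J_s(y)\,dy.
\end{equation*}
Differentiating the integrand pointwise at $s=0$, using $J_0=1$ and $\partial_s J_s|_{s=0}=\divv(\eta)$, and recalling $p(s)=u'(s)s - u(s)$, yields
\begin{equation*}
    \frac{d}{ds}\!\left[u\!\left(\tfrac{\nu}{J_s}\right) J_s\right]_{s=0}
    = -u'(\nu)\nu\,\divv(\eta) + u(\nu)\,\divv(\eta) = -p(\nu)\,\divv(\eta).
\end{equation*}
To justify interchange of derivative and integral, one restricts attention to $s$ in a small neighbourhood of $0$: then $J_s$ is uniformly bounded above and below by positive constants on the support of $\divv(\eta)$, so the $s$-derivative of the integrand is uniformly dominated (using $u\in C^2((0,\infty);\bR)$ on the relevant range of $\nu/J_s$, and the fact that $\divv(\eta)$ has compact support so only values of $y$ in that compact set contribute to the $s$-variation).

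Summing the two contributions gives the claimed formula. The only non-routine point is the domination required to differentiate under the integral sign in the internal energy; this is where the compact support of $\eta$ (which localises the variation and keeps $J_s$ uniformly comparable to $1$) together with the $C^2$ regularity of $u$ on $(0,\infty)$ is used. Since the result is standard (see for instance \cite[Proposition 3.5]{carlier2017convergence}), I would not pursue further technical refinements beyond these domination estimates.
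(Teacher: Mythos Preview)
Your proposal is correct and follows the standard approach; the paper itself does not supply a proof of this lemma but simply cites \cite[Proposition 3.5]{carlier2017convergence} as a well-established result. Your sketch (splitting $\cF=F+U$, using the push-forward/change-of-variables identities, and differentiating under the integral via domination using the compact support of $\eta$) is precisely the argument behind that cited proposition, so there is nothing to compare.
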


\begin{lemma}[Euler-Lagrange equation]\label{lemma : general E.L equation} Let $\mu\in\cP_2^r(\bR^d)$, and $h,\varepsilon$ be small enough.
 Let $\nu$ be the optimum in \eqref{eq : minimisation problem}, and let $\gamma$ be the corresponding  optimal plan in $\cW_{c_h,\varepsilon}(\mu,\nu)$. 
 Then, for any $\eta \in C_c^\infty(\bR^d;\bR^d)$ we have
\begin{equation}
  0=\frac{1}{2h}\int_{\bR^{2d}} \Big\langle \eta(y), \nabla_y   c_h(x,y)  \Big\rangle  d\gamma(x,y) - \frac{\varepsilon}{2h}\int_{\bR^d}  \nu(y) \divv (\eta (y)) dy  +\delta \cF(\nu,\eta).
  \label{eq Euler-Lagrange}
\end{equation}
In particular, by \eqref{equation assumption for the cost}, we have for any function $\varphi \in C^\infty_c(\bR^d)$ 
\begin{align}
\nonumber
  \frac{1}{h}\int_{\bR^{2d}} \Big\langle (y&-x) , \nabla \varphi(y) \Big\rangle 
  d\gamma(x,y)
  \\ \nonumber
  &
  =\int_{\bR^d} \nu(y)\Big\langle b(y) , \nabla \varphi(y) \Big\rangle dy +\frac{\varepsilon}{2h}\int_{\bR^d}  \nu(y) \divv \big( (A+B_h)\nabla \varphi (y) \big) dy 
  \\
  &\qquad 
  -\delta \cF(\nu,(A+B_h)\nabla \varphi)+ O(h)(1+\|\nabla \varphi \|_{\infty})\Big(M(\mu)+M(\nu)+1\Big)   +  O\big(\frac{1}{h}\big)(c_h,\gamma)
  \label{eq Euler-Lagrange nice form}
\end{align}
\end{lemma}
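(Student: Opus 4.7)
The plan is to carry out the standard perturbative argument on the minimiser $\nu$ using the flow generated by $\eta$. Let $\Phi_s$ be the flow of $\eta$ defined by \eqref{eq : the flow}, set $\nu_s := (\Phi_s)_\#\nu$, and build the perturbed plan $\gamma_s := (\mathrm{id}\otimes \Phi_s)_\#\gamma \in \Pi(\mu,\nu_s)$, which is admissible because $\Phi_s$ only acts on the second variable so the first marginal remains $\mu$. Since $\nu$ is the minimiser of \eqref{eq : minimisation problem} and $\gamma_s$ is (in general) suboptimal for $\cW_{c_h,\varepsilon}(\mu,\nu_s)$, one obtains the one-sided inequality
\[
\frac{1}{2h}(c_h,\gamma)+\frac{\varepsilon}{2h}H(\gamma)+\cF(\nu)\;\leq\;\frac{1}{2h}(c_h,\gamma_s)+\frac{\varepsilon}{2h}H(\gamma_s)+\cF(\nu_s),
\]
valid for all $s$ and with equality at $s=0$. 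Differentiating in $s$ at $s=0$ yields \eqref{eq Euler-Lagrange}.

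\textbf{Computing the three derivatives.} For the cost term, a direct differentiation under the integral (justified because $\eta\in C^\infty_c$ makes $\Phi_s$ smooth and $c_h(x,\cdot)$ differentiable by Assumption \ref{assumption for the cost}(1), while \eqref{eq 93} provides integrability against $\gamma\in\cP_2(\bR^{2d})$) gives
\[
\tfrac{d}{ds}(c_h,\gamma_s)\big|_{s=0} \;=\; \int_{\bR^{2d}}\langle \nabla_y c_h(x,y),\eta(y)\rangle\,d\gamma(x,y).
\]
For the entropy, I use absolute continuity of $\gamma$ (guaranteed by Lemma \ref{lemma existence of minimising couplings in the optimal transport problem}) and the change-of-variables formula to write $H(\gamma_s)=H(\gamma)-\int \log|\det D\Phi_s(y)|\,\gamma(x,y)\,dxdy$; differentiating and using $\tfrac{d}{ds}\log|\det D\Phi_s|\big|_{s=0}=\mathrm{div}(\eta)$ produces $\tfrac{d}{ds}H(\gamma_s)|_{s=0}=-\int \mathrm{div}(\eta(y))\,\nu(dy)$. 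The derivative of the free energy is the formula \eqref{eq:aux:gradcalF} supplied in the lemma just above, namely $\delta\cF(\nu,\eta)$. Combining these three identities and dividing through by the appropriate factors gives exactly \eqref{eq Euler-Lagrange}.

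\textbf{Passing to the nicer form \eqref{eq Euler-Lagrange nice form}.} Given $\varphi\in C_c^\infty(\bR^d)$, I apply the first identity \eqref{eq Euler-Lagrange} with the test vector field $\eta(y):=(A+B_h)\nabla\varphi(y)$. The key input is Assumption \ref{assumption for the cost}(2), which at each $(x,y)$ gives
\[
\langle \nabla_y c_h(x,y),(A+B_h)\nabla\varphi(y)\rangle \;=\; \langle 2(y-x)-2h\,b(y),\nabla\varphi(y)\rangle + R(x,y),
\]
with $|R(x,y)|\leq C h^{2}(1+\|\nabla\varphi\|_\infty)(\|x\|^2+\|y\|^2+1)+C\,c_h(x,y)$. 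Integrating this identity against $\gamma$, using that the marginals are $\mu,\nu$ so that $\int(\|x\|^2+\|y\|^2)\,d\gamma=M(\mu)+M(\nu)$, and dividing by $2h$ produces the terms $\frac{1}{h}\int\langle y-x,\nabla\varphi\rangle d\gamma - \int \langle b,\nabla\varphi\rangle\,\nu\,dy$ plus the error $O(h)(1+\|\nabla\varphi\|_\infty)(M(\mu)+M(\nu)+1)+O(1/h)(c_h,\gamma)$. Substituting into \eqref{eq Euler-Lagrange} and rearranging gives \eqref{eq Euler-Lagrange nice form}.

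\textbf{Where to be careful.} The only real subtleties are (i) justifying the entropy derivative, which needs the absolute continuity of $\gamma$ and enough integrability of $\log\gamma$ on the support of $\eta$ (both available since $\eta$ is compactly supported and $H(\gamma)<\infty$); and (ii) tracking constants in the error bounds arising from \eqref{equation assumption for the cost} so that the $O(h)$ and $O(1/h)$ factors come out as stated, which is bookkeeping rather than genuine difficulty. Everything else is a routine first-variation computation of the type standard in the JKO literature.
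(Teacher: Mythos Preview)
Your proposal is correct and follows essentially the same approach as the paper: perturb $\nu$ along the flow of $\eta$, push forward the optimal plan in the second variable, and differentiate the three terms (cost, entropy, free energy) at $s=0$; then specialise to $\eta=(A+B_h)\nabla\varphi$ and invoke \eqref{equation assumption for the cost} to obtain \eqref{eq Euler-Lagrange nice form}. The only cosmetic difference is that the paper, instead of asserting differentiability of $s\mapsto(c_h,\gamma_s)$ and $s\mapsto H(\gamma_s)$, takes the one-sided $\limsup$ of the difference quotient, applies Fatou to bound it from above, and then repeats the argument with $-\eta$ to upgrade the resulting inequality to equality; your direct differentiation is justified by the same ingredients you cite (compact support of $\eta$, differentiability of $c_h(x,\cdot)$, and the growth bound \eqref{eq 93}), so the two routes coincide in substance.
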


\begin{proof}
Let $\Phi$ be defined as in \eqref{eq : the flow}. Since $\nu$ is optimal for the minimisation problem \eqref{eq : minimisation problem} we have
\begin{align}
      \frac{1}{2h}\cW_{c_h,\varepsilon}(\mu,\nu)+\cF(\nu) \leq&  \frac{1}{2h}\cW_{c_h,\varepsilon}(\mu,(\Phi_s)_{\#}\nu)+\cF((\Phi_s)_{\#}\nu),
      \nonumber
\end{align}
which implies, 
      \begin{align} 
      0\leq&  \limsup_{s\to 0}  \frac{1}{2hs}\Big(\cW_{c_h,\varepsilon}(\mu,(\Phi_s)_{\#}\nu)-\cW_{c_h,\varepsilon}(\mu,\nu)\Big) + \delta \cF(\nu,\eta).
      \label{eq 78}
\end{align}
Let $\gamma$ be the optimal coupling in \eqref{eq : minimisation problem}. Then, for $\tilde{\Phi}_s:=(\text{id},\Phi_s)$, we know $(\tilde{\Phi}_s)_{\#}\gamma \in \Pi^r(\mu,(\Phi_s)_{\#}\nu)$  so we have 
\begin{align*}
    \limsup_{s\to 0}  \frac{1}{2hs}\Big(\cW_{c_h,\varepsilon}(\mu,(\Phi_s)_{\#}\nu)
    &
    -\cW_{c_h,\varepsilon}(\mu,\nu)\Big) 
    \\
    &
    \leq \limsup_{s\to 0}  \frac{1}{2hs}\Big( (c_h,(\tilde{\Phi}_s)_{\#}\gamma) -(c_h,\gamma) + \varepsilon \Big( H((\tilde{\Phi}_s)_{\#}\gamma)-H(\gamma)\Big) \Big).
    \end{align*}
    By Fatou's Lemma we have 
    \begin{align*}
         \limsup_{s\to 0}  \frac{(c_h,(\tilde{\Phi}_s)_{\#}\gamma) -(c_h,\gamma) }{2hs} \textcolor{black}{\leq} \frac{1}{2h} \int_{\bR^{2d}} \Big\langle \eta(y) , \nabla_y c_h(x,y)  \Big\rangle  d\gamma(x,y),
    \end{align*}
    and also
    \begin{align*}
   \limsup_{s\to 0}  \frac{\varepsilon \Big( H((\tilde{\Phi}_s)_{\#}\gamma)-H(\gamma)\Big) }{2hs}
    \leq&  
    \limsup_{s\to 0}  \frac{-\varepsilon}{2hs}\Big( \int_{\bR^{d}}  \log(|\dett D \Phi_s(y) |) -\log(|\dett D \Phi_0(y) |)d\nu(y) \Big)
    \\
     =&
     -\frac{\varepsilon}{2h}  \int_{\bR^{2d}} \nu(y)\divv \big(\eta(y) \big) dy.
\end{align*}
Injecting this result into \eqref{eq 78} and substituting $\eta$ for $-\eta$ gives the result. 
\end{proof}

\subsection{A priori estimates}\label{A-priori estimates}

In this section we provide a number of a priori estimates which will help to establish the compactness arguments of Section \ref{section the limiting procedure}. Throughout this section the results hold for each fixed $k\in \bN$, that is, for each $h_k,\varepsilon_k,N_k$ of the sequences satisfying \eqref{eq: assumption on scaling}, and the sequence $\{\rho^n_{h_k,\varepsilon_k}\}_{n=0}^{N_k-1}$ built from the scheme \eqref{eq: regularized scheme} with the associated sequence of optimal couplings $\{\gamma^n_{h_k,\varepsilon_k}\}_{n=1}^{N_k}$. For notational convenience we omit the dependence on $k$ and simply write $h,\varepsilon,N,\{\rho^n\}_{n=0}^{N-1},\{\gamma^n\}_{n=1}^{N}$.

\begin{lemma}\label{lemma : change of variable satisfy assumption}
For all $n\in \{1,\ldots, N\}$, we have
\begin{align}
    (c_h,\gamma^n) \leq& Ch^2 \Big( M(\rho^{n-1})+1 \Big) \label{kramer : eq : equation 59, with external}
    -
     \varepsilon H(\rho^n) + 2h\Big(\cF(\rho^{n-1})-\cF(\rho^n)\Big), 
\end{align}
for $C>0$ a constant depending only on $\rho_0$ and the constants in the assumptions.
\end{lemma}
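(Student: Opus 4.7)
The plan is to exploit the optimality of the pair $(\rho^n,\gamma^n)$ in the regularised JKO step \eqref{eq: regularized scheme} against an explicitly constructed competitor, and to pay the inevitable errors using the change-of-variable map $\cT_h$ together with a small Gaussian smoothing. For a fixed $\sigma>0$ (to be tuned at the end in terms of $h$ and $\varepsilon$), let $g$ denote the standard centred Gaussian density on $\bR^d$ and $g_\sigma$ the Gaussian density with covariance $\sigma^2 I$. Define $\Psi_\sigma:\bR^{2d}\to\bR^{2d}$ by $\Psi_\sigma(x,z):=(x,\cT_h(x)+\sigma z)$, and take the test coupling
$$
\tilde\gamma := (\Psi_\sigma)_{\#}\bigl(\rho^{n-1}\otimes g\bigr),
$$
which has first marginal $\rho^{n-1}$ and second marginal $\tilde\rho^n = (\cT_h)_{\#}\rho^{n-1} * g_\sigma\in\cP_2^r(\bR^d)$. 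By optimality of $(\rho^n,\gamma^n)$,
$$
(c_h,\gamma^n)+\varepsilon H(\gamma^n)+2h\cF(\rho^n)\;\le\;(c_h,\tilde\gamma)+\varepsilon H(\tilde\gamma)+2h\cF(\tilde\rho^n),
$$
and adding $\pm\, 2h\cF(\rho^{n-1})$ and rearranging, it suffices to bound
$(c_h,\tilde\gamma)+\varepsilon(H(\tilde\gamma)-H(\gamma^n))+2h(\cF(\tilde\rho^n)-\cF(\rho^{n-1}))$
by $Ch^2(M(\rho^{n-1})+1)-\varepsilon H(\rho^n)$.

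Assembling this bound will proceed through four essentially independent estimates. First, direct integration of \eqref{kramer : eq : equation 61, with external} against $\rho^{n-1}\otimes g$ yields $(c_h,\tilde\gamma)\le C(\sigma h^{-\beta}+h^2(M(\rho^{n-1})+1))$. Second, since $\tilde\gamma$ has the factored density $\rho^{n-1}(x)\,g_\sigma(y-\cT_h(x))$, a direct computation gives the explicit identity
$$
H(\tilde\gamma)=H(\rho^{n-1})+H(g_\sigma)=H(\rho^{n-1})-d\log\sigma+C_d.
$$
Third, the standard sub-additivity of differential entropy $H(\gamma^n)\ge H(\rho^{n-1})+H(\rho^n)$, obtained from the non-negativity of $\mathrm{KL}(\gamma^n\,\|\,\rho^{n-1}\otimes\rho^n)$, combines with the previous identity to produce $\varepsilon(H(\tilde\gamma)-H(\gamma^n))\le \varepsilon d|\log\sigma|+C\varepsilon-\varepsilon H(\rho^n)$, which already isolates the target term $-\varepsilon H(\rho^n)$. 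Fourth, the one-sided bound $\cF(\tilde\rho^n)-\cF(\rho^{n-1})\le C(\sigma h^{-\beta}+h(M(\rho^{n-1})+1))$ splits into its potential-energy part, controlled directly by integrating \eqref{eq 81} against $\rho^{n-1}\otimes g$, and its internal-energy part $U(\tilde\rho^n)-U(\rho^{n-1})$: here Jensen's inequality applied to the convex $u$ with the representation $\tilde\rho^n=(\cT_h)_{\#}\rho^{n-1}*g_\sigma$ gives $U(\tilde\rho^n)\le U((\cT_h)_{\#}\rho^{n-1})$, and the continuity of $D\cT_h$ combined with a change-of-variable argument reduces $U((\cT_h)_{\#}\rho^{n-1})-U(\rho^{n-1})$ to an $O(h)$ remainder.

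Finally, $\sigma$ is tuned so that the constraints $\sigma h^{-\beta}=O(h^2)$ and $\varepsilon|\log\sigma|=O(h^2)$ hold simultaneously; a choice such as $\sigma=h^{\beta+2}$ together with the scaling $\varepsilon|\log\varepsilon|\le Ch^2$ from Assumption~\ref{assumption on scaling} makes every error contribution collapse into either $Ch^2(M(\rho^{n-1})+1)$ or $-\varepsilon H(\rho^n)$, delivering \eqref{kramer : eq : equation 59, with external}.

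\textbf{Main obstacle.} The technically subtle step is the joint calibration of $\sigma$. The transport-cost estimate demands $\sigma$ significantly smaller than $h^{\beta}$, yet the Gaussian contributes an $\varepsilon d|\log\sigma|$ penalty that must in turn be absorbed via the tight scaling $\varepsilon|\log\varepsilon|\le Ch^2$; verifying that a single $\sigma$ controls transport, entropic gap, and potential displacement simultaneously, and that the $\cT_h$-push-forward does not spoil the internal-energy comparison, is the most delicate part of the argument.
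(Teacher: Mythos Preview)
Your argument is essentially the paper's: build the competitor $\tilde\gamma(x,y)=\rho^{n-1}(x)\,g_\sigma(y-\cT_h(x))$, bound its cost and potential-energy displacement via \eqref{kramer : eq : equation 61, with external} and \eqref{eq 81}, compute $H(\tilde\gamma)=H(\rho^{n-1})-d\log\sigma+\text{const}$, use $H(\gamma^n)\ge H(\rho^{n-1})+H(\rho^n)$, and tune $\sigma$ (the paper takes $\sigma=\varepsilon^{1+\beta/2}$; your $\sigma=h^{\beta+2}$ works equally well under Assumption~\ref{assumption on scaling}).

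The one place where you deviate is the internal-energy comparison. The paper applies Jensen in a single line to obtain $U(\tilde\rho^n)\le U(\rho^{n-1})$ with \emph{no} remainder, while you factor through $(\cT_h)_\#\rho^{n-1}$ and claim an $O(h)$ leftover from ``continuity of $D\cT_h$''. That last step is not justified by Assumption~\ref{Assumption change of var for f and c - for regular.} alone: continuity of $D\cT_h$ gives no quantitative control on $|\det D\cT_h|-1$, and without it $U((\cT_h)_\#\rho^{n-1})-U(\rho^{n-1})$ can be of order one. What actually makes both the paper's one-step Jensen and your two-step route work is that $|\det D\cT_h|=1$ in every concrete example (identity, shears), which renders your remainder identically zero; the paper's direct Jensen is therefore the cleaner formulation, and you should simply invoke $|\det D\cT_h|=1$ rather than appeal to an $O(h)$ correction.
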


In the well established JKO procedure \cite[Eqs.~(42)-(45)]{jordan1998variational} one compares $\frac{1}{2h}W^2_{2}(\rho^{n-1},\rho^n)+\cF(\rho^n)$ against $\frac{1}{2h}W^2_{2}(\rho^{n-1},\rho^{n-1})+\cF(\rho^{n-1})$. The term $W^2_{2}(\rho^{n-1},\rho^{n-1})$ is zero, and hence one would end up with a control of \textcolor{black}{$W_2(\rho^{n-1},\rho^n)$} in terms of the free energy. However, \textcolor{black}{in the present work}, since $\cW_{c_{h},\varepsilon}$ is not a metric, we need to pick a new distribution to compare the performance of $\rho^n$ against. We judiciously choose such a distribution as to make the cost $c_h$ of transporting mass free. 
\begin{proof}
This proof has two steps. First, is the choice of the distribution $\rho_\sigma$ against which to compare $\rho^n$. 
The second part is carrying out the said comparison.

\textit{Step 1: the candidate distribution $\rho_\sigma$ and its properties.} 
Let $G\in C^\infty_c (\mathbb{R}^d)$ be a probability density, such that $M(G)=1$, $H(G) <\infty$. For a scaling parameter $\sigma>0$, to be chosen later, define $G_\sigma(\cdot):=\sigma^{-d}G(\frac{\cdot}{\sigma})$. For $\cT_h$ defined in Assumption \ref{Assumption change of var for f and c - for regular.} define
\begin{equation*}
   \gamma_\sigma(x,y) 
   := 
   \rho^{n-1}(x) G_\sigma\big(y-\mathcal{T}_h(x)\big),
\end{equation*}
as a joint distribution with first marginal $\rho^{n-1}$, and second marginal 
\begin{equation*}
\rho_\sigma(y):=\int \gamma_\sigma(x,y) dx.
\end{equation*}
Then, the change of variables $y=\mathcal{T}_h(x)+\sigma z$ and leaving $x$ unchanged, has Jacobian 
\begin{equation}
  J(x,z):=  \begin{pmatrix} D \mathcal{T}_h(x) & \sigma\\
1 & 0
\end{pmatrix},
\end{equation}
with determinant $\big|\dett J(x,z)\big|=\sigma^{d}$. Where the entries $\sigma,1,0$ are  $d\times d$-dimensional matrices of that entry multiplied by the identity matrix. Applying the change of variable and calculating we have 
\begin{align}
\nonumber 
    (c_h,\gamma_\sigma)=& \int_{\bR^d} c_h(x,y)  \rho^{n-1}(x) G_\sigma(y-\mathcal{T}_h(x)) dxdy
    \\
    =& \int_{\bR^d} c_h(x,\mathcal{T}_h(x)+\sigma z)  \rho^{n-1}(x)  G(z)  dxdz.
\end{align}
Hence by Assumption \ref{Assumption change of var for f and c - for regular.}, it follows 
\begin{align}
\nonumber
    (c_h,\gamma_\sigma)
    \leq& C \int_{\bR^{2d}}\Big( \frac{\sigma}{h^\beta }\Big(\|z\|^2+1\Big) +h^2\Big(\|x\|^2+1\Big)\Big)\rho^{n-1}(x) G(z) dxdz
    \\
    \nonumber
    =& C \Big( \frac{\sigma}{h^\beta}\Big(\int_{\bR^d}\|z\|^2G(z)dz+1\Big)+h^2 \Big(\int_{\bR^{2d}}\|x\|^2 \rho^{n-1}(x)dx+1 \Big) \Big)
    \\
    =&C\Big(\frac{\sigma}{h^\beta }+h^2 \big(M(\rho^{n-1})+1\big) \Big) \label{kramer : eq : equation 60, with external}.
\end{align}
Moreover, a straightforward calculation gives
\begin{align}
\label{kramer : eq : equation 65, with external}
    H(\gamma_\sigma)=&H(\rho^{n-1})-d\log \sigma+H(G).
\end{align}
Again by Assumption \ref{Assumption change of var for f and c - for regular.} and the change of variables above we have the following estimate for the potential energy 
\begin{align}
    F(\rho_\sigma)=&\int_{\bR^d} f(y) \rho_\sigma(y) dy\notag
    \\
    \leq& \int_{\bR^{2d}}\Big(  |f(y)-f(x)|+f(x)\Big) \rho^{n-1}(x) G_\sigma(y-\mathcal{T}_h(x)) dxdy\notag
    \\
    =&  \int_{\bR^{2d}} \Big(  |f(\cT_h(x)+\sigma z)-f(x)|\Big) \rho^{n-1}(x) G(z) dxdy + \int_{\bR^{2d}} f(x) \rho^{n-1}(x) G(z) dxdz\notag
    \\
    \leq& C \int_{\bR^{2d}} \Big( \frac{\sigma}{h^\beta }\Big(\|z\|^2+1\Big) +h\Big(\|x\|^2+1\Big)\Big)\rho^{n-1}(x) G(z) dxdz+  F(\rho^{n-1})\notag
    \\
    \leq&  C \Big(   \frac{\sigma}{h^\beta } + h\Big(  M(\rho^{n-1})+1 \Big) \Big)+F(\rho^{n-1}).
    \label{eq 41}
\end{align}
Jensen's inequality implies (by the convexity of $u$) that for the internal energy 
\begin{equation}\label{eq 40}
    U(\rho_\sigma)=\int_{\bR^d}u\Big(\int_{\bR^d} \gamma_{\sigma} (x,y) dx\Big) dy \leq \int_{\bR^{2d}} u(\rho^{n-1})G_{\sigma}(y-\cT_h(x)) dx dy= U(\rho^{n-1}).
\end{equation}
Therefore, plugging \eqref{eq 41} and \eqref{eq 40} together yields 
\begin{align}
\label{kramer : eq : equation 64, with external}
\nonumber 
\cF(\rho_\sigma)
    & \leq  
    C \Big(   \frac{\sigma}{h^\beta } + h\Big(  M(\rho^{n-1})+1 \Big) \Big)+F(\rho^{n-1})+U(\rho^{n-1})
    \\
    &
    = C \Big(   \frac{\sigma}{h^\beta } + h\Big(  M(\rho^{n-1})+1 \Big) \Big) +\cF(\rho^{n-1}).
\end{align}

\textit{Step 2: comparing $\rho_\sigma$ and $\rho^n$.} Since the $\{\rho^n\}$ are built from the scheme \eqref{eq: regularized scheme}, and $\gamma_\sigma$ is a coupling of $\rho^{n-1}$ and $\rho_\sigma$, we have 
\begin{align}
\label{kramer : eq : equation 71, with external}
    \frac{1}{2h}\Big((c_h,\gamma^n)+\varepsilon H(\gamma^n) \Big)  +\cF(\rho^n) 
    \leq  
    \frac{1}{2h} \cW_{c_h,\varepsilon}(\rho^{n-1},\rho_\sigma)+\cF(\rho_\sigma)
    \leq  
    \frac{1}{2h}\Big((c_h,\gamma_\sigma) + \varepsilon H(\gamma_\sigma) \Big) + \cF(\rho_\sigma).
\end{align}
Substituting the above calculations   \eqref{kramer : eq : equation 60, with external}, \eqref{kramer : eq : equation 65, with external} and \eqref{kramer : eq : equation 64, with external} into \eqref{kramer : eq : equation 71, with external} we get 
\begin{align}
\nonumber 
       \frac{1}{2h}\Big((c_h,\gamma^n)+\varepsilon H(\gamma^n) \Big)  +\cF(\rho^n)
       \leq
       & \frac{1}{2h} \Big( C\Big(\frac{\sigma}{h^\beta }+h^2 \Big(M(\rho^{n-1})+1\Big) \Big) + \varepsilon \Big( H(\rho^{n-1})-d\log \sigma+H(G) \Big) \Big)
       \\
       &+ C \Big(   \frac{\sigma}{h^\beta } + h\Big(  M(\rho^{n-1})+1 \Big) \Big)+\cF(\rho^{n-1}).
\end{align}
Rearranging the terms and using that $H(\gamma^n)\geq H(\rho^n)+H(\rho^{n-1})$ we obtain 
\begin{align}
\nonumber 
       (c_h,\gamma^n)  \leq&  C\Big(\frac{\sigma}{h^\beta }+h^2 \Big(M(\rho^{n-1})+1\Big) \Big) + \varepsilon \Big( - H(\rho^{n})-d\log \sigma+H(G) \Big)
       \\
       &+2h C \Big(   \frac{\sigma}{h^\beta } + h\Big(  M(\rho^{n-1})+1 \Big) \Big)+2h\Big( \cF(\rho^{n-1}) - \cF(\rho^n) \Big).
\end{align}

Now we are free to choose \textcolor{black}{$\sigma = \varepsilon^{1+\frac{\beta}{2}}$}. Recall that the scaling \eqref{eq: assumption on scaling} implies $ \frac{\sigma}{h^\beta }\leq Ch^2$ and $-\varepsilon d\log\sigma \leq (1+\frac{\beta}{2})  \varepsilon d\log |\varepsilon| $, we thus have
\begin{align*}
       (c_h,\gamma^n)  \leq&  Ch^2 \Big(M(\rho^{n-1})+1 \Big) 
    - \varepsilon H(\rho^{n})+2h\Big( \cF(\rho^{n-1}) - \cF(\rho^n) \Big).
\end{align*}
\end{proof}

From Lemma \ref{lemma : change of variable satisfy assumption} we are able to establish uniform boundedness of the 2nd moment, energy and entropy, of the solutions to the variational scheme \eqref{eq: regularized scheme}. This is the result we present next. One should note that in the following bounds the constant $C$ depends on  the dimension $d$, the constants of our assumptions, the initial data $\rho^0$, but importantly is independent of $k$. We mention that the following proof differs from classical a-priori bounds for a JKO scheme since $c_h$ is not assumed to be a metric. We follow a similar strategy to that found in \cite{duong2014conservative,Huang00}, first obtaining bounds locally and then extending them over the full time interval.

\begin{lemma}[Bounded Moments, Energy, and Entropy]
\label{kramer : lemma : a-priori bounds general}
For small enough $h,\varepsilon>0$, we have for all $n \in \{ 1,\ldots,N\}$ 
\begin{align}
\label{eq momments energy bound}
    M(\rho^n),\cF(\rho^n),- H(\rho^n)<C.
\end{align}
\begin{proof}

\color{black}
We begin by finding an $h_0,T_0$ independent of the initial data, and a $C_0$ depending only on $M(\rho^0),\cF(\rho^0)$ such that
\begin{align}
    M(\rho^n),\cF(\rho^n),- H(\rho^n)<C_0.
\end{align}
holds for all $n\leq \big\lceil \frac{T_0}{h} \big\rceil$ with $h\leq h_0$. Now for any $i\in \{1,\ldots,N\}$

\begin{align}
    M(\rho^i)^{\frac{1}{2}} \leq&  M(\rho^{i-1})^{\frac{1}{2}}+ W_2(\rho^{i-1},\rho^{i})
    \label{eq change1}
    \\
   \leq&  M(\rho^{i-1})^{\frac{1}{2}}+C \Big( (c_h,\gamma^i)+h^2(M(\rho^{i-1})+M(\rho^i) \Big)^{\frac{1}{2}}
   \label{eq change2}
    \\
   \leq&  M(\rho^{i-1})^{\frac{1}{2}}+C \Big( (c_h,\gamma^i)^{\frac{1}{2}}+h(M(\rho^{i-1})^{\frac{1}{2}}+M(\rho^i)^{\frac{1}{2}}) \Big),
   \nonumber
\end{align}
where in \eqref{eq change1} we have used the Minkowski integral inequality, and in \eqref{eq change2} we have used Lemma \ref{lemma :  our cost and the wasserstein}. Summing over $i=1,\ldots, n$, and denoting $M^0=M(\rho^0)$ we get 
\begin{align}
    M(\rho^n)^{\frac{1}{2}} \leq&  C    \Big( (M^0)^{\frac{1}{2}} + \sum_{i=1}^n (c_h,\gamma^i)^{\frac{1}{2}}+h\sum_{i=1}^n M(\rho^{i})^{\frac{1}{2}} \Big).
    \label{eq change4}
\end{align}
Squaring \eqref{eq change4}, and then using Cauchy–Schwarz inequality we get 
\begin{align*}
    M(\rho^n)\leq&  C    \Big( M^0 + \big(\sum_{i=1}^n (c_h,\gamma^i)^{\frac{1}{2}}\big)^2+h^2\big(\sum_{i=1}^nM(\rho^{i})^{\frac{1}{2}}\big)^2\Big)
    \\
    \leq& C  \Big( M^0 + n\sum_{i=1}^n (c_h,\gamma^i)+h^2 n \sum_{i=1}^nM(\rho^{i})\Big).
\end{align*}
Now applying Lemma \eqref{lemma : change of variable satisfy assumption}, and recalling $N h =T$,  we have 

\begin{align*}
    M(\rho^n)\leq&  
     C  \Big( M^0 - n \varepsilon\sum_{i=1}^n
      H(\rho^i) + 2hn\Big(\cF(\rho^0)-\cF(\rho^n)\Big)
    +h  \sum_{i=1}^nM(\rho^{i})\Big),
\end{align*}
Next recalling that $f$ is positive, and using Lemma \ref{lemma : appendix : entropy bound} twice, we can deduce 
\begin{align}
\label{eq change5}
    M(\rho^n)\leq&  
     C_1  \Big( C_0 + \varepsilon n\sum_{i=1}^n  (1+M(\rho^i))^{\alpha} + (1+M(\rho^n))^{\alpha}
    + h  \sum_{i=1}^nM(\rho^{i})\Big),
\end{align}
for some fixed constant $C_0>0$ depending only on $M(\rho^0),\cF(\rho^0)$, and a fixed the constant $C_1>0$ independent of the initial condition.
Fixing a time horizon $T_0$ small enough, and $N_0:=\big\lceil \frac{T_0}{h} \big\rceil $, we let $h_0$ be such that for all $h \leq h_0$, $N_0 h \leq  2T_0$. Therefore, for all $h\leq h_0$, and any $n_0 \leq N_0$, summing \eqref{eq change5} over $n=1,\ldots,n_0$, 
\begin{equation*}
   \sum_{n=1}^{n_0}  M(\rho^n)\leq 
     C_1  \Big( n_0 C_0 +  (n_0^2\varepsilon+1) \sum_{n=1}^{n_0}   (1+M(\rho^{n}))^{\alpha} 
    + h n_0 \sum_{n=1}^{n_0} M(\rho^{n})\Big),
\end{equation*}
Choosing $T_0$ small enough that $C_1 h N_0 \leq \frac{1}{2}$, one can see that
\begin{equation}
\label{eq change6}
  \frac{1}{2} \sum_{n=1}^{n_0}  M(\rho^n)\leq  
     C_1  \Big( n_0 C_0 +  (n_0^2\varepsilon+1) \sum_{n=1}^{n_0}   (1+M(\rho^{n}))^{\alpha}\Big).
\end{equation}
Substituting \eqref{eq change6} into the last term in \eqref{eq change5} we have, for all $n \leq N_0$ and $h \leq h_0$,
\begin{align}
    M(\rho^n)\leq& C_1\Bigg(C_0+ \varepsilon n\sum_{i=1}^n  (1+M(\rho^i))^{\alpha} + (1+M(\rho^n))^{\alpha}+
    2 h    C_1  \Big( n C_0 +  (n^2\varepsilon+1) \sum_{n=1}^{n}   (1+M(\rho^{n}))^{\alpha}\Big) \Bigg).
     \nonumber
\end{align}
Using $nh\leq T$ in conjunction with the scaling \eqref{eq: assumption on scaling}, specifically $\varepsilon \leq Ch^2$, the above inequality simplifies to 
\begin{align}
\label{eq change8}
    M(\rho^n)\leq&  
     \tilde{C}_1  \Big( \tilde{C}_0 +h\sum_{i=1}^n  (1+M(\rho^i))^{\alpha} \Big),
\end{align} 
for some new fixed constant $\tilde{C}_0>0$ depending only on $M(\rho^0),\cF(\rho^0)$, and a fixed the constant $\tilde{C}_1>0$ independent of the initial condition. Let $\bar{M}=\max_{n \leq N_0} M(\rho^n)$. Since \eqref{eq change8} holds for all $n\leq N_0$, this implies that

\begin{align}\label{eq z12}
    \bar{M}\leq& 
     \tilde{C}_1  \Big( \tilde{C}_0 +hN_0 (1+\bar{M})^{\alpha} \Big).
\end{align}
Choose $T_0$ small enough that $\tilde{C}_1 hN_0\leq \frac{1}{2}$. From \eqref{eq z12} we can use the Bernoulli inequality to claim, for some new fixed constant $C_0>0$ depending only on $M(\rho^0),\cF(\rho^0)$, that for all $h \leq h_0,n\in\{0,\ldots,N_0\}$ with $N_0=\lceil \frac{T_0}{h} \rceil$ 
\begin{equation}
\label{eq change11}
    M(\rho^n)\leq C_0,-H(\rho^n)\leq C_0,
\end{equation}
where we recall that $T_0,h_0$ are all independent of the initial condition. 
Now we obtain a similar bound for $\cF(\rho^n)$. Returning to Lemma \ref{lemma : change of variable satisfy assumption}, and using the non-negativity of $c_h$, we see that for any $i\in\{1,\ldots,N\}$
 \begin{align}
 \label{eq change10}
     h\big(\cF(\rho^i)-\cF(\rho^{i-1})\big)  \leq& Ch^2\big(1 +M(\rho^i)\big) -\varepsilon H(\rho^i) .
     \end{align}
     Upon rearranging \eqref{eq change10}, employing \eqref{eq appendix 1}, and using the Bernoulli inequality, we get that 
     
     \begin{align*}  \cF(\rho^i)-\cF(\rho^{i-1})\leq& Ch\big(1+M(\rho^i) \big).
\end{align*}
Summing the above inequality over $i=1,\ldots,n \leq N_0$ yields 
\begin{align*}  \cF(\rho^n)\leq& C h \sum_{i=1}^n \big(1+M(\rho^i) \big) + \cF( \rho^0).
\end{align*}
Now  we can use \eqref{eq change11} ,  and that $h N\leq T$ to obtain
\begin{equation}\label{eq uniform moments new proof}
    \cF(\rho^n)\leq C_0 
\end{equation}
for all $n\leq N_0$. Since the $T_0$ and $h_0$ we have chosen are independent of the initial data we can extend the bound \eqref{eq uniform moments new proof} to all $n\in \{1,\ldots, N\}$ similarly as has been done in \cite[Lemma 5.3]{Huang00}, see also \cite{duong2014conservative}, which completes the proof.

\color{black}
\end{proof}
\end{lemma}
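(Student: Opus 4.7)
The plan is to combine the two one-step estimates already at hand. Lemma \ref{lemma : change of variable satisfy assumption} gives
$$
(c_h,\gamma^n) \leq Ch^2(M(\rho^{n-1})+1) - \varepsilon H(\rho^n) + 2h\big(\cF(\rho^{n-1}) - \cF(\rho^n)\big),
$$
while Lemma \ref{lemma :  our cost and the wasserstein} provides $W_2^2(\rho^{n-1},\rho^n) \leq C\big((c_h,\gamma^n) + h^2(M(\rho^{n-1})+M(\rho^n))\big)$. Together with the entropy bound $-H(\rho) \leq C(1+M(\rho))^\alpha$ for some $\alpha\in(0,1)$ and the scaling $\varepsilon \leq Ch^2$, these ingredients should close a Gronwall-type argument. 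The central difficulty is that $c_h$ is \emph{not} a metric, so the usual triangle-inequality telescoping of JKO is unavailable; one must pass through $W_2$, at the cost of parasitic $h^2(M(\rho^{n-1})+M(\rho^n))$ terms which cannot be absorbed on an arbitrary-length time interval in one shot.

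First I would iterate Minkowski's inequality for $W_2$ along the chain $\rho^0,\rho^1,\dots,\rho^n$ and invoke Lemma \ref{lemma :  our cost and the wasserstein} to get
$$
M(\rho^n)^{1/2} \leq M(\rho^0)^{1/2} + C\sum_{i=1}^n\Big((c_h,\gamma^i)^{1/2} + h\big(M(\rho^{i-1})^{1/2}+M(\rho^i)^{1/2}\big)\Big).
$$
Squaring and applying Cauchy--Schwarz yields $M(\rho^n) \leq C\big(M(\rho^0) + n\sum_i (c_h,\gamma^i) + h^2 n\sum_i M(\rho^i)\big)$. Plugging in the one-step cost bound, the $\cF$ differences telescope, the entropy term is controlled via $-\varepsilon \sum_i H(\rho^i) \leq C\varepsilon \sum_i(1+M(\rho^i))^\alpha$, and using $\varepsilon\leq Ch^2$ together with $hN\leq T$ produces an inequality of the form
$$
M(\rho^n) \leq \tilde C_1\Big(\tilde C_0 + h\sum_{i=1}^n (1+M(\rho^i))^\alpha\Big),
$$
with $\tilde C_0$ depending on $M(\rho^0)$ and $\cF(\rho^0)$, and $\tilde C_1$ depending only on the structural constants.

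The main obstacle now is that a direct discrete Gronwall argument does not close this inequality over the full horizon $[0,T]$ because of the $h\sum M(\rho^i)$ term inherited from the non-metric comparison. My remedy is a local-in-time strategy in the spirit of \cite{duong2014conservative,Huang00}. Fix a window $[0,T_0]$ with $N_0:=\lceil T_0/h\rceil$ and choose $T_0$ small enough, depending only on the structural constants (not on initial data), so that $\tilde C_1 h N_0 \leq \tfrac{1}{2}$. This allows absorption of the linear piece $\max_{n\leq N_0}M(\rho^n)$ on the right; Bernoulli's inequality $(1+s)^\alpha \leq 1 + \alpha s$ then disposes of the sublinear entropy contribution, giving $\max_{n\leq N_0} M(\rho^n) \leq C_0$. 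The bound $-H(\rho^n)\leq C_0$ is immediate from the entropy inequality, and rearranging Lemma \ref{lemma : change of variable satisfy assumption} to $\cF(\rho^n)-\cF(\rho^{n-1}) \leq Ch(1+M(\rho^n))$ and summing yields the bound on $\cF(\rho^n)$ on $[0,T_0]$.

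Finally, because $T_0$ has been chosen independently of the initial data, one can iterate the above argument a finite number of times, using $\rho^{N_0}$ as the new starting point at each stage, to propagate the estimates to the full interval $[0,T]$. The non-metric nature of $c_h$ is the sole reason the local-then-iterate detour is needed; the fact that $T_0$ depends only on the structural constants is precisely what makes the iteration consistent and yields a uniform constant $C$ independent of $k$.
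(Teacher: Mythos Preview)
Your proposal is correct and follows essentially the same route as the paper: Minkowski plus the cost--Wasserstein comparison, square and Cauchy--Schwarz, insert the one-step bound from Lemma~\ref{lemma : change of variable satisfy assumption} and telescope $\cF$, then run the local-in-time argument with $T_0$ chosen small independently of the data, close via Bernoulli, and iterate. One minor slip: the displayed inequality you claim, $M(\rho^n)\leq\tilde C_1(\tilde C_0+h\sum_i(1+M(\rho^i))^\alpha)$, should still carry the linear term $h\sum_i M(\rho^i)$ at that stage (you yourself refer to absorbing it two lines later), but your surrounding text makes clear you handle it exactly as the paper does.
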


\begin{corollary}[The total sum of the costs]\label{corollary sum of the costs} Let $h$ be sufficiently small, then we have 
\begin{equation*}
    \sum_{i=1}^{N} (c_h,\gamma^n) \leq C h.
\end{equation*}

\begin{proof}
Summing \eqref{kramer : eq : equation 59, with external} over $n$, using the bounds of Lemma \ref{kramer : lemma : a-priori bounds general}, and the scaling Assumption \ref{assumption on scaling} yields the result.
\end{proof}
\end{corollary}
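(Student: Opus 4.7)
The plan is to sum the per-step inequality from Lemma \ref{lemma : change of variable satisfy assumption} over $n=1,\ldots,N$ and then dispatch each of the three resulting contributions using the a priori bounds of Lemma \ref{kramer : lemma : a-priori bounds general} together with the scaling constraint \eqref{eq: assumption on scaling}. Writing the inequality
\begin{equation*}
(c_h,\gamma^n) \leq Ch^2\big(M(\rho^{n-1})+1\big) - \varepsilon H(\rho^n) + 2h\big(\cF(\rho^{n-1})-\cF(\rho^n)\big),
\end{equation*}
and summing from $n=1$ to $n=N$, we obtain three terms to bound.

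The moment contribution $\sum_{n=1}^{N} Ch^2(M(\rho^{n-1})+1)$ is straightforward: the uniform bound $M(\rho^{n-1})\leq C$ from Lemma \ref{kramer : lemma : a-priori bounds general} gives $Ch^2 \cdot N = CTh$, producing a term of order $h$. The third contribution is a telescoping sum, collapsing to $2h\big(\cF(\rho^0)-\cF(\rho^N)\big)$; the hypothesis $\cF(\rho_0)<\infty$ controls $\cF(\rho^0)$, while Lemma \ref{kramer : lemma : a-priori bounds general} gives an upper bound on $\cF(\rho^N)$ and a lower bound follows from $f\geq 0$ together with the entropy bound combined with \eqref{eq : an assumption on u} (so that $\cF(\rho^N) \geq -C(1+M(\rho^N))^\alpha \geq -C$). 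Thus this term is also of order $h$.

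The only step requiring the scaling assumption is the entropy contribution $-\varepsilon \sum_{n=1}^{N} H(\rho^n)$. Using $-H(\rho^n)\leq C$ from Lemma \ref{kramer : lemma : a-priori bounds general}, this is bounded by $\varepsilon N C = \varepsilon C T / h$, and invoking $\varepsilon \leq Ch^2$ from Assumption \ref{assumption on scaling} gives a bound of order $h$. Combining the three bounds yields $\sum_{n=1}^{N} (c_h,\gamma^n)\leq Ch$ as claimed. No step here is genuinely difficult — this is a bookkeeping corollary — but the interplay between $\varepsilon$, $h$ and $N$ in the entropy term is the only place where one actually uses the scaling hypothesis, and it confirms why \eqref{eq: assumption on scaling} is the right balance.
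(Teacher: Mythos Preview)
Your proof is correct and follows exactly the approach indicated in the paper's one-line proof: sum \eqref{kramer : eq : equation 59, with external} over $n$, use the uniform moment/energy/entropy bounds of Lemma \ref{kramer : lemma : a-priori bounds general} together with $Nh=T$, and invoke the scaling $\varepsilon\leq Ch^2$ to control the entropy sum. You have simply made explicit the three contributions and how each is bounded, which is precisely what the paper leaves to the reader.
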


\subsection{The limiting procedure}\label{section the limiting procedure}
Let $\{\rho^n_{h_{k},\varepsilon_{k}}\}_{n=0}^{N_k}$ be the solution of our scheme \eqref{eq: regularized scheme} with associated optimal plans $\{\gamma^n_{h_k,\varepsilon_k}\}_{n=1}^{N_k}$, and interpolation $\rho_k$ defined in \eqref{eq: time interpolation}. For notational convenience throughout this section we write $\rho^n_{h_k,\varepsilon_k}=\rho^n_k$, $\gamma^n_{h_k,\varepsilon_k}=\gamma^n_k $. 
As is common in the JKO procedure, the a priori estimates give us enough compactness to pass, at least along a subsequence, to the limit of $\rho_k$ to some $\rho$ in $L^1((0, T)\times \bR^d)$. We show that $\rho$ is in fact a weak solution of \eqref{eq:Formal-Evolution}. 

\begin{lemma} The sequence of interpolations $\rho_k : [0,T] \times \bR^d \to \bR$ constructed from \eqref{eq: time interpolation} satisfy for any $\varphi \in C^\infty_c(\bR^d)$.
\begin{equation}
\label{eq 92}
 \int_0^T \int_{\bR^d} \rho_k(t,x) \Big( \frac{\varphi(t+h_k,x)-\varphi(t,x)}{h_k} \Big)dxdt
 = 
 -\int_0^{h_k} \int_{\bR^d}\rho^0(x)
  \frac{\varphi(t,x)}{h_k} dx dt  +   Q_k + R_k +O(h_k),
\end{equation}
where 
\begin{align}
     Q_k=&\int_0^T \int_{\bR^{d}} \rho_k(t,y) \Big(  \Big\langle\nabla f(y)  , \Big(A+B_{h_k}\Big)\nabla \varphi (t,y) \Big\rangle - \Big\langle b(y) , \nabla \varphi(t,y) \Big\rangle - \frac{\varepsilon_k}{2h_k}\divv \Big(\Big(A+B_{h_k}\Big)\nabla \varphi (t,y)\Big) \Big) dydt,
     \label{eq 101}
    \\
    R_k=& -\int_0^T \int_{\bR^{d}}p(\rho_k(t,y)) \divv \Big(\Big(A+B_{h_k}\Big)\nabla \varphi (t,y)\Big)  dydt.
    \label{eq 102}
\end{align}
\end{lemma}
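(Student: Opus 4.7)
The strategy is the classical JKO post-processing: rewrite the discrete time-difference as a summation-by-parts, and replace each ``temporal increment'' using the discrete Euler-Lagrange identity \eqref{eq Euler-Lagrange nice form}. All quantities will repackage cleanly as time-integrals against the piecewise-constant interpolation $\rho_k$, up to an $O(h_k)$ error controlled by the a priori bounds of Section \ref{A-priori estimates}.

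\textbf{Step 1 (discrete time derivative and Abel summation).} Because $\rho_k(t)=\rho^{n+1}_k$ on $[nh_k,(n+1)h_k)$, introduce the time-averaged test function
\[ g^n(x):=\frac{1}{h_k}\int_{nh_k}^{(n+1)h_k}\varphi(t,x)\,dt,\qquad n=0,\dots,N_k,\]
so that $h_k \nabla g^n(x)=\int_{nh_k}^{(n+1)h_k}\nabla_x\varphi(t,x)\,dt$. After the time change of variable $s=t+h_k$, the LHS of \eqref{eq 92} becomes $\sum_{n=0}^{N_k-1}\int \rho^{n+1}_k\,[g^{n+1}-g^n]\,dx$. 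A summation by parts then yields
\[\text{LHS}= -\int \rho^0 g^0\,dx\;-\;\sum_{n=0}^{N_k-1}\int (\rho^{n+1}_k-\rho^n_k)\,g^n\,dx\;+\;\int \rho^{N_k}_k g^{N_k}\,dx.\]
The first term is precisely $-h_k^{-1}\int_0^{h_k}\int \rho^0\varphi\,dx\,dt$. The boundary term at $T$ vanishes for $k$ large enough because $\varphi$ has compact support in $\bR\times\bR^d$ and $T$ is fixed, so that $\varphi\equiv 0$ on $[T,T+h_k]$ for $h_k$ small.

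\textbf{Step 2 (Euler-Lagrange applied stepwise).} For each $n$ write the increment via the optimal plan $\gamma^{n+1}_k\in\Pi(\rho^n_k,\rho^{n+1}_k)$:
\[\int(\rho^{n+1}_k-\rho^n_k)\,g^n\,dx=\int_{\bR^{2d}}[g^n(y)-g^n(x)]\,d\gamma^{n+1}_k(x,y).\]
Taylor-expand $g^n$ around $y$ to get $g^n(y)-g^n(x)=\langle\nabla g^n(y),y-x\rangle+r^n(x,y)$ with $|r^n(x,y)|\leq C\|\nabla^2\varphi\|_\infty\|y-x\|^2$. Apply the Euler-Lagrange identity \eqref{eq Euler-Lagrange nice form} (with the choice $\varphi\rightsquigarrow g^n$) to the linear term $h_k^{-1}\int\langle y-x,\nabla g^n\rangle d\gamma^{n+1}_k$, and substitute the explicit form \eqref{eq:aux:gradcalF} of $\delta\cF(\rho^{n+1}_k,(A+B_{h_k})\nabla g^n)$, which splits into a potential piece $\int \rho^{n+1}_k\langle(A+B_{h_k})\nabla g^n,\nabla f\rangle$ and a pressure piece $-\int p(\rho^{n+1}_k)\divv((A+B_{h_k})\nabla g^n)$.

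\textbf{Step 3 (reassembly as time integrals).} Multiply by $h_k$ and sum over $n$. Each quantity of the form $h_k\sum_n\Phi(\rho^{n+1}_k,g^n)$ with $\Phi$ linear in the test function becomes, by definition of $g^n$ and the piecewise-constant interpolation $\rho_k$, exactly $\int_0^T\Phi(\rho_k(t),\varphi(t,\cdot))\,dt$. Collecting the drift, entropic-correction and potential contributions produces precisely $Q_k$ in \eqref{eq 101}, while the pressure contribution produces $R_k$ in \eqref{eq 102}. Combined with Step 1 this gives the claimed identity modulo remainders.

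\textbf{Step 4 (error accounting -- the main obstacle).} One must verify that the accumulated errors are $O(h_k)$ uniformly in $k$, which is where the a priori machinery of Section \ref{A-priori estimates} is essential. The Taylor remainder is controlled through \eqref{eq : cost realted to euclidean cost} by $\sum_n\int\|y-x\|^2 d\gamma^{n+1}_k\leq C\sum_n\bigl[(c_h,\gamma^{n+1}_k)+h_k^2(M(\rho^n_k)+M(\rho^{n+1}_k))\bigr]$, which is $O(h_k)$ thanks to Corollary \ref{corollary sum of the costs} together with Lemma \ref{kramer : lemma : a-priori bounds general} and the identity $N_k h_k=T$. The $O(h)(1+\|\nabla g^n\|_\infty)(M(\rho^n_k)+M(\rho^{n+1}_k)+1)$ remainder from \eqref{eq Euler-Lagrange nice form}, once multiplied by $h_k$ and summed, gives $N_k\cdot h_k^2\cdot C=O(h_k)$; the $O(1/h)(c_h,\gamma^{n+1}_k)$ remainder, once multiplied by $h_k$ and summed, gives $\sum_n(c_h,\gamma^{n+1}_k)=O(h_k)$. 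The scaling $\varepsilon_k|\log\varepsilon_k|\leq Ch_k^2$ of Assumption \ref{assumption on scaling} is what guarantees that the regularising $\varepsilon_k/h_k$ factor in $Q_k$ remains a genuine $O(1)$ coefficient (with the additional $h_k$ contributions harmlessly absorbed into the $O(h_k)$ error). The bookkeeping of these four disparate error sources---reconciling \emph{pointwise} per-step errors with the \emph{global} cost bound---is the only subtle point.
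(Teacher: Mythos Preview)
Your proof is correct and follows essentially the same route as the paper. The only cosmetic difference is the order of operations: the paper fixes $t$, applies the Taylor expansion and Euler--Lagrange identity \eqref{eq Euler-Lagrange nice form} to $\varphi(t,\cdot)$, and \emph{then} integrates over $(t_{n-1},t_n)$; you instead time-average first via $g^n$ and apply \eqref{eq Euler-Lagrange nice form} once per step --- these are equivalent by Fubini since every term in \eqref{eq Euler-Lagrange nice form} is linear in the test function's derivatives. Your explicit treatment of the boundary term $\int\rho^{N_k}_k g^{N_k}\,dx$ at $t=T$ (relying on compact support of $\varphi$ in time) is in fact a point the paper leaves implicit in its summation-by-parts identity \eqref{eq 1 convergence LHS}.
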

\begin{proof}
Again, for notational convenience, we write $h_k=h,\varepsilon_k=\varepsilon,N_k=N$ omitting the dependence on $k$ but leave the dependence explicit in $\gamma_k$ and $\rho_k$. Let $t\in[0,T]$, the Taylor expansion yields 
\begin{align}
\nonumber
\int_{\bR^d}\Big( \rho^{n}_k(x)-\rho^{n-1}_k(x) \Big) \varphi(t,x) dx
& 
=\int_{\bR^{2d}} \Big( \varphi(t,y)-\varphi(t,x) \Big) d\gamma^n_k(x,y) 
\\
&=
\int_{\bR^{2d}} \Big\langle y-x , \nabla \varphi(t,y) \Big\rangle d\gamma^n_k(x,y)+\kappa_n(t), \label{eq 72}
\end{align}
where the remainder $\kappa_n$ is bounded using \eqref{eq : cost realted to euclidean cost} and Lemma \ref{kramer : lemma : a-priori bounds general}, namely, 
\begin{align}
\nonumber
|\kappa_n(t)| \leq \frac{1}{2} \| \nabla^2 \varphi \|_{\infty} \int_{\bR^{2d}} \|x-y\|^2 d\gamma^n_k(x,y) 
&
\leq C \int_{\bR^{2d}} \Big( c_h(x,y)+h^2\Big(\|x\|^2+\|y\|^2\Big) \Big) d\gamma^n_k(x,y) 
\\ \nonumber
&
= C\Big((c_h,\gamma^n_k)+h^2\Big(M(\rho_k^{n-1})+M(\rho_k^n)\Big)\Big)
\\
&\leq C \Big( (c_h,\gamma^n_k)+ h^2 \Big) .
\label{bound for kappa} 
\end{align}
From \eqref{eq 72} and using \eqref{eq Euler-Lagrange nice form}, whose $O(\cdot)$ terms absorb \eqref{bound for kappa}, we have 

\begin{align}
\nonumber
     \int_{\bR^d}\Big( \frac{\rho^{n}_k(x)-\rho^{n-1}_k(x)}{h} \Big)\varphi(t,x) dx=& \int_{\bR^{2d}} \Big\langle  b(y),\nabla \varphi(t,y) \Big\rangle d\gamma^n_k(x,y)
    \\ 
     \nonumber
     & + \int_{\bR^{d}} \Big( p(\rho^{n}_k(y)) +\frac{\varepsilon}{2h}\rho^{n}_k(y) \Big) \divv \Big(\Big(A+B_h\Big)\nabla \varphi (t,y)\Big) dy
    \\
     \nonumber
    -& \int_{\bR^d}  \rho^n_k(y) \Big\langle \nabla f(y) , \Big(A+B_h\Big)\nabla \varphi (t,y)\Big\rangle dy
    \\
    \label{auxiliary eq with Os}
     +&O(h)(1+\|\nabla \varphi\|_\infty) \Big( M(\rho_k^{n-1}) +M(\rho_k^{n})+1 \Big)  + O\Big(\frac{1}{h}\Big) (c_h,\gamma^n_k).
\end{align}
Integrating over the interval $(t_{n-1},t_n)$, and summing over $n$ leads to  
\begin{align}\label{last equation to take limit}
\nonumber
 & 
 \sum_{n=1}^{N} \int_{t_{n-1}}^{t_n}\int_{\bR^d} 
 \Big( \frac{\rho^n_k(x)-\rho^{n-1}_k(x)}{h} \Big) \varphi(t,x) dxdt
 \\
 \nonumber
 & = \int_0^T \int_{\bR^{d}} \rho_k(t,y)   \Big\langle b(y) , \nabla \varphi(t,y) \Big\rangle dydt
     + \int_0^T\int_{\bR^{d}} \Big( p(\rho_k(t,y)) +\frac{\varepsilon}{2h}\rho_k(t,y) \Big) \divv \Big(\Big(A+B_h\Big)\nabla \varphi (t,y)\Big) dydt
    \\
    & \qquad - \int_{0}^T \int_{\bR^d}  \rho_k(t,y) \Big\langle \nabla f(y) , \Big(A+B_h\Big)\nabla \varphi (t,y) \Big\rangle dydt
     +O(h),
     \\ \nonumber 
     &= -Q_k-R_k+O(h),
\end{align}
where $Q_k$ and $R_k$ given are by \eqref{eq 101} and \eqref{eq 102}. To establish the first equality we used the bounded moments result in Lemma \ref{kramer : lemma : a-priori bounds general}, Corollary \ref{corollary sum of the costs} on the sum of the costs to control for the very last term in \eqref{auxiliary eq with Os} after being summed up over $n$, and have used that $Nh=T$. By summation by parts, the LHS is equal 
\begin{align}
\label{eq 1 convergence LHS} \nonumber
      & \sum_{n=1}^{N_k}\int_{t_{n-1}}^{t_n}\int_{\bR^d}\Big(\frac{\rho^n_k(x)-\rho^{n-1}_k(x)}{h}  \Big) \varphi(t,x) dxdt
      \\
      &\qquad \qquad=-\int_0^h \int_{\bR^d} \rho^0(x) \frac{\varphi(t,x)}{h}dx dt+ \int_0^T \int_{\bR^d} \rho_k(t,x)  \Big( \frac{\varphi(t,x)-\varphi(t+h,x)}{h} \Big) dxdt.
\end{align}
Joining \eqref{last equation to take limit} and \eqref{eq 1 convergence LHS}, and re-arranging gives the result \eqref{eq 92}.
\end{proof}

Inline with the classical strategy developed in \cite{jordan1998variational} we are left to take limits in \eqref{eq 92}. The convergence of the additional terms involving $b,\frac{\varepsilon}{h}$ is easy since they are linear in $\rho_k$ and we have the scaling \eqref{eq: assumption on scaling}. The convergence of the non-linear term is  dealt with in the following Section, after which we conclude the proof of Theorem \ref{theorem MAIN THEOREM}.

\subsubsection*{Strong Convergence of the pressure of $\rho_k$}

We emphasise the weak convergence of $\rho_k$ is not enough to deal with convergence of the non-linear term 
\begin{align*}
\int_0^T \int_{\bR^{d}} 
 p(\rho_k(t,y))  \divv \Big(\Big(A+B_h\Big)\nabla \varphi (t,y)\Big) 
     dy dt.    
\end{align*}
Instead, the convergence of $\rho_k \to \rho$ in $L^m([0,T],\bR^d)$ is obtained via the compactness argument \cite[Theorem 2]{rossi2003tightness} similar to that done in \cite{carlier2017convergence, carlier2017splitting}. Then, \eqref{additional assumptions 3} implies $p$ is continuous from $L^m([0,T],\bR^d)$ to $ L^1([0,T],\bR^d)$ and hence $p(\rho_k) \to p(\rho) $ in $L^1([0,T],\bR^d)$. 
\begin{lemma}\label{Lemma : for the strong convergence}
Consider the sequence of interpolations $\rho_k : [0,T] \times \bR^d \to \bR$ constructed from \eqref{eq: time interpolation}, and $m \in \bN$ introduced in Assumption \ref{Assumption on potential and internal energy}. For $k$ large enough 
we have that
\begin{equation}
    \int_0^T \int_{\mathbb{R}^d} \big( (\rho_k(t,y))^m +\|\nabla (\rho_k(t,y))^m\| \big) dydt \leq C,
\end{equation}
where $C>0$ independent of $k$.

\end{lemma}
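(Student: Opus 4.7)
The plan is to establish the two terms of the bound via different ingredients: the $L^m$ piece comes immediately from the a priori estimates, while the gradient piece requires the pressure condition \eqref{additional assumptions 3} together with the discrete Euler-Lagrange equation.

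For the first term $\int_0^T\int(\rho_k)^m$, I would apply \eqref{additional assumptions 2} slice by slice: for every $n,k$,
\begin{equation*}
\tfrac{1}{C}\int_{\bR^d}(\rho^n_k)^m\,dx \leq C M(\rho^n_k) + U(\rho^n_k).
\end{equation*}
Lemma \ref{kramer : lemma : a-priori bounds general} bounds $M(\rho^n_k)$ and $\cF(\rho^n_k)$ uniformly in $(n,k)$, and since $F(\rho^n_k)\geq 0$ we have $U(\rho^n_k)\leq\cF(\rho^n_k)\leq C$. Hence $\sup_{n,k}\int(\rho^n_k)^m\,dx\leq C$. Using the interpolation \eqref{eq: time interpolation} and $N_kh_k=T$, integration in time gives the required bound.

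For the gradient piece, the pressure inequality $p'(s)\geq s^{m-1}/C$ from \eqref{additional assumptions 3} together with the chain rule yields the pointwise comparison
\begin{equation*}
\|\nabla(\rho^n_k)^m\| = m(\rho^n_k)^{m-1}\|\nabla\rho^n_k\| \leq Cm\,p'(\rho^n_k)\|\nabla\rho^n_k\| = Cm\|\nabla p(\rho^n_k)\|,
\end{equation*}
which reduces the problem to showing $\int_0^T\int\|\nabla p(\rho_k)\|\leq C$. I would obtain this by distributional duality: testing $p(\rho^n_k)$ against $\divv\eta$ for $\eta\in C_c^\infty(\bR^d;\bR^d)$ of unit sup-norm, and plugging into the Euler-Lagrange identity \eqref{eq Euler-Lagrange}. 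Rearranging gives $\int p(\rho^n_k)\divv\eta$ as the sum of a cost contribution $\tfrac{1}{2h}\int\langle\eta,\nabla_y c_h\rangle d\gamma^n_k$, an entropic contribution $\tfrac{\varepsilon}{2h}\int\rho^n_k\divv\eta$, and a potential contribution $\int\rho^n_k\langle\eta,\nabla f\rangle$.

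Each contribution, after multiplying by $h_k$ and summing over $n$, is controlled uniformly as follows. The potential term is dominated by $\|\nabla f\|_\infty M(\rho^n_k)^{1/2}$ via Cauchy-Schwarz and Assumption \ref{Assumption on potential and internal energy}; the entropic term is $O(\varepsilon_k/h_k)$ per step, hence totals $O(N_k\varepsilon_k)=O(\varepsilon_k/h_k^2\cdot T)$ which stays bounded by \eqref{eq: assumption on scaling}; and the cost term is handled by invoking the structural identity \eqref{equation assumption for the cost} to trade $\nabla_y c_h$ (against directions in the range of $A+B_h$) for $2(y-x)-2hb(y)$ up to controlled errors, then applying Cauchy-Schwarz in $\gamma^n_k$ and the telescoping cost estimate from Lemma \ref{lemma : change of variable satisfy assumption} and Corollary \ref{corollary sum of the costs}. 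Summing in $n$ with $N_kh_k=T$ and taking supremum over admissible $\eta$ delivers the claim. The main obstacle is the cost term, since \eqref{eq 93} allows $\|\nabla_y c_h\|$ to blow up as $h\to 0$ while individual costs are only $O(h^2)$; balancing these two scales, by squeezing the test vector field through $A+B_h$ so that only $\|y-x\|/h$ contributions remain, is what makes the argument close uniformly in $k$.
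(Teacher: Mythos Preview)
Your treatment of the $L^m$ piece is correct and coincides with the paper's argument. The gradient piece, however, has a genuine gap in the handling of the entropic contribution.

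When you rearrange the Euler--Lagrange identity \eqref{eq Euler-Lagrange} to isolate $\int p(\rho^n_k)\,\divv\eta$, the term $\frac{\varepsilon_k}{2h_k}\int\rho^n_k\,\divv\eta$ lands on the right-hand side. You then claim this is $O(\varepsilon_k/h_k)$ ``per step''. But to extract the BV seminorm of $p(\rho^n_k)$ you must take the supremum over $\eta\in C_c^\infty$ with $\|\eta\|_\infty\leq 1$, and for such $\eta$ the quantity $\int\rho^n_k\,\divv\eta$ is \emph{not} uniformly bounded: $\|\divv\eta\|_\infty$ is completely uncontrolled by $\|\eta\|_\infty$, and you have no a priori BV bound on $\rho^n_k$ itself (for $m>1$ there is no reason $\rho^n_k$ should even lie in $BV$). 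So the asserted $O(\varepsilon_k/h_k)$ bound is unjustified and the argument does not close.

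The paper's fix is simple but essential: instead of isolating $p(\rho^n_k)$, one works with the combined quantity
\[
\mu^n:=\frac{\varepsilon_k}{2h_k}\rho^n_k+p(\rho^n_k),
\]
so that the entropic term is absorbed into the left-hand side of the Euler--Lagrange identity rather than appearing as a residual. One then bounds $\|\nabla\mu^n\|_{L^1}$ exactly via the cost/potential estimates you outline (this part of your argument is fine and matches the paper). The passage back to $(\rho^n_k)^m$ is harmless because $\nabla\mu^n=\big(\tfrac{\varepsilon_k}{2h_k}+p'(\rho^n_k)\big)\nabla\rho^n_k$ and $p'(s)\geq s^{m-1}/C$ by \eqref{additional assumptions 3}, so
\[
\|\nabla(\rho^n_k)^m\|=m(\rho^n_k)^{m-1}\|\nabla\rho^n_k\|\leq Cm\,\|\nabla\mu^n\|,
\]
the extra $\tfrac{\varepsilon_k}{2h_k}$ in the denominator only helping. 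This is the missing idea in your proposal.
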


\begin{proof}
The estimate of Lemma \ref{kramer : lemma : a-priori bounds general} and \eqref{additional assumptions 2} \textcolor{black}{yield directly} 
\begin{equation*}
    \int_0^T\int_{\bR^d} (\rho_k(t,y))^m dydt \leq C.
\end{equation*}
It remains to show 
\begin{equation}
    \int_0^T\int_{\bR^d} \| \nabla (\rho_k(t,y))^m \|dydt \leq C.
\end{equation}
Omit the dependence on $k$ from $\rho^n_k=\rho^n$ and $\gamma^n_k=\gamma^n$ for this proof.
Set $\mu^n:=\frac{\varepsilon}{2h}\rho^n+p(\rho^n)$ and notice that $\mu^n\in L^1(\bR^d)$ by \eqref{additional assumptions 2} and Lemma \ref{kramer : lemma : a-priori bounds general}. From the Euler-Lagrange equation Lemma \ref{lemma : general E.L equation}  
\begin{align}\label{eq 82}
    \int_{\bR^d} \mu^n(y) \divv(\eta(y)) dy = \frac{1}{2h} \int_{\bR^{2d}} \Big\langle \nabla_y c_h(x,y) , \eta(y) \Big\rangle d\gamma^{n}(x,y) + \int_{\bR^d} \Big\langle \rho^n(y)\nabla f(y) , \eta(y) \Big\rangle dy.
\end{align}
Since $\gamma^n \in \Pi^r(\rho^{n-1},\rho^n)$, by the disintegration of measures Theorem \cite[Theorem 2.28]{ambrosio2000functions} there exists a measure valued map $y\to \gamma^n_y$ such that $\gamma^n=\gamma_y^n \times \rho^n$, so that one can write 
\begin{equation*}
    \int_{\bR^{2d}} \Big\langle  \nabla_y c_h(x,y) , \eta(y)\Big\rangle  d\gamma^{n}(x,y)
    = 
    \int_{\bR^d}\Big\langle  \eta(y) , \Big( \rho^n(y) \int_{\bR^d} \nabla_y c_h(x,y) \gamma_y^n(x) dx \Big)\Big\rangle 
 dy.
\end{equation*}
Note that, for each fixed $h>0$, $y\mapsto  \big(\rho^n(y) \int_{\bR^d} \nabla_y c_h(x,y) \gamma_y^n(x) dx\big) \in L^1(\bR^d)$, since by \eqref{eq 93} and Lemma \ref{kramer : lemma : a-priori bounds general},
\begin{align}
 \int_{\bR^d} \Big|  \rho^n(y) \int_{\bR^d} \nabla_y c_h(x,y) \gamma_y^n(x) dx \Big|dy \leq&    \int_{\bR^{2d}} \| \nabla_y c_h(x,y) \| \gamma^n(x,y) dx dy \notag
  \\
  \leq& C(h) \Big(M(\rho^n)+M(\rho^{n-1})+1\Big)<\infty.\notag
\end{align}
Moreover, since $f$ is differentiable and Lipschitz it is clear that $y \mapsto \rho^n(y)\nabla f(y) \in L^1(\bR^d)$. Hence $\mu^n$ has a weak derivative $\nabla \mu^n \in L^1(\bR^d)$.  
Moreover, we prove next that $\mu^n \in \BV(\bR^d)$, concretely, 
\begin{align}
  \Big| \int_{\bR^d} \mu^n(y) \divv(\eta(y)) dy \Big| 
 \leq &
    \Big| \frac{1}{2h} \int_{\bR^{2d}}\Big\langle  \nabla_y c_h(x,y) ,\eta(y) \Big\rangle  d\gamma^{n}(x,y)dxdy \Big| + C\|\eta\|_\infty
    \label{z1}
   \\
  =&
  \Big| \frac{1}{h} \int_{\bR^{2d}}\Big\langle  \Big((y-x)-hb(y) \Big) , (A+B_h)\eta (y)\Big\rangle  d\gamma^{n}(x,y) \Big|
  \label{z2}
  \\
  &+ \Big|O(h)(1+\|\eta\|_\infty)(M(\rho^{n-1})+M(\rho^n)+1)+ O\Big(\frac{1}{h}\Big) (c_h,\gamma^n) \Big| + C\|\eta\|_\infty, 
 \nonumber
\end{align}
where \eqref{z1} follows using that $f$ is differentiable and Lipschitz, and \eqref{z2} follows by  \eqref{equation assumption for the cost}. Notice now that the moments in \eqref{z2} are finite because of Lemma \ref{kramer : lemma : a-priori bounds general} and the $O(h)$ terms are dominated by a constant $C$. Therefore, 
\begin{align}
\eqref{z2} \leq& \Big| \frac{1}{h} \int_{\bR^{2d}}\Big\langle  \Big((y-x)-hb(y) \Big) , (A+B_h)\eta (y)\Big\rangle  d\gamma^{n}(x,y) \Big|
 \label{z3}
  \\
  &+ 
  O\Big(\frac{1}{h}\Big) (c_h,\gamma^n) + C\Big( 1+\|\eta\|_\infty\Big).
  \nonumber
\end{align}
Consider the first term in  \eqref{z3}
\begin{align}
&\Big| \frac{1}{h} \int_{\bR^{2d}}\Big\langle  \Big((y-x)-hb(y) \Big) , (A+B_h)\eta (y)\Big\rangle  d\gamma^{n}(x,y) \Big|
\nonumber
\\
& \leq
O(1)\|\eta\|_\infty\Big( \frac{1}{h} \int_{\bR^{2d}} \|x-y\| d\gamma^n(x,y)  + \int_{\bR^d}  \| b(y) \| \rho^n(y)dy \Big) 
\label{z6}
 \\
 & 
 \leq  O(1)\|\eta\|_\infty \Big( \frac{1}{h}  \Big( \int_{\bR^{2d}} \|x-y\|^2 d\gamma^n(x,y)\Big)^{1/2}  +  1+\int_{\bR^d} \|y\|^2 \rho^n(y)dy \Big)
 \label{z4}
 \\
 & \leq  O(1)\|\eta\|_\infty  \frac{1}{h} \Big( (c_h,\gamma^n)+ O(h^2)\Big)^{1/2}  + C\|\eta\|_\infty,
 \label{z5}
\end{align}
where: \eqref{z6} is because of Cauchy Schwartz inequality and  that $\|(A+B_h)\eta\|_{\infty}\leq  O(1)\|\eta\|_{\infty}$ when $h<1$. \eqref{z4} follows by Jensen's inequality and Assumption \ref{assumption on b and A}. \eqref{z5} follows by  \eqref{eq : cost realted to euclidean cost} and  Lemma \ref{kramer : lemma : a-priori bounds general}, the constant $C$ depends only on the moment bound and the vector field $b$. We thus have, using the bound \eqref{z5} in conjunction with \eqref{z3}, 
\begin{align}
     \Big| \int_{\bR^d} \mu^n(y) \divv(\eta(y)) dy \Big| \leq&  \|\eta\|_{\infty} O\Big(\frac{1}{h}\Big) \Big( (c_h,\gamma^n)+ O(h^2)\Big)^{1/2}  
     \\
     &+O\Big(\frac{1}{h}\Big) (c_h,\gamma^n) + C\Big( 1+\|\eta\|_\infty\Big).
\end{align}
Since $\mu^n$ has weak derivative $\nabla \mu^n \in L^1(\bR^d)$ we have that
\begin{align}
   \| \nabla \mu^n \|_{L^1(\bR^d)} 
   =& 
   \sup_{\{ \eta \in C^\infty_c(\bR^d;\bR^d)~:~ \sup\|\eta\|\leq 1 \} }   \int_{\bR^d} \mu^n(y) \divv(\eta(y)) dy   
   \\
   \leq& 
   C\Big( \frac{1}{h} \Big( (c_h,\gamma^n)+ O(h^2)\Big)^{1/2} + \frac{1}{h} (c_h,\gamma^n)  +1\Big), 
\end{align}
for some $C>0$. Therefore, by Cauchy Schwartz inequality,  Corollary \ref{corollary sum of the costs}, and the scaling Assumption \ref{assumption on scaling}, we have

\begin{align}
    h\sum_{n=1}^N \| \nabla \mu^n \|_{L^1(\bR^d)} \leq& C\sum_{i=1}^N \Big( (c_h,\gamma^n)+ O(h^2)\Big)^{1/2}+\sum_{n=1}^N (c_h,\gamma^n) +
    TC
    \nonumber
    \\
  \leq& C\sqrt{N}\Big( \sum_{i=1}^N (c_h,\gamma^n)+O(h^2) \Big)^{1/2}+C
     \leq  C\sqrt{Nh} +C \leq C,
     \label{z7}
\end{align}
for a constant $C$ independent of $k$. To finish the proof we provide a sketch of the argument and refer the reader to \cite[Proposition 3.13]{carlier2017convergence} for the full details. One can show that $ \| (\rho^n)^{m-1} \nabla \rho^n \| \leq C\|\nabla \mu^n\|$, so that $(\rho^n)^m\in W^{1,1}(\bR^d)$, with
$$
\|\nabla(\rho^n)^m\|\leq C\|\nabla 
\mu^n \|.
$$
Therefore, using \eqref{z7}
\begin{equation}
    \int_0^T \int_{\bR^d} \|\nabla (\rho_k)^m \| dxdt \leq h\sum_{n=1}^N \int_{\bR^d} \|\nabla (\rho^n)^m \|  dx  \leq C h\sum_{n=1}^N \int_{\bR^d} \| \nabla (\mu^n)^m \| dx \leq C.
\end{equation}
\end{proof}

By Lemma \ref{Lemma : for the strong convergence} we can use the compactness results in \cite[Theorem 2]{rossi2003tightness}. That is, following identically \cite[Proposition 3.14, Lemma 3.15]{carlier2017convergence} we have the following strong convergence (we omit the proof). 
\begin{lemma}\label{lemma : strong convergence results} 
As $k\to \infty$, \textcolor{black}{up to a suitable subsequence if necessary}, we have $\rho_k \to \rho $ in $L^m([0,T],\bR^d)$ and $p(\rho_k) \to p(\rho) $  in $L^1([0,T],\bR^d)$. 
\end{lemma}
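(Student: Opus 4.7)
The plan is to apply an Aubin--Lions--Savar\'e-type compactness theorem, specifically \cite[Theorem 2]{rossi2003tightness}, which requires two ingredients: (i) a spatial compactness estimate uniform in time, and (ii) a weak equi-continuity in time of the family $\{\rho_k\}$. Ingredient (i) has just been supplied by Lemma \ref{Lemma : for the strong convergence}, which tells us that $((\rho_k)^m)_k$ lies in a bounded subset of $L^1\big((0,T);W^{1,1}(\bR^d)\big)$. Combined with the uniform second-moment bound of Lemma \ref{kramer : lemma : a-priori bounds general}, this yields tightness in $L^m(\bR^d)$ for almost every $t$, preventing escape of mass to infinity.

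For (ii), I would exploit the discrete Euler--Lagrange machinery. From the Taylor expansion \eqref{eq 72} together with the remainder bound \eqref{bound for kappa}, for any test function $\varphi \in C_c^\infty(\bR^d)$ one obtains, using Cauchy--Schwarz on the linear cost term and the uniform moment bound of Lemma \ref{kramer : lemma : a-priori bounds general},
\[
\Big| \int_{\bR^d} (\rho_k^n - \rho_k^{n-1})\,\varphi\, dx \Big|
\leq C\,\|\varphi\|_{C^2(\bR^d)}\Big( \sqrt{h\,(c_h,\gamma_k^n)} + (c_h,\gamma_k^n) + h^2 \Big).
\]
Summing over consecutive $n$ between two discrete times separated by $\tau$, applying Cauchy--Schwarz together with Corollary \ref{corollary sum of the costs} on the total cost, produces an equi-continuity estimate of the form
\[
\int_0^{T-\tau} \|\rho_k(t+\tau,\cdot) - \rho_k(t,\cdot)\|_{(C^2_c)^*}\, dt \leq C\big(\sqrt{\tau} + h_k\big),
\]
which is precisely the weak time regularity required by \cite[Theorem 2]{rossi2003tightness}.

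Combining (i) and (ii), the Rossi--Savar\'e compactness theorem delivers, up to a subsequence (not relabelled), a limit $\rho \in L^m((0,T)\times \bR^d)$ such that $\rho_k \to \rho$ strongly in $L^m((0,T)\times \bR^d)$. For the pressure term, extract a further subsequence along which $\rho_k \to \rho$ almost everywhere. Since the continuous map $s \mapsto p(s)$ satisfies the growth bound $p(s)\leq Cs^m$ of \eqref{additional assumptions 3}, the family $\{p(\rho_k)\}$ is dominated by $C(\rho_k)^m$, and the latter converges in $L^1((0,T)\times\bR^d)$ by the strong $L^m$-convergence of $\rho_k$. A generalised dominated convergence argument (variable dominating function) together with the pointwise a.e.\ convergence $p(\rho_k)\to p(\rho)$ then yields $p(\rho_k)\to p(\rho)$ in $L^1((0,T)\times \bR^d)$.

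The main obstacle is step (ii): because $c_h$ is not a metric, the classical Wasserstein-based time equi-continuity argument does not apply directly. One must work with the weaker control provided by \eqref{eq : cost realted to euclidean cost}, which bounds $\|x-y\|^2$ only up to a second-moment error of order $h^2$, and verify that the resulting error terms are absorbed by the scaling of Assumption \ref{assumption on scaling}. Corollary \ref{corollary sum of the costs} and the a priori moment bounds of Lemma \ref{kramer : lemma : a-priori bounds general} provide the necessary slack.
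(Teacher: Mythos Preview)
Your proposal is correct and follows essentially the same route as the paper, which omits the proof and defers to \cite[Proposition 3.14, Lemma 3.15]{carlier2017convergence} together with \cite[Theorem 2]{rossi2003tightness}; you have accurately reconstructed both the spatial-compactness input (Lemma \ref{Lemma : for the strong convergence} plus the moment bound) and the weak time-equicontinuity argument via the Taylor expansion and Corollary \ref{corollary sum of the costs}. One small slip: in your intermediate displayed bound the first term should read $\sqrt{(c_h,\gamma_k^n)+h_k^2}$ rather than $\sqrt{h\,(c_h,\gamma_k^n)}$, but after summing and applying Cauchy--Schwarz with Corollary \ref{corollary sum of the costs} you still arrive at the stated $C(\sqrt{\tau}+h_k)$, so the conclusion is unaffected.
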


\subsection{Proof of the main result}
\label{section proof of the main result}
We are finally in a position to prove the main result.
\begin{proof}[\textbf{Proof of Theorem \ref{theorem MAIN THEOREM}}]
Taking the limit, up to a subsequence if necessary, $k\to \infty$ ($h,\varepsilon \to 0$, $N\to\infty$) in \eqref{eq 92} and using the convergence of Lemma \ref{lemma : strong convergence results} we can argue the convergence of $Q_k$ and $R_k$ in \eqref{eq 92} as follows. 
For $Q_k$ of \eqref{eq 101} we have 
\begin{equation*}
   \lim_{k\to\infty} Q_k = \int_0^T \int_{\bR^{d}} \rho(t,y) \Big(  \Big\langle\nabla f(y)  , A\nabla \varphi (t,y) \Big\rangle - \Big\langle b(y) , \nabla \varphi(t,y) \Big\rangle \Big) dydt,
\end{equation*}
since $b$ is continuous (Assumption \ref{assumption on b and A}), and $\|\nabla f\|$ is uniformly bounded, and we have used the scaling \eqref{eq: assumption on scaling}, namely, ${\varepsilon_k}/{h_k}\to 0$. 

For $R_k$ of \eqref{eq 102} it is clear that
\begin{equation*}
    \lim_{k\to\infty}  R_k = -\int_0^T \int_{\bR^{d}}p(\rho(t,y)) \divv \Big(A\nabla \varphi (t,y)\Big)  dydt 
\end{equation*}
We see that the limit $\rho$ satisfies \eqref{eq: weak formulation general PDE}. 
\end{proof}


\appendix
\section{Appendix}
The following is a well  established result that bounds the entropy of a distribution by its second moment. 
\begin{lemma}\cite[Proposition 4.1]{jordan1998variational}\label{lemma : appendix : entropy bound} There exists a $C>0$ and $0<\alpha<1$ such that 
\begin{equation}\label{eq appendix 1}
    H(\mu)\geq-C(M(\mu)+1)^{\alpha},~~\forall \mu\in \cP_2^r(\bR^d).
\end{equation}
And if $U$ is defined as in Assumption \ref{Assumption on potential and internal energy} then 
\begin{equation}\label{eq appendix 2}
    U(\mu) \geq - C(M(\mu)+1)^\alpha,~~\forall \mu\in \cP_2^r(\bR^d).
\end{equation}
Note $C$ is chosen large enough so that \eqref{eq appendix 1} and \eqref{eq appendix 2} hold simultaneously. 
\end{lemma}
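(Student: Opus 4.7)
The plan is to prove the two bounds separately by comparing $\rho\log\rho$ (respectively $u(\rho)$) against a reference density whose integrability against $\rho$ can be controlled by the second moment. The argument is the classical one going back to \cite{jordan1998variational}, and no nontrivial input from the scheme is needed.

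\textbf{First inequality.} I would introduce the reference probability density $\gamma(x):=Z^{-1}e^{-\|x\|}$ with $Z:=\int_{\bR^d}e^{-\|x\|}dx<\infty$, and decompose
\begin{equation*}
H(\mu)=\int\rho\log\rho\,dx=\int\rho\log(\rho/\gamma)\,dx+\int\rho\log\gamma\,dx=H(\mu\|\gamma)-\log Z-\int\rho(x)\|x\|\,dx.
\end{equation*}
Jensen's inequality gives $H(\mu\|\gamma)\geq 0$, and Cauchy--Schwarz yields $\int\rho(x)\|x\|\,dx\leq\sqrt{M(\mu)}$. Combining these bounds produces $H(\mu)\geq-\log Z-\sqrt{M(\mu)}\geq -C(1+M(\mu))^{1/2}$, which is the claim with $\alpha=1/2$.

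\textbf{Second inequality.} From Assumption \ref{Assumption on potential and internal energy}, $u(s)\geq-Cs^\alpha$ for some $\alpha\in(d/(d+2),1)$, hence
\begin{equation*}
U(\mu)\geq -C\int_{\bR^d}\rho(x)^\alpha\,dx.
\end{equation*}
To estimate the right-hand side, I would write $\rho^\alpha=\bigl(\rho(1+\|x\|^2)\bigr)^\alpha\cdot(1+\|x\|^2)^{-\alpha}$ and apply H\"older's inequality with conjugate exponents $1/\alpha$ and $1/(1-\alpha)$:
\begin{equation*}
\int\rho^\alpha\,dx\leq\Big(\int\rho(x)(1+\|x\|^2)\,dx\Big)^{\alpha}\Big(\int(1+\|x\|^2)^{-\alpha/(1-\alpha)}\,dx\Big)^{1-\alpha}.
\end{equation*}
The second integral is finite precisely when $\alpha/(1-\alpha)>d/2$, i.e.\ $\alpha>d/(d+2)$ --- exactly the lower bound imposed in Assumption \ref{Assumption on potential and internal energy}. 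The first factor is controlled by $(1+M(\mu))^\alpha$, so $U(\mu)\geq-C(1+M(\mu))^\alpha$, as required. Taking the larger of the two constants ensures both bounds hold with a common $C$.

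\textbf{Main obstacle.} The only delicate point is the matching of the exponent $\alpha$ to the dimension. Using a Gaussian reference $e^{-\|x\|^2}$ in the first bound would merely give $H(\mu)\geq -C(1+M(\mu))$, which is linear in the moment and insufficient for the a priori estimates in Section \ref{A-priori estimates}; the exponential reference $e^{-\|x\|}$ is the lightest-tailed choice that still produces the sub-linear $M(\mu)^{1/2}$ control via Cauchy--Schwarz. For the internal energy bound the threshold $\alpha>d/(d+2)$ is sharp: it is precisely the dimension-dependent H\"older exponent that makes $(1+\|x\|^2)^{-\alpha/(1-\alpha)}$ integrable on $\bR^d$, and this is exactly why that condition appears as a standing hypothesis on $u$.
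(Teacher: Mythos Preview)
The paper does not supply its own proof of this lemma; it is recorded in the appendix purely as a citation of \cite[Proposition 4.1]{jordan1998variational}. Your argument is correct and is exactly the classical one from that reference: the relative-entropy decomposition against an exponential reference density for \eqref{eq appendix 1}, and the H\"older splitting of $\rho^\alpha$ against the weight $(1+\|x\|^2)^{-\alpha}$ for \eqref{eq appendix 2}, with the integrability threshold $\alpha>d/(d+2)$ falling out precisely as you identify.

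One minor bookkeeping point worth making explicit: your two bounds are obtained with a priori different exponents ($1/2$ for the entropy, the $\alpha$ of Assumption~\ref{Assumption on potential and internal energy} for $U$). Since $(1+M)^{\beta}\leq(1+M)^{\alpha}$ whenever $0<\beta\leq\alpha$ and $M\geq 0$, replacing both by the larger of the two exponents yields a common $\alpha\in(0,1)$ as in the statement; this is harmless for the downstream uses in Sections~\ref{section well posedness} and~\ref{A-priori estimates}, where only the strict sublinearity $\alpha<1$ matters.
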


The next result provides lower semi-continuity  for the internal energy and the entropy  functional under bounded moments. 
\begin{lemma}\label{lemma : appendix : l.s.c of internal energy}\cite[Proposition 4.1]{jordan1998variational}
Let $u$ satisfy the Assumption \ref{Assumption on potential and internal energy}, and $U$ be defined as 
\begin{equation}
    U(\mu)=\begin{cases}
        \int_{\bR^d} u(\mu(x))dx~&\text{if}~\mu\in\cP^r(\bR^d)
        \\
       \infty~&\text{otherwise}
    \end{cases}.
\end{equation}
Then $U$ is weakly lower semi-continuous under bounded moments, i.e if $\{\mu_k\}_{k\in \bN}\subset \cP_2(\bR^d)$, $\mu \in \cP_2(\bR^d)$ with $\mu_k\rightharpoonup\mu$, and there exists $C>0$ such that $M(\mu_k),M(\mu)< C$ for all $k\in \bN$, then
\begin{equation}
    U(\mu)\leq \liminf_{k\to\infty}U(\mu_k).
\end{equation}  
\end{lemma}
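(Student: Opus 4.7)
My approach follows the classical three-step strategy for weak lower semi-continuity of convex integral functionals: (i) bound the negative part of $u(\mu_k)$ uniformly via the lower growth condition, (ii) deduce uniform integrability of $\{\mu_k\}$ from the superlinearity of $u$, (iii) obtain the semi-continuity itself from convexity via Fenchel duality.

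First, I would reduce to the non-trivial case $\liminf_{k\to\infty} U(\mu_k) < \infty$ and extract a (non-relabelled) subsequence along which the liminf is attained with $U(\mu_k)\leq L+1$ for all $k$ and some $L\in\bR$. Second, I would control the negative part using the bound $u(s)\geq -Cs^\alpha$ from Assumption \ref{Assumption on potential and internal energy}. Splitting $\mu_k^\alpha = \big(\mu_k(1+\|x\|^2)\big)^\alpha (1+\|x\|^2)^{-\alpha}$ and applying H\"older's inequality with conjugate exponents $1/\alpha$ and $1/(1-\alpha)$ yields
\begin{equation*}
\int_{\bR^d} \mu_k^\alpha\,dx \leq \Big(\int_{\bR^d}(1+\|x\|^2)\mu_k\,dx\Big)^\alpha \Big(\int_{\bR^d}(1+\|x\|^2)^{-\alpha/(1-\alpha)}\,dx\Big)^{1-\alpha}.
\end{equation*}
The crucial hypothesis $\alpha > d/(d+2)$ is precisely equivalent to $\alpha/(1-\alpha)>d/2$, which makes the second factor finite, and the uniform moment bound makes the first factor uniformly bounded in $k$. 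Hence $\int u^-(\mu_k)\,dx$ is uniformly bounded, and so is $\int u^+(\mu_k)\,dx = U(\mu_k) + \int u^-(\mu_k)\,dx$.

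Third, the superlinear growth of $u$ together with the de la Vall\'ee Poussin criterion applied to the bound on $\int u^+(\mu_k)\,dx$ yields uniform integrability of $\{\mu_k\}$. Combined with the weak convergence $\mu_k \rightharpoonup \mu$, this forces the limit $\mu$ to be absolutely continuous with respect to the Lebesgue measure, so $\mu \in \cP_2^r(\bR^d)$ and the statement $U(\mu)<\infty$ is at least meaningful.

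Finally --- and this will be the main obstacle --- I would obtain the lower semi-continuity itself via the Fenchel--Moreau representation $u(s)=\sup_{\xi\in\bR}\{s\xi-u^*(\xi)\}$, where $u^*$ is the Legendre conjugate of $u$ (extended to be $+\infty$ on $(-\infty,0)$). For any $\psi\in C_c(\bR^d)$,
\begin{equation*}
\int_{\bR^d}\big[\mu_k(x)\psi(x)-u^*(\psi(x))\big]\,dx \leq U(\mu_k),
\end{equation*}
and since $\psi\in C_b(\bR^d)$ and $\int u^*(\psi)\,dx$ is independent of $k$, the weak convergence gives
\begin{equation*}
\int_{\bR^d}\big[\mu(x)\psi(x)-u^*(\psi(x))\big]\,dx \leq \liminf_{k\to\infty}U(\mu_k).
\end{equation*}
The delicate step is then to show that the supremum of the left-hand side over $\psi$ in a suitable admissible class (e.g.\ truncations of $u'(\mu)$) recovers $U(\mu)$; this requires a truncation/approximation argument exploiting the superlinear growth of $u$ (so that minimizing sequences in the conjugate variable remain localised). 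Equivalently, one may directly invoke a general weak lower semi-continuity theorem for convex integral functionals of Ioffe--De Giorgi type, which handles precisely this class of integrands under the moment/integrability bounds established in the first three steps.
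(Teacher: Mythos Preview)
The paper does not give its own proof of this lemma; it is stated in the appendix with a bare citation to \cite[Proposition~4.1]{jordan1998variational}. Your proposal reproduces precisely the classical JKO argument behind that citation: the H\"older trick exploiting $\alpha>d/(d+2)$ to control $\int \mu_k^\alpha$ uniformly via the second-moment bound, followed by superlinearity/de la Vall\'ee Poussin for uniform integrability, and finally convexity for the lower semi-continuity. So your approach is correct and is the same as the (cited) one.

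One small caution on your Fenchel step: with $\psi\in C_c(\bR^d)$ the integrand $u^*(\psi(x))$ equals $u^*(0)=-\inf_{s\geq 0}u(s)$ outside the support of $\psi$, and under Assumption~\ref{Assumption on potential and internal energy} this may be strictly positive (nothing forces $u\geq 0$), so $\int_{\bR^d}u^*(\psi)\,dx$ can be $+\infty$ and the displayed inequality becomes vacuous. The standard fix is to work first on a fixed ball $B_R$ (where the duality argument goes through cleanly with bounded $\psi$), obtain $\int_{B_R}u(\mu)\leq\liminf_k\int_{B_R}u(\mu_k)$, and then let $R\to\infty$ using the uniform control on the negative tails you already established in step two. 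Alternatively, as you note, the Ioffe--De Giorgi theorem handles this directly.
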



\section{Verification for the examples}

\subsection{Non-linear diffusion equations}\label{section appendix non-linear diffusion}
\begin{proof}[Proof of proposition \ref{prop : the non-linear diffusion i.e continuity equation}]

By Theorem \ref{theorem MAIN THEOREM} one only needs to check that Assumptions \ref{Assumption on potential and internal energy}, \ref{assumption on b and A},  \ref{assumption for the cost} and \ref{assumption on scaling} hold. The Assumptions  \ref{Assumption on potential and internal energy}, \ref{assumption on b and A} and \ref{assumption on scaling} follow directly from the statement of the proposition and hence their verification is omitted. 

We now check Assumption \ref{assumption for the cost} on the cost function. Clearly  \eqref{eq 93} 
and
\eqref{eq : cost upper bound} 
and  \eqref{eq : cost lower bound} 
hold. Let us now verify $\eqref{eq : cost realted to euclidean cost}$. Let $\lambda_1,\lambda_2,\ldots$ with $0<\lambda_1=h \leq \lambda_2 \leq \ldots $ be the eigenvalues of $A+hI$. Note for all $i=2,\ldots,d$, $\lambda_i=C_i+h$ for some $C_i\geq 0$. Hence $A+hI$ is invertible, with an inverse $(A+hI)^{-1}$ that is symmetric with eigenvalues \textcolor{black}{$\frac{1}{\lambda_1},\frac{1}{\lambda_2},\ldots$}. Since it is symmetric it is diagonalizable and therefore its normalised eigenvectors form an orthonormal basis. Let $v_1,\ldots,v_d$ be the  normalised eigenvectors of $(A+hI)^{-1}$. 
For any $x\in \bR^d$ we can write $x=\sum_{i=1}^d x_i v_i$, where $x_i:=\langle x , v_i \rangle$. 
Now since $\|x\|^2=\sum_{i=1}^d x_i^2$, we have
\begin{align*}
    \big\langle (A+hI)^{-1}x , x \big\rangle =
    & \sum_{i=1}^d \frac{1}{\textcolor{black}{\lambda_i}} x_i^2 
    \geq \frac{1}{\textcolor{black}{\lambda_d}}\|x\|^2\geq \frac{1}{C+2}\|x\|^2,
\end{align*}  
for $h<1$ and some $C>0$, verifying \eqref{eq : cost realted to euclidean cost}. Lastly \eqref{equation assumption for the cost} holds by the symmetry of $A+hI$, \textcolor{black}{where we have taken $B_h=hI$} in \eqref{equation assumption for the cost}. 
To complete the proof it remains only to check the change of variable Assumption \ref{Assumption change of var for f and c - for regular.}. For this take $\cT_h(x)=x$, so that \eqref{kramer : eq : equation 61, with external} holds trivially since $c_h(x,x+\sigma z) \leq \sigma \| (A+hI)^{-1} \| \|z\|^2=\sigma O({h^{-\beta}})\|z\|^2$ for some $\beta>0$. Lastly, \eqref{eq 81} holds with this $\cT_h$ as $f$ is Lipschitz.        
\end{proof}

\subsection{The non-linear kinetic Fokker-Planck (Kramers) equation}\label{section appendix KFPE}

\begin{proof}[Proof of Proposition \ref{proposition KFPE number one}]
By Theorem \ref{theorem MAIN THEOREM} one only needs to check that Assumptions  \ref{Assumption on potential and internal energy}, \ref{assumption on b and A}, \ref{assumption for the cost}, \ref{Assumption change of var for f and c - for regular.}, and \ref{assumption on scaling} hold. The Assumptions  \ref{Assumption on potential and internal energy}, \ref{assumption on b and A}, and \ref{assumption on scaling} follow directly from the statement of the proposition and hence their verification is omitted. 
We now check Assumption \ref{assumption for the cost} on the cost function. Clearly \eqref{eq : cost lower bound} holds. The inequality \eqref{eq : cost upper bound} follows by substituting the estimates \cite[Eq.~(46),(47)]{duong2014conservative}\footnote{
The correct statement of \cite[Eq.~(47)]{duong2014conservative} is 
 $ \|\ddot{\bar{\xi}}\|_2^2\leq C\big(h^{-3}\|q-q'\|^2+h^{-1}\|p-p'\|^2+\|p\|^2+\|p'\|^2\big)$.
} 
into $c_h$, giving $c_h(x,v;x',v')\leq O(h^{-3})(\|x\|^2+\|v\|^2+\|x'\|^2+\|v'\|^2) $. The inequality \eqref{eq 93} is verified by the estimates \cite[Eqs.~(40a),(40b),(41)]{duong2014conservative} in conjunction with \eqref{eq : cost upper bound} just obtained. For \eqref{eq : cost realted to euclidean cost} see \cite[Eqs.~(39b),(39c)]{duong2014conservative}. For \eqref{equation assumption for the cost} we take inspiration from \cite{duong2014conservative}, defining, for any $h>0$,
$$
B_h:=\begin{pmatrix}
    -\frac{h^2}{6} & \frac{h}{2} 
    \\
    -\frac{h}{2} & 0
    \end{pmatrix},
$$
where, in the matrix $B_h$, each entry is a $\tilde{d}\times \tilde{d}$-dimensional matrix of that entry multiplied by the identity matrix.
Then for 
$$
\tilde{\eta}=(A+B_h)\eta,
$$
set $\eta^{1}$ (resp $\eta^{2}$) as the first $\tilde{d} $ components of $\eta$ (resp last $\tilde{d} $ components), and similarly for $\tilde{\eta}$. Then the estimate  \cite[page  2531]{duong2014conservative} 
\begin{align*}
    \Big\langle \nabla_{x'}c_h(x,v;x',v'),\tilde{\eta}^{1} & \Big\rangle +\Big\langle \nabla_{v'}c_h(x,v;x',v'),\tilde{\eta}^{2} \Big\rangle  
    \\
    =&2\Big( \Big\langle x'-x,\eta^{1 } \Big\rangle + \Big\langle v'-v, \eta^{2 } \Big\rangle  - h\Big\langle v', \eta^{1 } \Big\rangle  \Big)
    \\
    &+2\Big\langle h\nabla g(x')+\frac{1}{2} \tau_h(x,v;x',v') , -\frac{h}{2} \eta^{1} + \eta^{2 }  \Big\rangle
     \\
    &+2\Big\langle -h\nabla^2 g(x') v'+\frac{1}{2} \sigma_h(x,v;x',v') ,-\frac{h^2}{6} \eta^{1 } + \frac{h}{2}\eta^{2 }  \Big\rangle,
\end{align*}
where \cite[Eq.~(41)]{duong2014conservative} gives bounds on $\tau_h,\sigma_h$, ensures that \eqref{equation assumption for the cost}  holds.

We now verify Assumption \ref{Assumption change of var for f and c - for regular.} with the change of variables $\cT_h(x,v)=(x+hv,v)$, consider the admissible, in the sense of \eqref{eq KFPE COST 1}, cubic
\begin{equation*}
    \bar{\xi}(t)=x+vt+\Big( \frac{3}{h^2}(x'-x-vh)-\frac{v'-v}{h} \Big)t^2+\Big( \frac{v'+v}{h^2}-\frac{2}{h^3}(x'-x)  \Big)t^3,
\end{equation*}
starting at $(x,v)$ and ending at $(x',v')$. Using Assumption \ref{assumption on hamiltonian force in KFPE} we have
\begin{align*}
    c_h(x,v;x',v') \leq& 2C h \Big( \int_0^h  \| \ddot{\bar{\xi}}(t) \|^2 dt + \int_0^h\|\bar{\xi}(t)\|^2 dt \Big).
   \end{align*}
Note that
\begin{align*}
   h\int_0^h  \| \ddot{\bar{\xi}}(t) \|^2 dt
&\leq h^2\sup_{t\in [0,h} \| \ddot{\bar{\xi}}(t) \|^2
\\
&\leq C\Big(h^2\Big\|\frac{3}{h^2}\Big(x'-x-vh\Big)-\frac{v'-v}{h}\Big\|^2 + h^4\Big\|\frac{v'+v}{h^2}-\frac{2}{h^3}\Big(x'-x\Big)\Big\|^2\Big),
\end{align*}
and
\begin{align*}
    h \int_0^h\|\bar{\xi}(t)\|^2 dt
\leq& h^2 \sup_{t\in [0,h]} \|\bar{\xi}(t)\|^2
    \\
    \leq& Ch^2\Big( \|x\|^2+h^2\|v\|^2+h^4\Big\|\frac{3}{h^2}\Big((x'-x-vh\Big)-\frac{v'-v}{h}\Big\|^2+h^6\Big\|\frac{v'+v}{h^2}-\frac{2}{h^3}(x'-x)\Big\|^2\Big).
\end{align*}
Hence we obtain 
\begin{align*}
    c_h(x,v;x',v') \leq& C\Bigg( h^2\Big\|\frac{3}{h^2}\Big(x'-x-vh\Big)-\frac{v'-v}{h}\Big\|^2 + h^4\Big\|\frac{v'+v}{h^2}-\frac{2}{h^3}\Big(x'-x\Big)\Big\|^2
    \\
    & +h^2\Big( \|x\|^2+h^2\|v\|^2+h^4\Big\|\frac{3}{h^2}\Big(x'-x-vh\Big)-\frac{v'-v}{h}\Big\|^2+h^6\Big\|\frac{v'+v}{h^2}-\frac{2}{h^3}\Big(x'-x\Big)\Big\|^2\Big) \Bigg).
   \end{align*}
So considering $ c_h(x,v;\cT_h(x,v)-(\sigma z, \sigma w))$, we have 
\begin{align*}
    c_h(x,v;\cT(x,v)-(\sigma z, \sigma w))\leq& C\Bigg( h^2\|\frac{3}{h^2}(-\sigma z)-\frac{\sigma w}{h} \|^2+h^4\| \frac{\sigma w}{h^2} - \frac{2}{h^3} \sigma z \|^2 
    \\
    &+ h^2\Big( \|x\|^2+h^2\|v\|^2+h^4\|\frac{3}{h^2}(-\sigma z)-\frac{\sigma w}{h} \|^2+h^6\| \frac{\sigma w}{h^2} - \frac{2}{h^3} \sigma z \|^2 \Big)   \Bigg),
\end{align*}
which proves \eqref{kramer : eq : equation 61, with external}. Lastly the Lipschitz property of $f$ gives \eqref{eq 81}, which completes the verification of Assumption \ref{Assumption change of var for f and c - for regular.}.
\end{proof}

\begin{proof}[Proof of Proposition \ref{proposition KFPE number 2}]
By Theorem \ref{theorem MAIN THEOREM} one only needs to check that Assumptions  \ref{Assumption on potential and internal energy}, \ref{assumption on b and A}, \ref{assumption for the cost}, \ref{Assumption change of var for f and c - for regular.}, and \ref{assumption on scaling} hold. The Assumptions  \ref{Assumption on potential and internal energy}, \ref{assumption on b and A}, and \ref{assumption on scaling} follow directly from the statement of the proposition and hence their verification is omitted.

We now check Assumption \ref{assumption for the cost} on the cost function. The conditions \eqref{eq 93}, 
\eqref{eq : cost upper bound}, 
\eqref{eq : cost lower bound}, 
on $c_h$ are easy to verify. For \eqref{eq : cost realted to euclidean cost} see \cite[Eqs.~(39b),(39c)]{duong2014conservative}. Lastly for \eqref{equation assumption for the cost} we again take inspiration from \cite{duong2014conservative} and define for all $h>0$
$$
B_h:=\begin{pmatrix}
    -\frac{h^2}{6} & \frac{h}{2} 
    \\
    -\frac{h}{2} & 0
    \end{pmatrix},
$$
where again, in the matrix $B_h$, each entry is a $\tilde{d}\times \tilde{d}$-dimensional matrix of that entry multiplied by the identity matrix. One can see from \cite[Eq.~(60)]{duong2014conservative} does ensure that \eqref{equation assumption for the cost} holds. 

For Assumption \ref{Assumption change of var for f and c - for regular.} take $\cT_h(x,v)=(x+hv,v)$, we have
$$
c_h(x,v;\cT_h(x,v)-(\sigma z, \sigma w))=\|h \nabla g(x)-\sigma z\|^2+12\|\frac{1}{2}\sigma w-\frac{1}{h}\sigma z\|^2\leq C\Big( h^2 \|x\|^2+ \|\frac{\sigma}{h} z\|^2+ \|\sigma w\|^2 \Big),
$$
which proves \eqref{kramer : eq : equation 61, with external}. Lastly the Lipschitz property of $f$ gives \eqref{eq 81}, which completes the verification of Assumption \ref{Assumption change of var for f and c - for regular.}.
\end{proof}

\subsection{A degenerate diffusion equation of Kolmogorov-type}\label{section appendix degenerate}

The vector $ \mathbf{b} $ and matrix $\cM$ which define the cost function \eqref{eq mean square derivative cost function} are of the form \begin{equation}
\label{eq: b and M}
\mathbf{b}(h,\rx,\ry)=\begin{pmatrix}y_{1}-x_{1}-\frac{h}{1}x_{2}-...-\frac{h^{n-1}}{(n-1)!}x_{n}\\
\vdots\\
h^{i-1}\Big(y_{i}-\sum_{j=i}^{n}\frac{h^{j-i}}{(j-i)!}x_{j}\Big)\\
\vdots\\
h^{n-1}(y_{n}-x_{n})
\end{pmatrix},
\qquad
\cM=\cM_1 \cM_2^{-1},
\end{equation}
with $\cM_1,\cM_2\in \bR^{\tilde{d}n\times \tilde{d}n}$ given by
\begin{align}
\nonumber
& (\cM_1)_{ki}=\begin{cases}
(-1)^{n-k}\frac{(n+i-1)!}{(k+i-n-1)!}, & \quad \text{if} \quad k+i\ge n+1\\
0 & \quad \text{if} \quad  k+i<n+1,
\end{cases}
\\
\nonumber
&\cM_2=\left[\begin{array}{ccc}
1 & ... & 1\\
\begin{pmatrix}n\\
1
\end{pmatrix} & ... & \begin{pmatrix}2n-1\\
1
\end{pmatrix}\\
\vdots & \vdots & \vdots\\
k!\begin{pmatrix}n\\
k
\end{pmatrix}& ... & k!\begin{pmatrix}2n-1\\
k
\end{pmatrix}\\
\vdots & \vdots & \vdots\\
(n-1)!\begin{pmatrix}n\\
n-1
\end{pmatrix} & ... & (n-1)!\begin{pmatrix}2n-1\\
n-1
\end{pmatrix}
\end{array}\right],
\end{align}
where entry of these matrices is to be understood as a $\tilde{d}-$dimensional matrix that is equal to the entry multiplied but the $\tilde{d}-$dimensional identity matrix. The following matrices will also play an important role in the rest of the section 
\begin{align*}
    &
    J_{1}(h):=\mathrm{diag}(1,h,\cdots,h^{n-1}),
&&D:=\mathrm{diag}(0,\ldots,0,1),
    \\
    &
    J_{2}(h):=\begin{pmatrix}1 & h & \frac{h^{2}}{2!} & \frac{h^{3}}{3!} & \cdots & \frac{h^{n-1}}{(n-1)!}\\
 & h & h^{2} & \frac{h^{3}}{2!} & \cdots & \frac{h^{n-1}}{(n-2)!}\\
 &  & h^{2} & \frac{h^{3}}{1!} & \cdots & \frac{h^{n-1}}{(n-3)!}\\
 &  &  & \ddots & \cdots & \vdots\\
 &  &  &  &  & h^{n-1}
 \end{pmatrix},
&&Q:=\begin{pmatrix}0\\
1 & 0\\
 & 1 & 0\\
 &  & \ddots & \ddots\\
 &  &  & 1 & 0
\end{pmatrix}.
\end{align*}
Omitting the $h$ dependence in $J_1,J_2$ for the sake of clarity, we also define
\begin{align}
\nonumber
&T_1:=(2n-1)J_{1}^{T}\cM J_{1}-2h(J_{1}')^{T}\cM J_{1}-h^{2-2n}J_{1}^{T}\cM J_{2}DJ_{2}^{T}\cM J_{1},
\\
\nonumber
&T_{2}:=(1-2n)J_{2}^{T}\cM J_{1}+h\big((J_{2}')^{T}\cM J_{1}+J_{2}^{T}\,\cM \,J_{1}'\big)-hQJ_{2}^{T}\cM J_{1}
+J_{2}^{T}\cM J_{0}\cM J_{1}, 
\\
\nonumber
&T_3:=(2n-1)J_{2}^{T}\cM J_{2}-2h(J_{2}')^{T}\cM J_{2}+2hQJ_{2}^{T}\cM J_{2}
-h^{2-2n}J_{2}^{T}\cM J_{2}DJ_{2}^{T}\cM J_{2}.
\end{align}
Note that, again, $J_1, J_2, Q,D\in \mathbb{R}^{{\tilde{d} n\times \tilde{d} n}}$. Each entry of these matrices should be understood as a matrix of order $\tilde{d}$ that equals the entry multiplied with the $\tilde{d}$-dimensional identity matrix. 

We now state a series of results from \cite{DuongTran18} which will assist us in proving Proposition \ref{prop : the non-linear diffusion i.e continuity equation}.

\begin{lemma}[Proposition 2 of \cite{DuongTran18}]
\label{prop: aux}
The following assertions hold: (1) $T_1$ is anti-symmetric, (2) $T_2=0$, (3) $T_3$ is anti-symmetric, and (4) $\mathrm{Tr}(DJ_{2}^{T}\cM  J_{2})=n^2\tilde{d}h^{2(n-1)}$.
\end{lemma}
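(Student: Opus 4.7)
The plan is to exploit the highly rigid algebraic structure of $J_1(h), J_2(h), Q, D, \cM$ and carry out direct linear-algebraic computations organised around a small set of auxiliary identities. First, I would derive the differentiation rules for the time-dependent matrices: the most important is $hJ_1'(h) = J_1\cdot\mathrm{diag}(0,1,\ldots,n-1)$, which is immediate from the diagonal form of $J_1$, and the analogous identity $hJ_2'(h) = J_2\cdot\mathrm{diag}(0,1,\ldots,n-1)$, obtained directly from the entries $(J_2)_{ij} = h^{j-1}/(j-i)!$. Second, I would work out the algebraic relation governing the interaction between $Q$ and $J_2$ --- this is the ingredient through which the chain structure $\dot\xi_i = \xi_{i+1}$ of the underlying Kolmogorov-type dynamics manifests itself at the matrix level. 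Third, from the closed forms of $\cM_1$ and $\cM_2$ one extracts a handful of symmetry relations for $\cM = \cM_1\cM_2^{-1}$ which, at the end, drive the cancellations in all four parts.

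I would handle Part~(4) first as a warm-up. Since $D$ is a rank-one projector onto the last block coordinate, $\mathrm{Tr}(DJ_2^T\cM J_2)$ reduces to the bottom-right scalar entry of $J_2^T\cM J_2$, times the trace of the $\tilde d\times\tilde d$ identity block. Using $(J_2)_{kn} = h^{n-1}/(n-k)!$, this entry becomes $h^{2(n-1)}\sum_{k,l}\cM_{kl}/[(n-k)!\,(n-l)!]$, which can be summed explicitly by the defining formulas for $\cM_1$ and $\cM_2^{-1}$ to yield $n^2 h^{2(n-1)}$, and hence $n^2\tilde d\, h^{2(n-1)}$ after including the block factor.

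For Part~(2), the strategy is to view $T_2$ as the residue of a total derivative of $\mathbf{b}^T\cM\mathbf{b}$ taken in two different ways: the explicit $h$-derivative (producing the $(J_1')^T, (J_2')^T$ terms and the prefactor $(1-2n)$ via the $h^{2-2n}$ out front) versus a differentiation that uses the chain structure of $\mathbf{b}$ (producing the shift $Q$ and the rank-one correction $J_2^T\cM J_0\cM J_1$). Combining the preparatory identities, the four contributions in $T_2$ cancel pairwise. For Parts~(1) and~(3), I would compute $T_i + T_i^T$ directly. The rank-one correction $h^{2-2n}(\cdot)\,D\,J_2^T\cM(\cdot)$ is symmetric by construction since it factors through the symmetric rank-one projector $D$, so the task reduces to showing that the symmetric part of the remaining contributions matches this rank-one term exactly --- and this matches via the same differentiation identities and $\cM$-symmetries used for Part~(2).

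The main obstacle I anticipate is the third preparatory step: extracting the correct symmetry relations for $\cM$ from the closed forms of $\cM_1$ and $\cM_2^{-1}$. The alternating signs in $(\cM_1)_{ki}$, combined with the binomial structure of $\cM_2$, conceal a number of factorial identities --- e.g.~that certain partial sums of the form $\sum_k (-1)^{n-k}\binom{?}{?}/(n-k)!$ vanish or collapse to a specific value --- which must be verified either by direct manipulation of binomials or by induction on $n$. Once these are in place, the four assertions reduce to a systematic, if lengthy, bookkeeping exercise, which is exactly the computation carried out in \cite{DuongTran18}.
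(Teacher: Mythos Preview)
The paper does not prove this lemma at all: it is stated as a direct quotation of \cite[Proposition~2]{DuongTran18} and used as a black box. So there is no in-paper proof to compare your proposal against. Your sketch is a reasonable outline of the kind of argument one finds in \cite{DuongTran18}, and you correctly identify that the substance lies in combinatorial identities for $\cM=\cM_1\cM_2^{-1}$; but as a self-contained proof it remains programmatic --- the actual cancellations in Parts~(1)--(3) are asserted rather than carried out, and your own final sentence defers the computation back to \cite{DuongTran18}. For the purposes of this paper that is exactly what is required: cite the reference and move on.
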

\begin{lemma}[Lemma 4.3 of \cite{DuongTran18}]
\label{lem: InverH2H1}
$J_2^{-1}J_1=J$ where
\begin{equation}
\label{eq: H}
J_{ij}=\begin{cases}
0, \quad \text{if}~ j<i\\
(-1)^{j-i}\frac{h^{j-i}}{(j-i)!},\quad \text{if}~ j\geq i.
\end{cases}
\end{equation}
In particular $J_{ii}=1, \quad J_{ii+1}=-h$ and $J_{ij}=o(h^2)$ for $j\geq i+2$. Note that $J\in\bR^{\tilde{d}n \times \tilde{d}n}$ where $J_{ij}$ should be understood as $J_{ij}I_{\tilde{d}}$.
\end{lemma}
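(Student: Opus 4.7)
The plan is to prove the stronger identity $J_1 = J_2 J$, from which $J_2^{-1} J_1 = J$ follows because $J_2$ is upper triangular with diagonal entries $(J_2)_{ii} = h^{i-1} \neq 0$ for $h>0$, hence invertible. With the two factorisation matrices spelled out as $(J_2)_{ij} = \frac{h^{j-1}}{(j-i)!}$ for $j\geq i$ (and $0$ otherwise) and $J_{jk} = (-1)^{k-j}\frac{h^{k-j}}{(k-j)!}$ for $k\geq j$ (and $0$ otherwise), the verification reduces to computing the entries of the product $J_2 J$ and matching them to the diagonal $J_1$.

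I will split the verification by the relative positions of the indices $(i,k)$. First, if $k<i$, the factor $(J_2)_{ij}$ kills all $j<i$ and $J_{jk}$ kills all $j>k$, so no admissible $j$ remains and $(J_2 J)_{ik}=0=(J_1)_{ik}$. Next, if $k=i$, the only surviving index is $j=i$, giving $(J_2 J)_{ii} = (J_2)_{ii}\, J_{ii} = h^{i-1}\cdot 1 = (J_1)_{ii}$.

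The only nontrivial case is $k>i$, where I substitute the explicit formulas and pull out the common factor $h^{k-1}$ to obtain
\begin{equation*}
(J_2 J)_{ik} = \sum_{j=i}^{k} \frac{h^{j-1}}{(j-i)!}\cdot (-1)^{k-j}\frac{h^{k-j}}{(k-j)!}
 = \frac{h^{k-1}}{(k-i)!}\sum_{m=0}^{k-i}(-1)^{k-i-m}\binom{k-i}{m},
\end{equation*}
after the substitution $m=j-i$. The inner sum is the binomial expansion of $(1-1)^{k-i}$, which vanishes since $k>i$. Therefore $(J_2 J)_{ik}=0=(J_1)_{ik}$, completing the identification $J_2 J = J_1$. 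The three auxiliary statements ($J_{ii}=1$, $J_{i,i+1}=-h$, and the higher‑order behaviour of $J_{ij}$ for $j\geq i+2$) are then immediate from the closed‑form formula for $J_{ij}$.

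There is no real obstacle here: the entire argument is a direct computation whose only substantive step is recognising the alternating binomial identity $\sum_{m=0}^{N}(-1)^{N-m}\binom{N}{m}=0$ for $N\geq 1$. I would therefore present the proof compactly as the chain of equalities above, without needing any tools beyond the explicit matrix formulas already displayed in the paper.
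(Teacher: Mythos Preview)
Your proof is correct: the verification that $J_2 J = J_1$ via the binomial identity $\sum_{m=0}^{N}(-1)^{N-m}\binom{N}{m}=(1-1)^N=0$ for $N\geq 1$ is clean and complete, and invertibility of $J_2$ is properly justified by its nonzero diagonal entries $h^{i-1}$. The paper itself does not supply a proof of this lemma---it is quoted as Lemma 4.3 of \cite{DuongTran18}---so there is nothing to compare against; your direct computation is the natural argument.
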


For any $h>0$ define 
\begin{equation}\label{eq : definition of KAPPA}
    \mathcal{K}_h=h^{2n-2}(J_2^T\cM J_1)^{-1}.
\end{equation}

\begin{lemma}[Lemma 4.4 of \cite{DuongTran18}]
\label{lem:K}
For $\mathcal{K}_h$ defined in \eqref{eq : definition of KAPPA} we have 
\begin{equation}
(\mathcal{K}_h)_{ij}=(-1)^{n-j}\frac{h^{2n-i-j}}{(2n-i-j+1)!}.
\end{equation}
In particular, $(\mathcal{K}_h)_{nn}=1$ and $(\mathcal{K}_h)_{ij}=o(h)$ for all $(i,j)\neq (n,n)$. Note also that $\mathcal{K}_h\in \mathbb{R}^{\tilde{d}n\times \tilde{d}n}$ where $(\mathcal{K}_h)_{ij}$ should be understood as $(\mathcal{K}_h)_{ij}I_{\tilde{d}}$.
\end{lemma}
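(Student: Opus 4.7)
The plan is to compute $(\mathcal{K}_h)_{ij}=h^{2n-2}\big[(J_2^T\cM J_1)^{-1}\big]_{ij}$ by first reducing to a simpler inversion via the factorisation supplied by Lemma \ref{lem: InverH2H1}, and then either exploiting the explicit structure of $\cM=\cM_1\cM_2^{-1}$ to compute the reduced inverse, or directly verifying the proposed formula. From $J_2^{-1}J_1=J$ we obtain $J_1=J_2 J$, so
\begin{equation*}
J_2^T\cM J_1=(J_2^T\cM J_2)\,J,\qquad\text{hence}\qquad \mathcal{K}_h=h^{2n-2}\,J^{-1}\,(J_2^T\cM J_2)^{-1}.
\end{equation*}
The factor $J^{-1}$ is innocuous: recognising $J$ as the matrix representation of the Taylor shift $\exp(-hS)$ (with $S$ the super-diagonal with $1$'s), one obtains $(J^{-1})_{ij}=h^{j-i}/(j-i)!$ for $j\geq i$ and $0$ otherwise.

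The main content is therefore the closed form for $(J_2^T\cM J_2)^{-1}$. The cleanest route I would try is to verify the proposed formula directly, i.e.~to set $\tilde{\mathcal{K}}_{ij}:=(-1)^{n-j}h^{2n-i-j}/(2n-i-j+1)!$ and prove that
\begin{equation*}
\sum_{m}(J_2^T\cM J_1)_{km}\,\tilde{\mathcal{K}}_{m\ell}=h^{2n-2}\delta_{k\ell}.
\end{equation*}
Substituting the explicit entries (note $(J_1)_{jm}=h^{m-1}\delta_{jm}$ and $(J_2^T)_{ki}=h^{k-1}/(k-i)!$ for $k\geq i$) the identity reduces to a sum over $m$ of products of the coefficients $\cM_{im}$ with a factorial-weighted power of $h$. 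The required cancellations on the off-diagonal and the normalisation on the diagonal should then follow from the combinatorial identity encoding how $\cM=\cM_1\cM_2^{-1}$ acts on the ``Taylor basis'' underlying $J_2$ --- this is essentially a Vandermonde/Cauchy-type summation once the definitions of $\cM_1$ and $\cM_2$ are unwound, and is consistent with the Gaussian fundamental-solution interpretation in which $\mathcal{K}_h$ is (up to scaling) the covariance matrix of the integrated-Brownian tuple $(\xi_1,\ldots,\xi_n)$ at time $h$.

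Once the closed-form entry formula is established, the two particular assertions of the lemma are immediate by inspection. For $(i,j)=(n,n)$ we have $2n-i-j=0$ and $(-1)^{n-n}=1$, so $(\mathcal{K}_h)_{nn}=h^0/1!=1$. For any other $(i,j)$ with $1\leq i,j\leq n$ one has $2n-i-j\geq 1$, so $(\mathcal{K}_h)_{ij}$ carries a strictly positive power of $h$ and therefore vanishes as $h\to 0$, giving the smallness claim (with the sharper rate $O(h^{2n-i-j})$).

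The main obstacle I expect is the combinatorial identity in the verification step: the matrices $\cM_1,\cM_2$ are defined through factorial-binomial coefficients with a delicate sign pattern, and ensuring exact cancellation requires careful bookkeeping. A useful shortcut would be to leverage the auxiliary identities collected in Lemma \ref{prop: aux} (the vanishing of $T_2$ and the antisymmetry of $T_1,T_3$), which encode precisely the algebraic structure of $J_2^T\cM J_i$ that one needs; alternatively, one may bypass the combinatorics entirely by invoking the probabilistic interpretation of $J_2^T\cM J_1$ as (a multiple of) the cross-Hessian of the action $c_h$ of the linear Gaussian process \eqref{eq: SDE1}, whose inverse is directly expressible via the Itô isometry $\mathrm{Cov}(\xi_i(h),\xi_j(h))=\int_0^h(h-s)^{n-i}(h-s)^{n-j}/((n-i)!(n-j)!)\,ds$, matching the claimed formula up to the $h^{2n-2}$ rescaling.
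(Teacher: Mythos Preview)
The paper does not prove this lemma: it is quoted verbatim from \cite{DuongTran18} (as the label ``Lemma 4.4 of \cite{DuongTran18}'' indicates) and no argument is supplied here. There is therefore nothing in the present paper to compare your approach against.

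As a standalone attempt your plan is reasonable but incomplete. The factorisation $\mathcal{K}_h=h^{2n-2}J^{-1}(J_2^T\cM J_2)^{-1}$ via $J_1=J_2 J$ and the identification of $J^{-1}$ with the Taylor shift $\exp(hS)$ are correct. The direct-verification strategy --- checking that the matrix $\tilde{\mathcal{K}}$ with the stated entries is a right inverse of $h^{2-2n}J_2^T\cM J_1$ --- is the natural route, but you do not carry it out: you name it as ``the main obstacle'' and leave the identity unproved. That identity \emph{is} the content of the lemma; the hints about Vandermonde/Cauchy sums and about Lemma \ref{prop: aux} are not specific enough for a reader to reconstruct the cancellations.

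Your probabilistic shortcut does not land as stated. The covariance $\int_0^h \frac{(h-s)^{n-i}}{(n-i)!}\frac{(h-s)^{n-j}}{(n-j)!}\,ds=\frac{h^{2n-i-j+1}}{(n-i)!(n-j)!(2n-i-j+1)}$ is symmetric in $(i,j)$ and has a different factorial structure from the claimed $(-1)^{n-j}h^{2n-i-j}/(2n-i-j+1)!$, which is \emph{not} symmetric because of the sign $(-1)^{n-j}$. So $\mathcal{K}_h$ is not the covariance matrix of the integrated-Brownian tuple, and making the connection precise would require an additional signed change of basis --- i.e.\ exactly the combinatorics you hoped to bypass.

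A minor point on the ``particular assertions'': for $(i,j)=(n-1,n)$ or $(n,n-1)$ the exponent $2n-i-j$ equals $1$, so the entry is of exact order $h$, which is $O(h)$ but not $o(h)$ as the lemma literally states. Your derivation is correct; the discrepancy is with the stated lemma (in the application in Appendix \ref{section appendix degenerate} only $B_h=\mathcal{K}_h-A=O(h)$ is used, so this is presumably a typo for $O(h)$).
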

With the use of the preceding lemmas we can prove the convergence of the proposed entropic regularised scheme for the degenerate diffusion of Kolmogorov type, Proposition \ref{proposition Kolmogorov type}.
\begin{proof}[Proof of Proposition \ref{proposition Kolmogorov type}]
By Theorem \ref{theorem MAIN THEOREM} we just need to check Assumptions \ref{Assumption on potential and internal energy}, \ref{assumption on b and A}, \ref{assumption for the cost}, \ref{Assumption change of var for f and c - for regular.}, and \ref{assumption on scaling} hold.

The scaling Assumption \ref{assumption on scaling} and Assumption \ref{Assumption on potential and internal energy}  on the internal and potential energy clearly hold. Similarly, its clear that Assumption \ref{assumption on b and A} on $b,A$ is also satisfied. 

We now show the cost $c_h$  defined in \eqref{eq mean square derivative cost function} satisfies Assumption \ref{assumption for the cost}, with $b,A$ given by  \eqref{eq coef of degenerate diffusion} and $A+B_h =\mathcal{K}_h$ defined in \eqref{eq : definition of KAPPA}. Firstly  for \eqref{eq : cost realted to euclidean cost} we take the result directly from \cite[Lemma 2.3]{DuongTran18}. Moreover, one can see that since $\cM$ is constant and by definition of $c_h$ that \eqref{eq : cost upper bound} holds with $C(h)=h^{2-2n}$. From \cite[Lemma 2.2]{DuongTran17} we know that \eqref{eq : cost lower bound} holds. 

Note we can rewrite $\mathbf{b}$ as 
\begin{align*}
\mathbf{b}(h,\x,\y) &=\begin{pmatrix}y_{1}-x_{1}-\frac{h}{1}x_{2}-...-\frac{^{n-1}}{(n-1)!}x_{n}\\
\vdots\\
h^{i-1}\Big(y_{i}-\sum_{j=i}^{n}\frac{h^{j-i}}{(j-i)!}x_{j}\Big)\\
\vdots\\
h^{n-1}(y_{n}-x_{n})
\end{pmatrix}\\
 & =\begin{pmatrix}y_{1}\\
h y_{2}\\
h^{2}y_{3}\\
\vdots\\
h^{n-1}y_{n}
\end{pmatrix}-\begin{pmatrix}1 & h& \frac{h^{2}}{2!} & \frac{h^{3}}{3!} & \cdots & \frac{h^{n-1}}{(n-1)!}\\
 & h & h^{2} & \frac{h^{3}}{2!} & \cdots & \frac{h^{n-1}}{(n-2)!}\\
 &  & h^{2} & \frac{h^{3}}{1!} & \cdots & \frac{h^{n-1}}{(n-3)!}\\
 &  &  & \ddots & \cdots & \vdots\\
 &  &  &  &  & h^{n-1}
\end{pmatrix}\begin{pmatrix}x_{1}\\
x_{2}\\
x_{3}\\
\vdots\\
x_{n}
\end{pmatrix}
=J_{1}\y-J_{2}\x.
\end{align*}
Therefore, we have
\begin{align*}
c_h(\x,\y) & =h^{2-2n}[\y^{T}J_{1}^{T}-\x^{T}J_{2}^{T}]\,\cM \,[J_{1}\y-J_{2}\x]\nonumber\\
 & =h^{2-2n}\Big[\y^{T}J_{1}^{T}\,\cM\,J_{1}\y-\x^{T}J_{2}^{T}\,\cM\,J_{1}\y-\y^{T}J_{1}^{T}\,\cM\,J_{2}\x+\x^{T}J_{2}^{T}\cM J_{2}\x\Big]\nonumber\\
 & =h^{2-2n}\Big[\y^{T}J_{1}^{T}\,\cM\,J_{1}\y-2\x^{T}J_{2}^{T}\,\cM\,J_{1}\y+\x^{T}J_{2}^{T}\cM J_{2}\x\Big].
\end{align*}
Therefore,
\begin{equation*}
\nabla_{\y} c_h(\x,\y)=2h^{2-2n}J_1^T\cM(J_1\y-J_2\x),
\end{equation*}
so that \eqref{eq 93} holds with $C(h)=h^{2-2n}$. Hence we are left to prove \eqref{equation assumption for the cost}. 
Let $\eta\in \mathbb{R}^{\tilde{d}n}$. We choose $\tilde{\eta}\in\mathbb{R}^{\tilde{d}n}$ such that
\begin{equation*}
\begin{pmatrix}
\tilde{\eta}_1\\
\vdots\\
\tilde{\eta}_n
\end{pmatrix}=\mathcal{K}_h\begin{pmatrix}
 \eta_{1}\\
\vdots\\
\eta_{n}
\end{pmatrix} =\mathcal{K}_h  \eta,
\end{equation*}
where $\mathcal{K}_h$ is given in Lemma \ref{lem:K}, implying that $h^{2-2n}\mathcal{K}_h^T (J_1^TMJ_2)=I$.

Using Lemmas \ref{lem: InverH2H1} and \ref{lem:K}, we compute 
\begin{align*}
\Big \langle \nabla_{\y} c_{h}(\x,\y), \tilde{\eta} \Big \rangle
= \Big\langle \nabla_{\y}c_h(\x,\y), \mathcal{K}_h\eta \Big\rangle 
&
=2\Big[(J_2^{-1}J_1-I)\y\cdot \eta +(\y-\x)\cdot\eta \Big]
\\&=2(\y-\x)\cdot\eta-2h\sum_{i=2}^{n} y_{i}\cdot \eta_{i-1} +O(h^2)\|\y\|. 
\end{align*}
For Assumption \ref{Assumption change of var for f and c - for regular.}, define $\hat{\rx}$ as $\hat{\rx}_i:=\sum_{j=i}^n \frac{t^{j-i}}{(j-i)!}\rx_j$ for $i=1,\dots,n$, and consider the change of variable $\cT_h(\rx)=\hat{\rx}$. Assumption \ref{Assumption change of var for f and c - for regular.} holds with this change of variable and, indeed, one can easily check that 
\begin{align*}
         c_h(\rx,\mathcal{T}_h(\rx)+\sigma \rz)\leq& C h^{2-2n} \sigma^2 \|\rz\|^2,
        \qquad\textrm{and}\qquad
         |f(\cT_h(\rx)+\sigma \rz)-f(\rx)|\leq
         C \| \sigma z_n  \|.
    \end{align*}
\end{proof}
\section*{Acknowledgement} We would like to thank the anonymous referees for their useful suggestions for the improvement of the paper. D.A was supported by The Maxwell Institute Graduate School in Analysis and its Applications, a Centre for Doctoral Training funded by the UK Engineering and Physical Sciences Research Council (grant EP/L016508/01), the Scottish Funding Council, Heriot-Watt University and the University of Edinburgh. M. H. Duong was supported by EPSRC Grants EP/W008041/1 and EP/V038516/1.  G.d.R. acknowledges support from the \emph{Funda{\c c}$\tilde{\text{a}}$o para a Ci$\hat{e}$ncia e a Tecnologia} (Portuguese Foundation for Science and Technology) through the project UIDB/00297/2020 (Centro de Matem\'atica e Aplica\c c$\tilde{\text{o}}$es CMA/FCT/UNL).



\begin{thebibliography}{10}

\bibitem{adams2021operator}
D.~Adams, M.~H. Duong, and G.~d. Reis.
\newblock Operator-splitting schemes for degenerate conservative-dissipative
  systems.
\newblock {\em arXiv preprint arXiv:2105.11146}, 2021.

\bibitem{adams2013large}
S.~Adams, N.~Dirr, M.~Peletier, and J.~Zimmer.
\newblock Large deviations and gradient flows.
\newblock {\em Philosophical Transactions of the Royal Society A: Mathematical,
  Physical and Engineering Sciences}, 371(2005):20120341, 2013.

\bibitem{adams2011large}
S.~Adams, N.~Dirr, M.~A. Peletier, and J.~Zimmer.
\newblock From a large-deviations principle to the {W}asserstein gradient flow:
  a new micro-macro passage.
\newblock {\em Comm. Math. Phys.}, 307(3):791--815, 2011.

\bibitem{agueh2005existence}
M.~Agueh.
\newblock Existence of solutions to degenerate parabolic equations via the
  {M}onge-{K}antorovich theory.
\newblock {\em Adv. Differential Equations}, 10(3):309--360, 2005.

\bibitem{Agueh2016}
M.~Agueh.
\newblock Local existence of weak solutions to kinetic models of granular
  media.
\newblock {\em Archive for Rational Mechanics and Analysis}, 221(2):917--959,
  Aug 2016.

\bibitem{ambrosio2000functions}
L.~Ambrosio, N.~Fusco, and D.~Pallara.
\newblock {\em Functions of bounded variation and free discontinuity problems},
  volume 254.
\newblock Clarendon Press Oxford, 2000.

\bibitem{Ambrosio2008}
L.~Ambrosio and W.~Gangbo.
\newblock Hamiltonian odes in the wasserstein space of probability measures.
\newblock {\em Communications on Pure and Applied Mathematics}, 61(1):18--53,
  2008.

\bibitem{ambrosio2008gradient}
L.~Ambrosio, N.~Gigli, and G.~Savar{\'e}.
\newblock {\em Gradient flows: in metric spaces and in the space of probability
  measures}.
\newblock Springer Science \& Business Media, 2008.

\bibitem{BailoCarrilloMurakawaSchmidtchen2020}
R.~Bailo, J.~A. Carrillo, H.~Murakawa, and M.~Schmidtchen.
\newblock Convergence of a fully discrete and energy-dissipating finite-volume
  scheme for aggregation-diffusion equations.
\newblock {\em Mathematical Models and Methods in Applied Sciences},
  30(13):2487--2522, 2020.

\bibitem{balakrishnan2008elements}
V.~Balakrishnan.
\newblock {\em Elements of nonequilibrium statistical mechanics}, volume~3.
\newblock Springer, 2008.

\bibitem{bodineau2008large}
T.~Bodineau and R.~Lefevere.
\newblock Large deviations of lattice {H}amiltonian dynamics coupled to
  stochastic thermostats.
\newblock {\em J. Stat. Phys.}, 133(1):1--27, 2008.

\bibitem{bonafini2021domain}
M.~Bonafini and B.~Schmitzer.
\newblock Domain decomposition for entropy regularized optimal transport.
\newblock {\em Numerische Mathematik}, pages 1--52, 2021.

\bibitem{burger2012regularized}
M.~Burger, M.~Franek, and C.-B. Sch{\"o}nlieb.
\newblock Regularized regression and density estimation based on optimal
  transport.
\newblock {\em Applied Mathematics Research eXpress}, 2012(2):209--253, 2012.

\bibitem{Caluya2021}
K.~Caluya and A.~Halder.
\newblock Wasserstein proximal algorithms for the {S}chr\"{o}dinger bridge
  problem: Density control with nonlinear drift.
\newblock {\em IEEE Transactions on Automatic Control}, pages 1--1, 2021.

\bibitem{caluya2019gradient}
K.~F. Caluya and A.~Halder.
\newblock Gradient flow algorithms for density propagation in stochastic
  systems.
\newblock {\em IEEE Transactions on Automatic Control}, 65(10):3991--4004,
  2019.

\bibitem{carlen2004solution}
E.~A. Carlen and W.~Gangbo.
\newblock Solution of a model {B}oltzmann equation via steepest descent in the
  2-{W}asserstein metric.
\newblock {\em Arch. Ration. Mech. Anal.}, 172(1):21--64, 2004.

\bibitem{carlier2017convergence}
G.~Carlier, V.~Duval, G.~Peyr{\'e}, and B.~Schmitzer.
\newblock Convergence of entropic schemes for optimal transport and gradient
  flows.
\newblock {\em SIAM Journal on Mathematical Analysis}, 49(2):1385--1418, 2017.

\bibitem{carlier2017splitting}
G.~Carlier and M.~Laborde.
\newblock A splitting method for nonlinear diffusions with nonlocal,
  nonpotential drifts.
\newblock {\em Nonlinear Analysis: Theory, Methods \& Applications}, 150:1--18,
  2017.

\bibitem{carrillo2019blob}
J.~A. Carrillo, K.~Craig, and F.~S. Patacchini.
\newblock A blob method for diffusion.
\newblock {\em Calculus of Variations and Partial Differential Equations},
  58(2):1--53, 2019.

\bibitem{carrillo2003kinetic}
J.~A. Carrillo, R.~J. McCann, and C.~Villani.
\newblock Kinetic equilibration rates for granular media and related equations:
  entropy dissipation and mass transportation estimates.
\newblock {\em Revista Matematica Iberoamericana}, 19(3):971--1018, 2003.

\bibitem{Carrillo2006}
J.~A. Carrillo, R.~J. McCann, and C.~Villani.
\newblock Contractions in the 2-{W}asserstein length space and thermalization
  of granular media.
\newblock {\em Arch. Ration. Mech. Anal.}, 179(2):217--263, 2006.

\bibitem{carrillo2010numerical}
J.~A. Carrillo and J.~S. Moll.
\newblock Numerical simulation of diffusive and aggregation phenomena in
  nonlinear continuity equations by evolving diffeomorphisms.
\newblock {\em SIAM Journal on Scientific Computing}, 31(6):4305--4329, 2010.

\bibitem{ChavanisPRE2003}
P.-H. Chavanis.
\newblock Generalized thermodynamics and {F}okker-{P}lanck equations:
  Applications to stellar dynamics and two-dimensional turbulence.
\newblock {\em Phys. Rev. E}, 68:036108, Sep 2003.

\bibitem{chavanis2006nonlinear}
P.-H. Chavanis.
\newblock Nonlinear mean-field {F}okker--{P}lanck equations and their
  applications in physics, astrophysics and biology.
\newblock {\em Comptes Rendus Physique}, 7(3-4):318--330, 2006.

\bibitem{chavanis2004chapman}
P.-H. Chavanis, P.~Lauren\c{c}ot, and M.~Lemou.
\newblock Chapman-{E}nskog derivation of the generalized {S}moluchowski
  equation.
\newblock {\em Phys. A}, 341(1-4):145--164, 2004.

\bibitem{chizat2018scaling}
L.~Chizat, G.~Peyr{\'e}, B.~Schmitzer, and F.-X. Vialard.
\newblock Scaling algorithms for unbalanced optimal transport problems.
\newblock {\em Mathematics of Computation}, 87(314):2563--2609, 2018.

\bibitem{Chow2012}
S.-N. Chow, W.~Huang, Y.~Li, and H.~Zhou.
\newblock Fokker-{P}lanck equations for a free energy functional or {M}arkov
  process on a graph.
\newblock {\em Arch. Ration. Mech. Anal.}, 203(3):969--1008, 2012.

\bibitem{cuturi2013sinkhorn}
M.~Cuturi.
\newblock Sinkhorn distances: Lightspeed computation of optimal transport.
\newblock {\em Advances in neural information processing systems},
  26:2292--2300, 2013.

\bibitem{delarue2010density}
F.~Delarue and S.~Menozzi.
\newblock Density estimates for a random noise propagating through a chain of
  differential equations.
\newblock {\em Journal of functional analysis}, 259(6):1577--1630, 2010.

\bibitem{DiMario2020}
S.~Di~Marino and L.~Chizat.
\newblock A tumor growth model of {H}ele-{S}haw type as a gradient flow.
\newblock {\em ESAIM Control Optim. Calc. Var.}, 26:Paper No. 103, 38, 2020.

\bibitem{duong2013wasserstein}
M.~H. Duong, V.~Laschos, and M.~Renger.
\newblock Wasserstein gradient flows from large deviations of many-particle
  limits.
\newblock {\em ESAIM: Control, Optimisation and Calculus of Variations},
  19(4):1166--1188, 2013.

\bibitem{DuongLu2019}
M.~H. Duong and Y.~Lu.
\newblock An operator splitting scheme for the fractional kinetic
  {F}okker-{P}lanck equation.
\newblock {\em Discrete Contin. Dyn. Syst.}, 39(10):5707--5727, 2019.

\bibitem{duong2021nonreversible}
M.~H. Duong and M.~Ottobre.
\newblock Non-reversible processes: Generic, hypocoercivity and fluctuations,
  2021.

\bibitem{Duong2013}
M.~H. Duong, M.~A. Peletier, and J.~Zimmer.
\newblock G{ENERIC} formalism of a {V}lasov-{F}okker-{P}lanck equation and
  connection to large-deviation principles.
\newblock {\em Nonlinearity}, 26(11):2951--2971, 2013.

\bibitem{duong2014conservative}
M.~H. Duong, M.~A. Peletier, and J.~Zimmer.
\newblock Conservative-dissipative approximation schemes for a generalized
  {K}ramers equation.
\newblock {\em Math. Methods Appl. Sci.}, 37(16):2517--2540, 2014.

\bibitem{DuongTran17}
M.~H. Duong and H.~M. Tran.
\newblock Analysis of the mean squared derivative cost function.
\newblock {\em Mathematical Methods in the Applied Sciences},
  40(14):5222--5240, 2017.

\bibitem{DuongTran18}
M.~H. Duong and H.~M. Tran.
\newblock On the fundamental solution and a variational formulation for a
  degenerate diffusion of {K}olmogorov type.
\newblock {\em Discrete Contin. Dyn. Syst.}, 38(7):3407--3438, 2018.

\bibitem{erbar2015large}
M.~Erbar, J.~Maas, and D.~R.~M. Renger.
\newblock From large deviations to {W}asserstein gradient flows in multiple
  dimensions.
\newblock {\em Electron. Commun. Probab.}, 20:no. 89, 12, 2015.

\bibitem{figalli2011variational}
A.~Figalli, W.~Gangbo, and T.~Yolcu.
\newblock A variational method for a class of parabolic {PDE}s.
\newblock {\em Ann. Sc. Norm. Super. Pisa Cl. Sci. (5)}, 10(1):207--252, 2011.

\bibitem{Gianazza2009}
U.~Gianazza, G.~Savar\'{e}, and G.~Toscani.
\newblock The {W}asserstein gradient flow of the {F}isher information and the
  quantum drift-diffusion equation.
\newblock {\em Arch. Ration. Mech. Anal.}, 194(1):133--220, 2009.

\bibitem{Huang00}
C.~Huang.
\newblock A variational principle for the {K}ramers equation with unbounded
  external forces.
\newblock {\em J. Math. Anal. Appl.}, 250(1):333--367, 2000.

\bibitem{HuangJordan2000}
C.~Huang and R.~Jordan.
\newblock Variational formulations for {V}lasov-{P}oisson-{F}okker-{P}lanck
  systems.
\newblock {\em Math. Methods Appl. Sci.}, 23(9):803--843, 2000.

\bibitem{jordan1998variational}
R.~Jordan, D.~Kinderlehrer, and F.~Otto.
\newblock The variational formulation of the {F}okker-{P}lanck equation.
\newblock {\em SIAM J. Math. Anal.}, 29(1):1--17, 1998.

\bibitem{JungeMatthesOsberger2017}
O.~Junge, D.~Matthes, and H.~Osberger.
\newblock A fully discrete variational scheme for solving nonlinear
  fokker--planck equations in multiple space dimensions.
\newblock {\em SIAM Journal on Numerical Analysis}, 55(1):419--443, 2017.

\bibitem{KinderlehrerTudorascu2006}
D.~Kinderlehrer and A.~Tudorascu.
\newblock Transport via mass transportation.
\newblock {\em Discrete and Continuous Dynamical Systems - B}, 6(2):311--338,
  2006.

\bibitem{KnoppSinkhorn1967}
P.~Knopp and R.~Sinkhorn.
\newblock {Concerning nonnegative matrices and doubly stochastic matrices.}
\newblock {\em Pacific Journal of Mathematics}, 21(2):343 -- 348, 1967.

\bibitem{Kraaij2020}
R.~C. Kraaij, A.~Lazarescu, C.~Maes, and M.~Peletier.
\newblock Fluctuation symmetry leads to generic equations with non-quadratic
  dissipation.
\newblock {\em Stochastic Processes and their Applications}, 130(1):139--170,
  2020.

\bibitem{kramers1940brownian}
H.~A. Kramers.
\newblock Brownian motion in a field of force and the diffusion model of
  chemical reactions.
\newblock {\em Physica}, 7:284--304, 1940.

\bibitem{lisini2009nonlinear}
S.~Lisini.
\newblock Nonlinear diffusion equations with variable coefficients as gradient
  flows in {W}asserstein spaces.
\newblock {\em ESAIM Control Optim. Calc. Var.}, 15(3):712--740, 2009.

\bibitem{Maas2011}
J.~Maas.
\newblock Gradient flows of the entropy for finite {M}arkov chains.
\newblock {\em J. Funct. Anal.}, 261(8):2250--2292, 2011.

\bibitem{marcos2020solutions}
A.~Marcos and A.~Soglo.
\newblock Solutions of a class of degenerate kinetic equations using steepest
  descent in {W}asserstein space.
\newblock {\em J. Math.}, pages Art. ID 7489532, 30, 2020.

\bibitem{Matthes2009}
D.~Matthes, R.~J. McCann, and G.~Savaré.
\newblock A family of nonlinear fourth order equations of gradient flow type.
\newblock {\em Communications in Partial Differential Equations},
  34(11):1352--1397, 2009.

\bibitem{MatthesOsberger2014}
D.~Matthes and H.~Osberger.
\newblock Convergence of a variational lagrangian scheme for a nonlinear drift
  diffusion equation.
\newblock {\em ESAIM: M2AN}, 48(3):697--726, 2014.

\bibitem{matthes2020discretization}
D.~Matthes and B.~S{\"o}llner.
\newblock Discretization of flux-limited gradient flows: $\gamma$-convergence
  and numerical schemes.
\newblock {\em Mathematics of Computation}, 89(323):1027--1057, 2020.

\bibitem{Mielke2013}
A.~Mielke.
\newblock Geodesic convexity of the relative entropy in reversible {M}arkov
  chains.
\newblock {\em Calc. Var. Partial Differential Equations}, 48(1-2):1--31, 2013.

\bibitem{mielke2014relation}
A.~Mielke, M.~A. Peletier, and D.~R.~M. Renger.
\newblock On the relation between gradient flows and the large-deviation
  principle, with applications to {M}arkov chains and diffusion.
\newblock {\em Potential Anal.}, 41(4):1293--1327, 2014.

\bibitem{Oettinger05}
H.~C. {\"{O}}ttinger.
\newblock {\em Beyond equilibrium thermodynamics}.
\newblock Wiley-Interscience, 1st edition, 2005.

\bibitem{otto2001geometry}
F.~Otto.
\newblock The geometry of dissipative evolution equations: the porous medium
  equation.
\newblock {\em Comm. Partial Differential Equations}, 26(1-2):101--174, 2001.

\bibitem{ottobre2011asymptotic}
M.~Ottobre and G.~A. Pavliotis.
\newblock Asymptotic analysis for the generalized {L}angevin equation.
\newblock {\em Nonlinearity}, 24(5):1629--1653, 2011.

\bibitem{peletier2020jump}
M.~A. Peletier, R.~Rossi, G.~Savar{\'e}, and O.~Tse.
\newblock Jump processes as generalized gradient flows.
\newblock {\em Calculus of Variations and Partial Differential Equations},
  61(1):33, 2022.

\bibitem{peyre2015entropic}
G.~Peyr\'{e}.
\newblock Entropic approximation of {W}asserstein gradient flows.
\newblock {\em SIAM J. Imaging Sci.}, 8(4):2323--2351, 2015.

\bibitem{peyre2019computational}
G.~Peyr{\'e}, M.~Cuturi, et~al.
\newblock Computational optimal transport: With applications to data science.
\newblock {\em Foundations and Trends{\textregistered} in Machine Learning},
  11(5-6):355--607, 2019.

\bibitem{Ris84}
H.~Risken.
\newblock {\em The {F}okker-{P}lanck equation}, volume~18 of {\em Springer
  Series in Synergetics}.
\newblock Springer-Verlag, Berlin, 1984.
\newblock Methods of solution and applications.

\bibitem{risken1989fokker}
H.~Risken.
\newblock The {F}okker-{P}lanck equation, 1989.
\newblock Methods of solution and applications.

\bibitem{rossi2003tightness}
R.~Rossi and G.~Savar{\'e}.
\newblock Tightness, integral equicontinuity and compactness for evolution
  problems in banach spaces.
\newblock {\em Annali della Scuola Normale Superiore di Pisa-Classe di
  Scienze}, 2(2):395--431, 2003.

\bibitem{schmitzer2019stabilized}
B.~Schmitzer.
\newblock Stabilized sparse scaling algorithms for entropy regularized
  transport problems.
\newblock {\em SIAM Journal on Scientific Computing}, 41(3):A1443--A1481, 2019.

\bibitem{villani2008optimal}
C.~Villani.
\newblock {\em Optimal transport: old and new}, volume 338.
\newblock Springer Science \& Business Media, 2008.

\bibitem{Zhang2007}
X.~Zhang.
\newblock Variational approximation for {F}okker-{P}lanck equation on
  {R}iemannian manifold.
\newblock {\em Probab. Theory Related Fields}, 137(3-4):519--539, 2007.

\end{thebibliography}
\bibliographystyle{abbrv}

\end{document}